\newtheorem{thm}{Theorem}[section] \newtheorem*{theorem*}{Theorem}
\newtheorem{lemma}[thm]{Lemma} \newtheorem*{lemma*}{Lemma}
\newtheorem{corollary}[thm]{Corollary} \newtheorem*{corollary*}{Corollary}
\newtheorem{prop}[thm]{Proposition} \newtheorem*{proposition*}{Proposition}
\newtheorem{definition}[thm]{Definition}
\newtheorem*{definition*}{Definition}
\newtheorem{rmk}[thm]{Remark}
\newcommand{\Z}{\mathbb Z}
\newcommand{\N}{\mathbb N}
\newcommand{\C}{\mathbb C}
\newcommand{\R}{\mathbb R}
\newcommand{\E}{\mathbb E}
\newcommand\cB{{\mathcal B}}
\newcommand\fund{{\mathcal F}}
\newcommand\cH{{\mathcal H}}
\newcommand\cL{{\mathcal L}}
\newcommand\cR{{\mathcal R}}
\newcommand\cW{{\mathcal W}}
\newcommand\cP{{\mathcal P}}
\newcommand\bC{{\mathbb C}}
\newcommand\bS{{\mathbb S}}
\newcommand\bZ{{\mathbb Z}}
\newcommand\eps{\varepsilon}
\newcommand\vf{\varphi}
\DeclareMathOperator{\hol}{hol}
\DeclareMathOperator{\Deck}{Deck}
\newcommand\nb{{\mathbf n}}
\newcommand\mb{{\mathbf m}}
\def\1{\mathbb 1}
\newcommand*\diff{\mathop{}\!\mathrm{d}}
\def \Leb {\boldsymbol{m}}
\def \Xcomp {X}
\def \Xcover {X_{\Gamma}}
\def \psicomp {\psi}
\def \psicover {\psi_\Gamma}
\def \vcover {v_\Gamma}
\def \wcover {w_\Gamma}
\def \Gcover {G_\Gamma}
\def \Lcomp{\cL}
\def \Lcover {\cL_\Gamma}
\def \coverproj {p}
\def \coverhomo {\zeta}
\def \deck {\Delta}
\def \Ess {\operatorname{Ess}}
\def \KK {K}
\newcommand{\kpol}{\k}
\title{On asymptotic expansions of ergodic integrals for $\Z^d$-extensions of translation flows.
\thanks{This research was the result of the 2023 Research-in-Teams project 0223
``Limit Theorems for Parabolic Dynamical Systems''
at the Erwin Schr\"odinger Institute, Vienna}
}
\author{Henk Bruin\thanks{Faculty of Mathematics, University of Vienna,
		Oskar Morgensternplatz 1, 1090 Vienna, Austria; {\it henk.bruin@univie.ac.at} },
Charles Fougeron\thanks{LAGA - Universit\'e Sorbonne Paris Nord, 99 Av. Jean Baptiste Cl\'ement, 93430 Villetaneuse, France;
{\it charles.fougeron@math.cnrs.fr} },
Davide Ravotti\thanks{Universit\'e de Lille, CNRS, UMR 8524 - Laboratoire Paul Painlev\'e, F-59000 Lille, France; {\it davide.ravotti@univ-lille.fr} },
Dalia Terhesiu\thanks{Institute of Mathematics, University of Leiden,
 	Einsteinweg 55 2333 CC Leiden, The Netherlands,
 		{\it daliaterhesiu@gmail.com}}
}
\date{\today}
\begin{document}
\maketitle

\begin{abstract}
We obtain expansions of ergodic integrals for $\Z^d$-covers of compact self-similar translation flows, and as a consequence we obtain a form of weak rational ergodicity with optimal rates.
As examples, we consider the so-called self-similar $(s,1)$-staircase flows ($\Z$-extensions of self-similar translations flows of genus-$2$ surfaces),
and particular cases of the Ehrenfest wind-tree model.
\\[3mm] {\it 2000 Mathematics Subject Classification.} Primary 37E35, Secondary 28D05, 37H05, 14H30, 60F15
\\[1mm] {\it Keywords and phrases.} translation flow, $\Z$-extension, wind-tree model, weak rational ergodicity
%
%
\end{abstract}

\listoffixmes

\section{Introduction}\label{sec:intro}

Given a measure preserving flow $(\phi_t)_{t \in \R}$ on a measure space $(\Xcomp, \mu)$, one is interested in describing the almost everywhere behaviour of its orbits. If the flow is ergodic and if $\mu(X) < \infty$, Birkhoff's Ergodic Theorem states that, for any integrable observable $G \colon \Xcomp \to \R$, the time averages $\frac{1}{T} \int_0^T G \circ \phi_t \diff t$ converge almost everywhere to $\frac{1}{\mu(X)} \int_X G \diff \mu$. On the other hand, if $\mu(X) = \infty$, for any ergodic, conservative flow $(\phi_t)_{t \in \R}$, its time averages
for integrable functions converge to $0$ almost everywhere.
The situation does not improve even if we replace $\frac{1}{T}$ with any other normalizing family of functions $a(T)$, see Aaronson \cite[Theorem 2.4.2]{Aaronson}: for any non-negative integrable function $G$, either $\liminf_{T \to \infty} \frac{1}{a(T)} \int_0^T G \circ \phi_t \diff t = 0$ or  $\limsup_{T \to \infty}  \frac{1}{a(T)} \int_0^T G \circ \phi_t \diff t = \infty$ almost everywhere.

However, one can still hope to describe the almost everywhere behaviour of the ergodic integrals in some weaker sense. In particular, for an integrable function $G \colon X \to \R$, we seek an expression of the form
\begin{equation}\label{eq:general_int}
\int_0^T G \circ \phi_t(x) \diff t  = a(T) \left( \int_X G \diff \mu \right) \cdot \Phi_T(x) (1 + o(1)),
\end{equation}
where $a(T)$ describes the \lq\lq correct (almost everywhere) size\rq\rq\ of the ergodic averages (which, at least for us, is $o(T)$) and $\Phi_T(x)$ is an \lq\lq oscillating\rq\rq\ term which, although not convergent almost everywhere, converges in some weaker sense (and, crucially, depends only on the point $x$, not on the function $G$).

In this paper, we consider a translation flow $(\phi_t)_{t \in \R}$ on a space $\Xcover$
which is a $\Z^d$-cover of a compact translation surface $\Xcomp$ with  projection $p: \Xcover \to \Xcomp$.
The Lebesgue measure $\Leb$ is infinite on $\Xcover$ and invariant w.r.t.\ both the flow $\phi_t$ and the deck-transformations associated to the cover.
Our main result is that, under certain assumptions described below, an expression as \eqref{eq:general_int} holds for all continuous functions $G \colon \Xcover \to \R$ with compact support, with $a(T) \sim T (\log T)^{-d/2}$ and where $\sqrt{\log(\phi_T \circ p)}$ converges in distribution to a Gaussian random variable.

Results of this type have been proved by many authors in several settings, including \cite{LS} for $\Z^d$ covers of horocycle flows, and \cite{ADDS} for $\Z$-covers of a translation torus.
Furthermore, in \cite{ADDS}, the authors used this result to prove temporal limit theorems for circle rotations $R_\theta : \bS^1 \to \bS^1$
for observables with $\int_{\bS^1} G \, \diff\Leb = 0$ and specific (namely, quadratically irrational) rotation angles $\theta$.
This amounts to determining the asymptotics of $\sum_{i=0}^{n-1} G \circ R_\theta^i(x)$
for a fixed $x$ and increasing time intervals $[0,n]$.
The crucial idea in their proofs is renormalization, allowing one to speed up a translation flow
$\phi_t$ on a $\Z$-cover $\Xcover$ of a two-dimensional twice punctured torus $\Xcomp$ in the contracting direction of a linear pseudo-Anosov lift\footnote{By this we mean the lift of a pseudo-Anosov automorphism; as $\Xcover$ is not compact, and may have singularities with an infinite or ill-defined cone angle, $\Xcover$ may not carry proper pseudo-Anosov automorphisms.} $\psicover$ of $\Xcover$
according to
\begin{equation}\label{eq:renorm}
\psicover \circ \phi_t = \phi_{\lambda t} \circ \psicover \quad \text{for every } t \in \R,
\end{equation}
where $\lambda \in (0,1)$ is the contraction factor of $\psicover$.
Therefore the asymptotics of ergodic integrals $\int_0^T G \circ \phi_t \, \diff t$
for compactly supported observables $G:\Xcover \to \R$ can be estimated using the asymptotics of $\int_{\fund} G \circ \psi^k \diff\Leb$,
where $\fund$ is a fundamental domain and $T \approx \lambda^{-k}$.

The central result in \cite{ADDS} in our notation is
\begin{equation}\label{eq:adds}
\int_0^T G \circ \phi_t(x) \, \diff t =
\left(\int_{\Xcover} G \, \diff\Leb  \right)
\ \frac{ (1+o(1)) T}{ \sigma \sqrt{2K} }
\exp\left( -\frac{1+o(1)}{2\sigma^2} \frac{\left( \xi \circ \psicover^{\KK}(x) \right)^2}{K} \right)
\quad \text{ as } T \to \infty,
\end{equation}
where $K \sim \log^* T := \lceil -\frac{\log T}{\log \lambda} \rceil$ and $x$ is such that it has zero average drift under iteration of $\psicover$,
and $\xi:\Xcover \to \Z$ is the projection on the $\Z$-part of the cover.

In this paper, we extend these results to (i) include higher-order error terms of
the asymptotics, making the $o(1)$ terms in \eqref{eq:adds} explicit, and (ii) allow more general translation surfaces than tori. For instance, we cover a particular case of Ehrenfest's wind-tree model.
Our proofs continue to rely on the renormalization formula \eqref{eq:renorm}, hence restricting the direction of the translation flow to quadratically irrational slopes, but are on the whole simpler than those of \cite{ADDS}, and pertain to $\Z^d$-covers as well.

Ergodicity of the flow seems to be a non-generic property;
there are several results in the literature showing that for
$\Z$- or $\Z^d$-extensions of many compact translation surfaces, the translation flow in a generic direction is non-ergodic, and even has uncountably many ergodic components, cf.\ \cite{RT12,RT17,BU18,FU14,FU23}.
The landmark result of Fr\kpol{a}czek and Ulcigrai \cite{FU14} proves the existence of uncountably many ergodic components for the square wind-tree model and Lebesgue a.e.\ direction.

In contrast, it was proved in \cite{H} for wind-tree models with square obstacles, and more generally for rational rectangles, that in a dense set of directions (of Hausdorff dimension more than half), the billiard flow is ergodic.

Our result applies in particular to wind-tree models that have two finite-horizon directions, where the corresponding cylinders have commensurable moduli.
It was proved in \cite{HLT} that this is the case for most rational-length obstacles of the rectangular wind-tree model — and the result of \cite{H} mentioned before is a consequence of this property.
We generalize ergodicity results to wind-trees of different shapes and to $\Z^d$-covers for higher $d$, which has been little studied to our knowledge. In addition, we provide finer ergodicity results in these cases by describing the asymptotic behavior of Birkhoff integrals.

Phrased in dimension $d=1$ (but see \Cref{thm:f} for the precise formulation for $d=1$ and $d=2$), our main result reads as follows:

\begin{thm}\label{thm:main1}
Let $G \in C^1(\Xcover)$ be compactly supported.
Then, there exist real bounded functions $g_{k,j}$ so that for all $N \ge 1$ and $\Leb$-a.e.\ $x \in \Xcover$,
\begin{align*}
\int_0^{T} G \circ \phi_t(x)\, \diff t &=\frac{\int_{\Xcover} G\, \diff\Leb}{\sigma \sqrt{2\pi}}
\cdot e^{-\frac{\xi\left(\psicover^{\KK}(x)\right)^2}{ 2 \sigma^2 \KK} } \, \frac{T}{\sqrt{\KK}}  \\
&\quad \times \left( 1 + \, \sum_{k=1}^N \frac{1}{K^k}  \sum_{j=0}^{2k}
 g_{k,j}(x) \, \xi(\psicover^{K}(x))^{2k-j} + \ O\left(\frac{1}{K^{N+1}} \right)
\right)
\end{align*}
as $T \to \infty$ and $K = \log^* T = \lceil -\frac{\log T}{\log \lambda} \rceil$.
\end{thm}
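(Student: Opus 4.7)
The strategy is to convert the large-time asymptotics of the ergodic integral into an Edgeworth-type expansion for a local limit theorem governing the $\Z^d$-valued deck cocycle of $\psicover$. First, iterating \eqref{eq:renorm} $K$ times yields
\[
\int_0^T G\circ\phi_t(x)\,dt=\lambda^{-K}\int_0^{\lambda^K T}\bigl(G\circ\psicover^{-K}\bigr)\circ\phi_s\bigl(\psicover^K x\bigr)\,ds,
\]
and the choice $K=\lceil -\log T/\log\lambda\rceil$ makes the outer time $\lambda^K T$ bounded while $\lambda^{-K}\asymp T$ furnishes the $T$ factor in the statement. The remaining task is to analyse this fixed-length integral as $K\to\infty$.

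Next, I would Fourier-decompose the integrand along the deck group. Using the characters $\gamma\mapsto e^{2\pi i\langle\alpha,\gamma\rangle}$ of $\Z^d$ and pushing down to the base $\Xcomp$ via $\coverproj$, the sum over deck translates arising from the compact support of $G\circ\psicover^{-K}$ becomes a Fourier integral over $\bT^d$ of iterates of the twisted transfer operator, schematically
\[
\int_{\bT^d} e^{-2\pi i\langle\alpha,\xi(\psicover^K x)\rangle}\bigl(\twistedtransfer^{K}h_\alpha\bigr)\bigl(\coverproj(\psicover^K x)\bigr)\,d\alpha,
\]
where $h_\alpha$ is built from $G$ and from the short orbit segment of length $\lambda^K T$. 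Under quasi-compactness of $\twistedtransfer$ and the standard non-degeneracy/aperiodicity hypotheses on the cocycle, for small $\alpha$ there is a spectral splitting $\twistedtransfer^K=\Lambda(\alpha)^K\Pi_\alpha+R_\alpha^K$ with exponentially small remainder, leading eigenvalue $\Lambda(\alpha)=1-\tfrac{\sigma^2}{2}\alpha^2+O(|\alpha|^3)$, and no peripheral eigenvalues elsewhere on $\bT^d$, so the Fourier integral concentrates in a neighbourhood of $\alpha=0$.

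Finally, I would localise to $|\alpha|\lesssim K^{-1/2}$, rescale $\alpha=\beta/\sqrt K$, and Taylor-expand both $\Lambda(\beta/\sqrt K)^K$ and $\Pi_{\beta/\sqrt K}$ to order $2N+2$. The result is a Gaussian integral in $\beta$ against polynomials in $\beta$ and $\xi(\psicover^K x)/\sqrt K$; since only even Gaussian moments survive, regrouping terms of equal order in $K^{-1}$ produces exactly the expansion in the statement, with the coefficients $g_{k,j}(x)$ read off from the Taylor coefficients of $\Lambda$ and $\Pi_\alpha$ together with Hermite moments. The main obstacle is the uniform control of this Edgeworth expansion to arbitrary $N$: it requires $C^{2N+2}$-smoothness of $\alpha\mapsto\Lambda(\alpha),\Pi_\alpha$ on a Banach space where $\twistedtransfer$ is quasi-compact, non-degeneracy $\sigma^2>0$ of the cocycle, and a careful treatment of the short orbit segment so that the $C^1$-regularity of $G$ absorbs the stable-direction oscillations of $G\circ\psicover^{-K}$ generated by the pseudo-Anosov — which is precisely where the $C^1$ hypothesis in the theorem is consumed.
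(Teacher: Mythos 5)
Your outline follows essentially the same route as the paper: renormalize by $\psicover^K$, pass to the $\Z^d$-Fourier dual and a twisted transfer operator $\Lcomp_u$ (this is precisely \Cref{lem:rel}, and your phrasing is also close to the alternative formulation of \Cref{sec:otherapporach}), use quasi-compactness to split off the leading eigenvalue $\lambda_u$, rescale $u=\beta/\sqrt K$, and Taylor-expand. All of that is sound and matches \Cref{prop:op}, \Cref{prop:opllt} and the proof of \Cref{thm:f}.

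There is, however, a genuine gap in the last step, and it concerns the assertion that the coefficients $g_{k,j}$ are \emph{real}. You argue that ``only even Gaussian moments survive'', so the expansion regroups into real terms. This is not correct: the relevant moment integrals are of the form
\[
I_j(\sigma,L)=\int_{\R} e^{iL u-\frac{\sigma^2}{2}u^2}u^{j}\,du,
\qquad L=\frac{\xi(\psicover^K x)}{\sqrt K},
\]
and for $L\neq 0$ (the typical case, since $\xi(\psicover^K x)\asymp \sqrt K$) the odd moments do not vanish; instead $I_j=i^{j\bmod 2}\cdot(\text{real polynomial in }L)\cdot e^{-L^2/2\sigma^2}$, see \Cref{lem:integrals}. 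The oscillatory factor $e^{iLu}$ is a genuine complex phase, not a real shift, so parity of the Gaussian alone does not kill the imaginary parts. What actually makes the $g_{k,j}$ real is a nontrivial parity property of the spectral data: for a \emph{real} input $v$, the derivative $\Pi_0^{(j)}v$ of the spectral projection is real when $j$ is even and purely imaginary when $j$ is odd (\Cref{prop:opllt}(a)); the analogous parity holds for the cumulant coefficients $H_j$. This is proved in the paper by an induction on contour integrals of resolvent products (\Cref{lem:G}), and it is exactly what pairs with the factor $i^{j\bmod 2}$ in $I_j$ to yield real $g_{k,j}$. Without this argument, your expansion produces the right powers of $\xi(\psicover^K x)$ and $K^{-1}$ but gives no reason why the resulting coefficients are real, which is part of the statement you set out to prove. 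A secondary, more minor point: in the Banach spaces $\cB\subset\cB_w$ one cannot evaluate $\twistedtransfer^K h_\alpha$ at the single point $\coverproj(\psicover^K x)$; the short orbit segment of length $\lambda^K T$ must be used to define a bounded linear functional on $\cB_w$ (cf.\ \Cref{lem:A_vs_AR} and the construction of $A_K$ around \eqref{eq:obs}). Your phrasing ``built from $G$ and the short orbit segment'' gestures at this, but the precise mechanism is what allows the $O(\lambda^K)=O(1/T)$ error and should be made explicit.
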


The term $\frac{\xi(\psicover^{\KK}(x))^2}{2\sigma^2 \KK}$ is oscillating and does not converge almost everywhere, but after integration over the space,
it does lead to a form of weak rational ergodicity for $C^1$ observables with optimal rates, see \Cref{thm:rateswrd}.
Weak rational ergodicity~\cite{A13} means that there is a set $\fund \subset \Xcover$ of positive finite measure (possibly but not necessarily a fundamental domain of the $\Z^d$-cover) such that
$$
\lim_{T\to\infty} \frac{1}{a_T(\fund)} \int_0^T \mu(A \cap \phi_t(B) ) \, \diff t = \mu(A) \mu(B)
$$
for all measurable sets $A,B \subset \fund$, and
$a_T(\fund) := \int_0^T \mu(\fund \cap \phi_t(\fund) ) \, dt$
is called the {\em return sequence}.

The paper is organized as follows.
In~\Cref{sec:covers}
we formalize the concept of $\Z^d$-cover over a translation surface
and study the automorphisms that commute with deck-transformations.
We discuss the example of the $(s,1)$-staircase at length, which is the direct generalization of the model used in \cite{ADDS} (where $s = 2$).
We give direct proofs of ergodicity of the pseudo-Anosov lift
$\psicover$ and the translation flow $\phi_t$ although this also follows
from the results of~\Cref{sec:limitlaws}.
\Cref{sec:covers} finishes with a version of the Ehrenfest wind-tree model,
as well as an example of the classical Ehrfest wind-tree model with $\frac12 \times \frac12$ squares as obstacles;
which is our examples of a $\Z^2$-cover where the main theory applies.
\Cref{sec:limitlaws} gives the core of the argument in an abstract setup, based on local limit laws of twisted transfer operators $\Lcomp_u$ acting on appropriate
 anisotropic Banach spaces.
Finally, in the Appendix we review the tensor calculus we are using, and prove some technical lemma.
\\[4mm]
{\bf Acknowledgements:} The authors would like to thank the Erwin Schr\"odinger Institute
where this paper was initiated during a “Research in Teams” project in 2023.
We are also grateful to the anonymous referee for the suggestions that helped us to seriously strengthen our paper.

\section{Abelian covers and homogeneous automorphisms}\label{sec:covers}

A translation surface $\Xcomp$ is a connected topological surface with an atlas of charts $\phi_i:U_i \to \Xcomp \setminus \Sigma$, $U_i \subset \R^2$ open and connected, such that each $\phi_j^{-1} \circ \phi_i$ is a translation from
$U_i \cap \phi_i^{-1} \circ \phi_j(U_j)$ to its image.
Here $\Sigma$ is a discrete set of conical singularities, i.e., their cone angles are an integer multiple of $2\pi$ (but not $2\pi$ itself) and potentially marked point, see \Cref{sec:homological}.
See \cite{DHV} for an extensive monograph on translation surfaces, from which we have just paraphrased Definition 1.1.6.

A standard example is a right-angled polygon with pairwise identified sided via translations, see~\Cref{fig:31staircase} (left).
%
%
%

A $\Z^d$-cover $\Xcover$ of a translation surface is a new (infinite) translation surface with a continuous projection $p:\Xcover \to \Xcomp$
such that the quotient space $\Xcover/\Xcomp \simeq \Z^d$.
Any region $\fund$ such that copies by deck-transformations
$\{ \Delta_{\nb} \}_{\nb \in \Z^d}$ form a partition of $\Xcover$
is called a {\em fundamental domain}.
We can depict $\Xcover$ as an infinite polygon (the countable union of copies of $\Xcomp$) in $\R^2$, but with sides identified
in a different way from $\Xcomp$, see~\Cref{fig:31staircase}; a collection $\Gamma$ of $d$ independent primitive loops determines how these identifications are done, see~\Cref{fig:31staircase} (right).

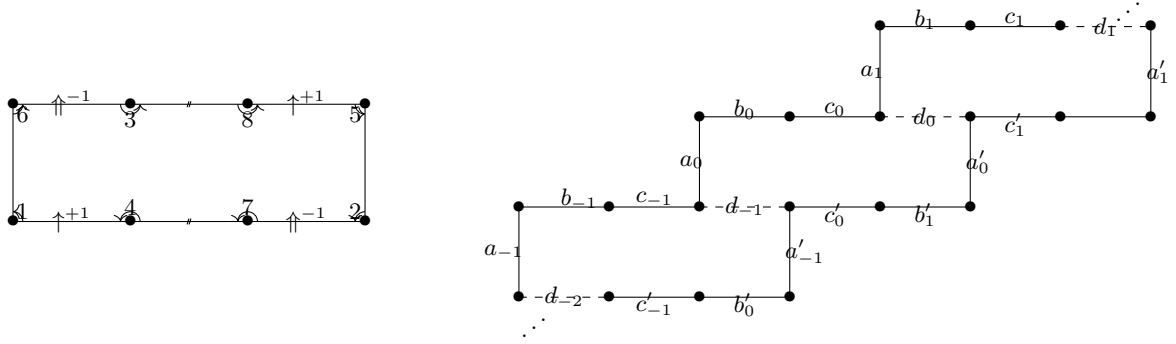
\begin{figure}[ht]
\begin{center}
\begin{minipage}[l]{0.4\textwidth}
\begin{tikzpicture}[scale=0.13]
\draw[-] (-12,0) -- (24,0) -- (24,12) -- (-12,12) -- (-12,0);
\node at (-12,0) {$\bullet$}; \draw[->] (-11, 0) arc (0:90:1); \node at (-11, 1) {\small $1$};
\node at (0,0) {$\bullet$}; \draw[->] (1,0) arc (0:180:1); \node at (0, 1.5) {\small $4$};
\node at (12,0) {$\bullet$}; \draw[->] (13,0) arc (0:180:1); \node at (12, 1.5) {\small $7$};
\node at (24,0) {$\bullet$}; \draw[->] (24, 1) arc (90:180:1); \node at (23, 1) {\small $2$};
\node at (-12,12) {$\bullet$}; \draw[->] (-12,11) arc (270:360:1); \node at (-11, 11) {\small $6$};
\node at (0,12) {$\bullet$}; \draw[->] (-1,12) arc (180:360:1); \node at (0, 10.5) {\small $3$};
\node at (12,12) {$\bullet$}; \draw[->] (11,12) arc (180:360:1); \node at (12, 10.5) {\small $8$};
\node at (24,12) {$\bullet$}; \draw[->] (23,12) arc (180:270:1); \node at (23, 11) {\small $5$};
\draw[-] (5.8,11.7) -- (6.0,12.3);\draw[-] (5.8,-0.3) -- (6,0.3);
  \node at (-6,0.1) {\small $\uparrow^{+1}$}; \node at (18,12.1) {\small $\uparrow^{+1}$};
\draw[-] (6,11.7) -- (6.2,12.3);\draw[-] (6,-0.3) -- (6.2,0.3);
  \node at (-6,12.1) {\small $\Uparrow^{-1}$};
   \node at (18,0.1) {\small $\Uparrow^{-1}$};
\end{tikzpicture}
\end{minipage}
\begin{minipage}[r]{0.55\textwidth}
\begin{tikzpicture}[scale=0.1]
\draw[-] (0,0) -- (24,0) -- (24,12);
\draw[-] (12,12) -- (-12,12) -- (-12,0);
\draw[-, dashed] (-12,0) -- (0,0);
\draw[-, dashed] (12,12) -- (24,12);
\draw[-] (24,12) -- (48,12) -- (48,24);
\draw[-] (12,12) -- (12,24) -- (36,24);
\draw[-] (-24,-12) -- (0,-12) -- (0,0);
\draw[-] (-12,0) -- (-36,0) -- (-36,-12);
\draw[-, dashed] (-36,-12) -- (-24,-12);
\draw[-, dashed] (36,24) -- (48,24);
 \node at (-13, 6) {\small $a_0$};
 \node at (25, 6) {\small $a'_0$};
  \node at (11, 18) {\small $a_1$};
  \node at (49, 18) {\small $a'_1$};
   \node at (-38, -6) {\small $a_{-1}$};
   \node at (2, -6) {\small $a'_{-1}$};
 \node at (-6, 13) {\small $b_0$};
 \node at (-6, -13) {\small $b'_0$};
  \node at (18, 25) {\small $b_1$};
  \node at (18, -1) {\small $b'_1$};
   \node at (-28, 1) {\small $b_{-1}$};
\node at (6, 13) {\small $c_0$};
 \node at (6, -1) {\small $c'_0$};
  \node at (30, 25) {\small $c_1$};
  \node at (30, 11) {\small $c'_1$};
   \node at (-18, 1) {\small $c_{-1}$};
   \node at (-18, -13) {\small $c'_{-1}$};
\node at (18, 12) {\small $d_0$};
  \node at (42, 24) {\small $d_1$};
   \node at (-6, 0) {\small $d_{-1}$};
   \node at (-30, -12) {\small $d_{-2}$};
  \node at (45, 27) {\small \scalebox{-1}[1]{$\ddots$}};
   \node at (-34, -15) {\small \scalebox{-1}[1]{$\ddots$}};
\node at (-12,0) {$\bullet$}; 
\node at (0,0) {$\bullet$}; 
\node at (12,0) {$\bullet$}; 
\node at (24,0) {$\bullet$}; 
\node at (-12,12) {$\bullet$}; 
\node at (0,12) {$\bullet$}; 
\node at (12,12) {$\bullet$}; 
\node at (24,12) {$\bullet$}; 
\node at (12,24) {$\bullet$};
\node at (24,24) {$\bullet$};
\node at (36,24) {$\bullet$};
\node at (48,24) {$\bullet$};
\node at (36,12) {$\bullet$};
\node at (48,12) {$\bullet$};
\node at (0,-12) {$\bullet$};
\node at (-12,-12) {$\bullet$};
\node at (-24,-12) {$\bullet$};
\node at (-36,-12) {$\bullet$};
\node at (-36,0) {$\bullet$};
\node at (-24,0) {$\bullet$};
\end{tikzpicture}
\end{minipage}
\caption{The $(3,1)$-rectangle (left) and the $(3,1)$-staircase (right).
On the left, $\Sigma$ consists of a single point with cone angle $6\pi$; it splits into countably many singularities on the staircase, all with cone angle $6\pi$.}
\label{fig:31staircase}
\end{center}
\end{figure}

A pseudo-Anosov diffeomorphism $\psicomp:\Xcomp \to \Xcomp$ is
a bijection of $\Xcomp$ that is a diffeomorphism on $\Xcomp \setminus \Sigma$, and admits a continuous splitting of the tangent bundle $T(\Xcomp \setminus \Sigma)$ into a stable foliation $\{ E^s(x) \}_{x \in \Xcomp \setminus \Sigma}$ and an unstable foliation $\{ E^u(x) \}_{x \in \Xcomp \setminus \Sigma}$ such that for all $x \in \Xcomp \setminus \Sigma$:
\begin{itemize}
 \item the angle $\angle(E^s(x) , E^u(x)) \geq \alpha$;
 \item $D\psicomp(x)(E^s(x)) = E^s(\psicomp(x))$ and
 $\| D\psicomp^n(x) v \| \leq C e^{-cn} \| v \|$ for all $v \in E^s(x)$;
 \item $D\psicomp^{-1}(x)(E^u(x)) = E^u(\psicomp^{-1}(x))$ and
 $\| D\psicomp^{-n}(x) v \| \leq C e^{-cn} \| v \|$ for all $v \in E^u(x)$.
\end{itemize}
Here the constants $C, c, \alpha > 0$ are independent of $x \in \Xcomp \setminus \Sigma$.
The set $\Sigma$ is discrete, and consists of common endpoints of so-called {\em prongs} of the
stable and unstable foliations.
In our case, $\Sigma$ coincides with the set of conical singularities of the translation surface.

The lift $\psicover$ of a pseudo-Anosov diffeomorphism $\psicomp$
is a bijection of the $\Z^d$-cover $\Xcover$ such that
$\psicomp \circ p = p \circ \psicover$.
It isn't fully a pseudo-Anosov diffeomorphism in its own right,
partially because the singularities of $\Xcover$ can be more complicated
than conical singularities, e.g., they could have infinite cone angle, or no proper two-dimensional neighbourhood.
However, $\psicover$ has enough hyperbolicity for the purpose in this paper.


\begin{figure}[ht]
\begin{center}
\begin{minipage}[l]{0.45\textwidth}
\begin{tikzpicture}[scale=0.04]
\filldraw[-, pink]  (0,0) -- (42,84) -- (84,182)  -- (42,98) -- (0,0);
\draw[-, red]  (0,0) -- (42,84) -- (84,182)  -- (42,98) -- (0,0);
\draw[-] (0,0) -- (42,0) -- (42,14) -- (0,14) -- (0,0);
\draw[-] (21,14.7) -- (21.2,14.3);\draw[-] (21,-0.3) -- (21.2,0.3);
\draw[-, dashed] (28,0) -- (28,14);
\draw[-, dashed] (14,0) -- (14,14);
\draw[-] (28,14) -- (70,14) -- (70,28) -- (28,28) -- (28,14);
\draw[-, dashed] (56,14) -- (56,28);
\draw[-, dashed] (42,14) -- (42,28);
\draw[-] (-28,-14) -- (14,-14) -- (14,0) -- (-28,0) -- (-28,-14);
\draw[-, dashed] (0,-14) -- (0,0);
\draw[-, dashed] (-14,-12) -- (-14,0);
\draw[-, blue]  (18,0) -- (24,14);
\filldraw[-, cyan]  (0,-14) -- (7,-14) -- (14,0) -- (28,0) -- (42,28) -- (35,28) -- (28,14) -- (14,14) -- (0,-14);
\draw[-, blue]  (0,-14) -- (14,14);
\draw[-, blue]  (6,-14) -- (18,14);
\draw[-, blue]  (7,-14) -- (21,14);
\draw[-, blue]  (21,0) -- (35,28);
\draw[-, blue]  (24,0) -- (36,28);
\draw[-, blue]  (28,0) -- (42,28);
\filldraw[-, cyan]  (35,0) -- (42,14) -- (42,0) -- (35,0);
\draw[-, blue]  (35,0) -- (42,14) -- (36,0) -- (35,0);
\draw[-] (0,0) -- (42,0) -- (42,14) -- (0,14) -- (0,0);
\draw[-] (21,14.7) -- (21.2,14.3);\draw[-] (21,-0.3) -- (21.2,0.3);
\draw[-, dashed] (28,0) -- (28,14);
\draw[-, dashed] (14,0) -- (14,14);
\draw[-] (28,14) -- (70,14) -- (70,28) -- (28,28) -- (28,14);
\draw[-, dashed] (56,14) -- (56,28);
\draw[-, dashed] (42,14) -- (42,28);
\draw[-] (-28,-14) -- (14,-14) -- (14,0) -- (-28,0) -- (-28,-14);
\draw[-, dashed] (0,-14) -- (0,0);
\draw[-, dashed] (-14,-12) -- (-14,0);
\end{tikzpicture}
\end{minipage}
\begin{minipage}[l]{0.45\textwidth}
\begin{tikzpicture}[scale=0.07]
\filldraw[-, cyan]  (0,0) -- (0,14) -- (7,14) -- (0,0);
\draw[-, blue]  (0,0) -- (6,14) -- (7,14) -- (0,0);
\filldraw[-, cyan]  (0,-14) -- (7,-14) -- (14,0) -- (28,0) -- (42,28) -- (35,28) -- (28,14) -- (14,14) -- (0,-14);
\draw[-, blue]  (0,-14) -- (14,14);
\draw[-, blue]  (6,-14) -- (18,14);
\draw[-, blue]  (7,-14) -- (21,14);
\draw[-, blue]  (18,0) -- (24,14);
\draw[-, blue]  (21,0) -- (35,28);
\draw[-, blue]  (24,0) -- (36,28);
\draw[-, blue]  (28,0) -- (42,28);
\filldraw[-, cyan]  (35,0) -- (42,14) -- (42,0) -- (35,0);
\draw[-, blue]  (35,0) -- (42,14) -- (36,0) -- (35,0);
\draw[-] (0,0) -- (42,0) -- (42,14) -- (0,14) -- (0,0);
   \node at (35,0.1) {\small $\uparrow^{+1}$};
   \node at (35,28.1) {\small $\uparrow^{+1}$};
\draw[-] (21,14.7) -- (21.2,14.3);\draw[-] (21,-0.3) -- (21.2,0.3);
   \node at (7,14.1) {\small $\Uparrow^{-1}$};
    \node at (7, -13.9) {\small $\Uparrow^{-1}$};
    \node at (0, 0) {\small $\bullet$};
\draw[-, dashed] (28,0) -- (28,14);
\draw[-, dashed] (14,0) -- (14,14);
\draw[-] (28,14) -- (70,14) -- (70,28) -- (28,28) -- (28,14);
\draw[-, dashed] (56,14) -- (56,28);
\draw[-, dashed] (42,14) -- (42,28);
\draw[-] (-28,-14) -- (14,-14) -- (14,0) -- (-28,0) -- (-28,-14);
\draw[-, dashed] (0,-14) -- (0,0);
\draw[-, dashed] (-14,-12) -- (-14,0);
\end{tikzpicture}
\end{minipage}

\caption{The $0$th step and its image under $\psicover$ with matrix $\binom{1 \ 3}{2 \ 7}$.}\label{fig:7}
\end{center}
\end{figure}

\subsection{Homological properties of $\Z^d$-covers}\label{sec:homological}

Let $\Xcomp$ be a compact translation surface of genus $g \geq 1$, and let $\Sigma \subset \Xcomp$ be the finite set of singularities and marked points of $\Xcomp$, whose cardinality we denote by $\kappa \geq 1$.
The relative homology $H_1(\Xcomp, \Sigma, \Z)$ is a free abelian group of rank $2g+\kappa-1$.
The intersection form
\[
\langle \cdot, \cdot \rangle \colon H_1(\Xcomp, \Sigma, \Z) \times H_1(\Xcomp \setminus \Sigma, \Z) \to \Z
\]
is non-degenerate.
Let us consider a homomorphism $\tilde\zeta : \pi_1(\Xcomp \setminus \Sigma) \to \Z^d$.
Using the action of the fundamental group $\pi_1$ on the universal cover of $\Xcomp$, denoted by $\widetilde \Xcomp$, one can associate to the kernel of $\tilde\zeta$ a $\Z^d$-cover $\ker \tilde \zeta \setminus \widetilde \Xcomp$.
Notice that, as $\Z^d$ is abelian, one can factor $\tilde\zeta$ by a morphism
\[
	\coverhomo : H_1(\Xcomp \setminus \Sigma, \Z) \to \Z^d.
\]
The projection of $\coverhomo$ on each coordinate of $\Z^d$ in the canonical basis defines linear forms on $H_1(\Xcomp \setminus \Sigma, \Z)$.
As the intersection form is non-degenerate, there exists a collection of independent primitive loops $\Gamma = \left\{\gamma_1, \dots, \gamma_d\right\} \subset H_1(\Xcomp, \Sigma, \Z)$ such that
\[
	\coverhomo(\gamma) = \left( \left\langle\gamma_1, \gamma\right\rangle, \dots, \left\langle\gamma_d, \gamma\right\rangle \right).
\]
Conversely, such a collection defines a $\Z^d$-cover $\Xcover$ of $\Xcomp$ with projection $\coverproj : \Xcover \to \Xcomp$.
We denote its group of deck transformation by $\Deck$, which is isomorphic to $\Z^d$.
Thus we can label each element of $\Deck$ by $\deck_{\nb}$ with $\nb \in \Z^d$.\\

As the intersection form is non-degenerate, there exists smooth 1-forms $\omega_i$ on $\Xcomp$ that vanish on $\Sigma$ such that for all $\gamma \in H_1(\Xcomp \setminus \Sigma, \Z)$,
\[
	\int_\gamma \omega_i = \left\langle \gamma_i, \gamma \right\rangle.
\]
Notice that the $\omega_i$s form a free family of vectors in $H^1(\Xcomp, \Sigma, \Z)$.
We denote by $\cH(\Z)$ the $\Z$-module of $H^1(\Xcomp, \Sigma, \Z)$ generated by these vectors and $\cH(\R)$ the corresponding real sub-bundle of $H^1(\Xcomp, \Sigma, \R)$.
Then the quotient $\cH(\R) / \cH(\Z)$ is isomorphic to the $d$-dimensional torus $\mathbb T^d$.\\

Let $\hol : H_1(\Xcomp, \Sigma, \Z) \to \C$ denote the holonomy map.
As observed in \cite{HW}, it is a necessary condition for the linear flow to be recurrent in almost every direction on the cover to have
\[
\hol(\gamma_i) = 0 \qquad \text{ for all $i=1,\dots, d$.}
\]
This is commonly called a {\em no drift condition} and we assume it for the covers we consider.
For $\Z$-covers, by \cite[Proposition 10]{HW}, the no drift condition as defined here is equivalent to the corresponding condition in (H3), namely that $\int_{\Xcomp} F \diff m = 0$. \\

\begin{prop}\label{prop:def_xi}
Let $x_0 \in \Xcover$ be fixed. For any $\omega \in \cH(\R)$ and for all $x \in \Xcover$, the integral
\[
\xi_\omega(x) = \int_x^{x_0} \omega \circ  \coverproj
\]
does not depend on the path chosen and is hence a well-defined smooth function on $\Xcover$.
\end{prop}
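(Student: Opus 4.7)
The plan is to establish path-independence by verifying that the line integral of the pullback $\coverproj^*\omega := \omega\circ\coverproj$ vanishes along every closed loop in $\Xcover$; smoothness of $\xi_\omega$ then follows automatically by reading off primitives on simply connected neighbourhoods of $\Xcover$.

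First I would note that each $\omega_i$ is represented by a smooth closed $1$-form on $\Xcomp$ (vanishing on $\Sigma$), so the same holds for any $\omega \in \cH(\R)$ by linearity. Since $\coverproj : \Xcover \to \Xcomp$ is a covering map and hence a local diffeomorphism, $\coverproj^*\omega$ is a smooth closed $1$-form on $\Xcover$. For any piecewise smooth closed loop $\tilde\gamma$ in $\Xcover$, the projection $\gamma := \coverproj\circ\tilde\gamma$ is a closed loop in $\Xcomp$. Because $\tilde\gamma$ closes up in the $\Z^d$-cover $\Xcover$ (which by construction is associated with $\ker\coverhomo$), the class of $\gamma$ lies in $\ker\coverhomo$, that is, $\langle\gamma_i,[\gamma]\rangle = 0$ for all $i=1,\dots,d$.

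By naturality of line integrals under pullback and the decomposition $\omega = \sum_{i=1}^d c_i\,\omega_i$,
\[
\oint_{\tilde\gamma} \coverproj^*\omega \;=\; \int_\gamma \omega \;=\; \sum_{i=1}^d c_i\int_\gamma\omega_i \;=\; \sum_{i=1}^d c_i\,\langle\gamma_i,[\gamma]\rangle \;=\; 0,
\]
where the penultimate equality uses the defining property of the $\omega_i$'s. Hence $\xi_\omega(x)$ does not depend on the path chosen from $x$ to $x_0$, and since locally $\xi_\omega$ is a primitive of the smooth form $\coverproj^*\omega$, it is smooth on $\Xcover$.

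The only mildly delicate point is that the identity $\int_\gamma\omega_i = \langle\gamma_i,\gamma\rangle$ is stated for cycles $\gamma \in H_1(\Xcomp\setminus\Sigma,\Z)$, whereas a priori the projected loop may pass through $\Sigma$. This is not a genuine obstacle: one may perturb $\tilde\gamma$ within its homotopy class to avoid the discrete set $\coverproj^{-1}(\Sigma)$, and the integral is unchanged because $\coverproj^*\omega$ is closed; alternatively, the vanishing of $\omega$ on $\Sigma$ makes the boundary contributions inconsequential even without the perturbation.
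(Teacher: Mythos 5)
Your proof is correct and follows essentially the same route as the paper's: both reduce path-independence to vanishing of $\int_{\tilde\gamma}\coverproj^*\omega$ on loops, observe that the projected class lies in $\ker\coverhomo$, and conclude using the duality between the $\omega_i$'s and the intersection pairing with the $\gamma_i$'s. Your extra remark about avoiding $\Sigma$ is a reasonable technical clarification that the paper leaves implicit.
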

\begin{proof}
It suffices to show that for any loop $\gamma$ in the cover $\Xcover$, we have
$\int_{[\gamma]} \coverproj^{\ast} \omega = 0$.
By definition of the cover, $\coverproj_{\ast} [\gamma] = [p \circ \gamma] \in \ker \coverhomo$, so that
\[
\int_{[\gamma]} \coverproj^{\ast} \omega = \int_{\coverproj_{\ast} [\gamma]} \omega =0,
\]
which proves the claim.
\end{proof}

Let $\fund \subset \Xcover$ be a fundamental domain for the cover, namely a compact connected subset of
$\Xcover$, whose boundary $\partial \fund$ has measure zero, such that the restriction of the projection $p \colon \Xcover \to \Xcomp$ to the interior of $\fund$ is injective and $p(\fund) = \Xcomp$.

Note that 
$|\xi_\omega(x)-\xi_\omega(y)| \leq 1$ for any $x,y \in \fund$, and equality can hold only if $x,y \in \partial \fund$.

The orbit of $\fund$ under deck transformations tessellates $\Xcover$, namely, for all $\mb, \nb \in \Z^d$,
\[
	\Leb[\deck_{\mb}(\fund) \cap \deck_{\nb}(\fund)] = \delta_{\mb \, \nb}, \qquad \text{and} \qquad \bigcup_{\mathbf n \in \Z^d} \deck_{\mathbf n}(\fund) = \Xcover.
\]
For a given fundamental domain, one can associate a $\Z^d$-coordinate function from $\Xcover \setminus \bigcup_{\nb \in \Z^d}\partial \fund$ to $\Z^d$, which associates to a point $x \in \Xcover$ the unique $\nb \in \Z^d$ such that $\deck_{-\nb}(\fund) \in \fund$.\\

The following proposition shows that the functions $\xi_{\omega_i}$ form a smooth version of $\Z^d$-coordinates.
Its proof is left to the reader.
\begin{prop}\label{lem:coordinate}
	Let $x_0 \in \Xcover$ and $\omega \in \cH(\R)$, they induce a simply connected fundamental domain
	\[\mathcal F = \{x : |\xi_{\omega_i}(x)| \le 1 \text{ for }  1 \le i \le d\}\]
	and for any $x \in \Xcover$ such that $x$ is not in the orbit by the deck transformation of  $\partial \fund$,
\[
	x \in \deck_\nb(\mathcal{F}) \qquad \text{if and only if} \qquad \lfloor \xi_{\omega_i} (x) \rfloor = n_i \qquad \text{ for all $i=1\dots, d$}.
\]
\end{prop}

An automorphism of $\Xcover$ is called \textit{homogeneous} if it commutes with all deck transformations.
For such a homogeneous automorphism $\psicover$,  we can associate its \textit{Frobenius} function
\[
	F(x) := \xi(\psicover(x')) - \xi(x')
\]
where $x' \in p^{-1}(x)$ and $\xi$ is a $\Z^d$-coordinate for the cover.
This is well defined since for all $\nb \in \Z^d$, $\psicover \circ \deck_\nb = \deck_\nb \circ \psicover$ and $\xi \circ \deck_\nb = \xi + \nb$.

We then can define the \textit{average drift} of such an automorphism by
\[
	\delta(\psicover) := \int_{\Xcomp} F\, d\Leb
\]
where $\Leb$ is normalized Lebesgue measure on the surface.
It is independent of the choice of $\Z^d$-coordinates and if $\phi_\Gamma$ is another homogeneous automorphism, we have
\begin{equation}\label{drift_compat}
	\delta(\psicover \circ \phi_\Gamma) = \delta(\psicover) + \delta(\phi_\Gamma),
\end{equation}
see \cite[Lemma 2.2]{ADDS} for details.

\subsection{Homogeneous pseudo-Anosov lifts}\label{sec:pseudo-Anosov}

The next proposition determines when we can lift an automorphism on the base to an automorphism on the $\Z^d$-cover.

\begin{prop}\label{lem:psi_lift}
Let $\psicomp : \Xcomp \to \Xcomp$ be a linear automorphism which preserves $\Sigma$ and $\psicomp_*$ its induced map on $H_1(X \setminus \Sigma, \Z)$.
If $\zeta \circ \psicomp_* = \zeta$ then $\psi$ can be lifted to $\Xcover$ and its lift is homogeneous.
\end{prop}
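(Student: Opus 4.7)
The plan is to split the proof into two parts: first verifying that $\psicomp$ admits a lift to $\Xcover$ at all, and then showing that any such lift automatically commutes with every deck transformation. The key observation behind the second part is that conjugation by a lift $\psicover$ induces a group automorphism $\sigma$ of the deck group $\Deck\cong\Z^d$ which, via monodromy, coincides with the map induced by $\psicomp_*$ on the quotient $H_1(\Xcomp\setminus\Sigma,\Z)/\ker\coverhomo\cong\Z^d$; the hypothesis $\coverhomo\circ\psicomp_*=\coverhomo$ then forces $\sigma=\mathrm{id}$.

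For the existence of the lift, recall that, as a cover of $\Xcomp\setminus\Sigma$, the space $\Xcover$ corresponds to the normal subgroup $\ker\tilde\coverhomo\leq\pi_1(\Xcomp\setminus\Sigma)$, where $\tilde\coverhomo=\coverhomo\circ q$ and $q$ is the Hurewicz map. Since $\psicomp_*$ on $H_1$ is the map induced by its $\pi_1$-counterpart through $q$, the hypothesis $\coverhomo\circ\psicomp_*=\coverhomo$ promotes to $\tilde\coverhomo\circ\psicomp_*^{\pi_1}=\tilde\coverhomo$. In particular $\psicomp_*^{\pi_1}$ preserves $\ker\tilde\coverhomo$, and the standard covering-space lifting criterion produces a map $\psicover:\Xcover\to\Xcover$ with $\coverproj\circ\psicover=\psicomp\circ\coverproj$, unique up to composition with an element of $\Deck$. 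The assumption that $\psicomp$ preserves $\Sigma$ ensures that $\psicover$ extends continuously to the ramification fibers.

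For homogeneity, fix such a lift $\psicover$ and $\nb\in\Z^d$. The map $\psicover\circ\deck_\nb\circ\psicover^{-1}$ projects under $\coverproj$ to the identity on $\Xcomp$, hence is itself a deck transformation $\deck_{\sigma(\nb)}$ for a well-defined group automorphism $\sigma$ of $\Z^d$; note that replacing $\psicover$ by $\deck_\mb\circ\psicover$ does not change $\sigma$ because $\Z^d$ is abelian. To compute $\sigma$, pick $x\in\Xcover$ and a loop $\gamma$ in $\Xcomp\setminus\Sigma$ based at $\coverproj(x)$. By the monodromy description of $\Deck$, the lift of $\gamma$ starting at $x$ terminates at $\deck_{\coverhomo([\gamma])}(x)$. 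Applying $\psicover$ to this lifted path produces a lift of the loop $\psicomp\circ\gamma$ starting at $\psicover(x)$, whose endpoint can be read in two ways: as $\psicover\bigl(\deck_{\coverhomo([\gamma])}(x)\bigr)$ and as $\deck_{\coverhomo(\psicomp_*[\gamma])}\bigl(\psicover(x)\bigr)$. Equating them and using $\coverhomo\circ\psicomp_*=\coverhomo$ yields
\[
\psicover\circ\deck_{\coverhomo([\gamma])}=\deck_{\coverhomo([\gamma])}\circ\psicover.
\]
Since $\coverhomo$ is surjective onto $\Z^d$ by the construction from the collection $\Gamma$, the elements $\coverhomo([\gamma])$ exhaust $\Z^d$ as $\gamma$ varies, so $\sigma=\mathrm{id}$ and $\psicover$ commutes with every $\deck_\nb$.

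The only delicate point I anticipate is the usual basepoint bookkeeping at the $\pi_1$ level, since $\psicomp$ need not fix a basepoint and $\psicomp_*^{\pi_1}$ is only defined up to inner automorphism. This subtlety dissolves upon passing to $H_1$, where $\psicomp_*$ is canonical and the monodromy isomorphism $\Deck\cong H_1(\Xcomp\setminus\Sigma,\Z)/\ker\coverhomo$ is basepoint-independent, so that the entire argument is controlled by the single hypothesis $\coverhomo\circ\psicomp_*=\coverhomo$.
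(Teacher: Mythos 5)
Your proof is correct and follows essentially the same route as the paper's: you establish the lift via the covering-space lifting criterion using preservation of $\ker\tilde\coverhomo$, then deduce homogeneity from the monodromy description of $\Deck$ together with $\coverhomo\circ\psicomp_*=\coverhomo$. The paper states this more tersely (it does not spell out the conjugation automorphism $\sigma$ or the basepoint bookkeeping), but the underlying argument is identical; the paper also notes in passing that the lift is linear because the flat structure is pulled back, which you omit but which is not needed for the stated claim as written.
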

\begin{proof}
	Recall that, by definition, automorphisms are induced on the universal cover by acting on the set of paths up to homotopy.
	The condition of the proposition then implies that this induced map can be factored through a quotient by $\ker \widetilde{\coverhomo}$, leading to the creation of a lifted automorphism on the cover.

	Since the cover inherits its flat structure from the pulled-back structure on $\Xcomp$, the lift is also linear.

	Moreover, not only does $\psicomp_*$ preserve the kernel of $\coverhomo$, but it also preserves all homology classes modulo the kernel of $\coverhomo$.
	As every deck transformation is determined by the action of an element $\gamma$ in $\pi_1(\Xcomp \setminus \Sigma) \bmod \ker \widetilde{\coverhomo}$,
	one simply has to notice that the condition implies $\psicomp (\gamma) \equiv \gamma \bmod \ker \widetilde{\coverhomo}$.
\end{proof}

In a translation surface, a closed orbit in a given direction is contained in a cylinder formed by a union of closed orbits and whose boundary is a union of saddle connections in the surface.
The length of those orbits is called the \emph{width} of the cylinder and the distance across the cylinder in the orthogonal direction if called the \emph{height}.
The modulus of a cylinder if given by the ratio of width over height.

A direction of a translation surface is called \emph{completely periodic}\footnote{The name comes from periodicity of the Teichm\"uller flow in this direction.} if it can be decomposed as a union of cylinders in that direction and those cylinders have commensurable moduli (i.e., their pairwise ratios are rational).
Notice that the actual period length of cylinders may be different and even not commensurable.

One feature of commensurable cylinder is that they have a common parabolic matrix acting trivially on them by a Dehn twist.

\begin{figure}[ht]
\begin{center}
\begin{tikzpicture}[scale=1.]
  \begin{scope}[xshift=4cm,yshift=1cm]
    \draw[postaction={decorate},decoration={ markings, mark=at position .5 with {\arrow{latex}}, } ] (0,0) -- (0,1);
    \draw[postaction={decorate},decoration={ markings, mark=at position .5 with {\arrow{latex}}, }] (3,0) -- (3,1);
    \draw (0,0) -- (3,0);
    \draw (0,1) -- (3,1);
    \node[label=right:$h$] at (3,.5) {};
    \node[label=below:$w$] at (1.5,0) {};

    \draw[->] (4,.5) -- (6,.5);
    \node[label=above:{$\begin{pmatrix} 1 & \frac{w}{h}\\ 0 & 1 \end{pmatrix}$}] at (5,.5) {};
  \end{scope}
\end{tikzpicture}
\begin{tikzpicture}[scale=1.]
\end{tikzpicture}
\begin{tikzpicture}[scale=1.]
  \begin{scope}[xshift=4cm,yshift=1cm]
    \draw[postaction={decorate},decoration={ markings, mark=at position .5 with {\arrow{latex}}, } ] (0,0) -- (3,1);
    \draw[postaction={decorate},decoration={ markings, mark=at position .5 with {\arrow{latex}}, }] (3,0) -- (6,1);
    \draw (0,0) -- (3,0);
    \draw (3,1) -- (6,1);
    \draw[->] (4.5,1) to [bend right] (1.5,1);
    \draw[dashed] (3,0) -- (3,1);
    \node[label={[rotate=+90]below:\ding{36}}] at (2.75,-.1) {};
  \end{scope}
\end{tikzpicture}
\caption{Dehn twist on an horizontal cylinder}
\label{fig:Dehn}
\end{center}
\end{figure}
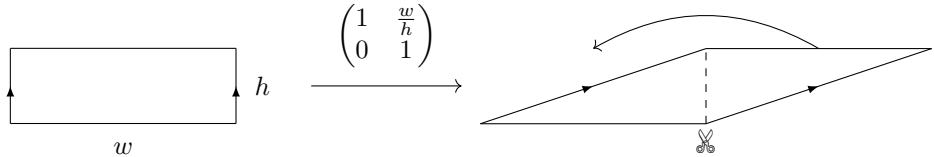

We apply the classical Thurston--Veech construction of pseudo-Anosov by product of two transverse Dehn twists.
We refer to \cite{lanneau} for an nice survey introduction of this construction or \cite[Section 3.4]{H} for an extensive study with generalization to infinite translation surfaces.
\begin{prop}
	\label{prop:twist}
	Consider a completely periodic direction of a translation surfaces $\Xcomp$ and the associated decomposition in a finite family of cylinders; let $\mu_1, \dots, \mu_n$ be their moduli and let $\eta_1, \dots, \eta_n$ be the homology classes of their core curves.
	Let $k_1, \dots, k_n \in \Z$ such that $k_1 \cdot \mu_1 = \dots = k_n \cdot \mu_n$.\\
	There is a linear parabolic automorphism $\psi$ on $\Xcomp$, which acts on homology by the map defined for all $\gamma \in H_1(\Xcomp \setminus \Sigma,~\R)$ by
	\begin{equation}
		\label{twist_action}
		\psi(\gamma) = \gamma + k_1 \cdot \langle \eta_1, \gamma \rangle \cdot \eta_1 + \dots + k_n \cdot \langle \eta_n, \gamma \rangle \cdot \eta_n.
	\end{equation}
\end{prop}

\begin{proof}
	Assume that the flow is in the horizontal direction.
	For a given cylinder let us denote by $w$, $h$ and $\mu = w/h$ its width, height and modulus.
	The action of the parabolic matrix $\binom{1 \ \mu}{0 \ 1}$
	defines an automorphism on this cylinder which topologically acts as a Dehn twist, meaning that it adds the homology class of the core curve of the cylinder each time a representing loop crosses the core curve positively and subtracts when it crosses it negatively.\\

	By assumption, the moduli $\mu_1, \dots, \mu_n$ of the cylinders 
	decomposing the surface are commensurable.
	Thus there exist integers $k_1, \dots, k_n$ such that
	the parabolic matrix
	\[
	\begin{pmatrix}
		1 & \mu_1\\
		0 & 1
	\end{pmatrix}
	^{k_1}
	= \ \dots \ =
	\begin{pmatrix}
		1 & \mu_n\\
		0 & 1
	\end{pmatrix}
	^{k_n}
	\]
	defines an automorphism on the whole surface.
\end{proof}

In this work, we will be interested in products of Dehn twists in transverse directions in order to produce lifted linear automorphisms which associated matrix is hyperbolic \textit{i.e.} has trace larger than 2.
Such automorphisms are extensively studied on compact translation surfaces and are called linear \textit{pseudo-Anosov} maps.
They preserve two transverse foliations in their contracting and expanding directions.

\subsection{Staircases}\label{sec:staircases}

Let us consider a partition $\mathcal P_I$ of the interval $I$ into subintervals $I_1, I_2, \dots, I_n$ and a permutation
$\sigma : \{ 1, \dots, n\} \to \{ 1, \dots, n\}$.
Let us identify the vertical sides of the rectangle $I \times [0, 1]$ and, for all $1 \le i \le n$, interval $I_i$ at the top with interval $I_{\sigma(i)}$ at the bottom of the rectangle.

This defines a genus $g$ surface $\Xcomp$ with $k$ singularities and marked points at the boundaries of the intervals in the partition.
The numbers $g$ and $k$ depend on the permutation $\sigma$, but always satisfy
$\dim H_1(\Xcomp,\Sigma, \Z) = 2g+k-1=n+1$.
For $1 \le i \le n$, let $\eta_i \in H_1(\Xcomp, \Sigma, \Z)$ be a relative homology class corresponding to the path along the interval $I_i$ from left to right.
We define the classes $\gamma_i = \eta_{i} - \eta_{\sigma(i)}$;
they form a set $\Gamma$ and let $\Xcover$ be the corresponding $\Z^d$-cover.

An important example of compact translation surface cover in \cite{ADDS} and \cite{FU14}
is given by a rectangle with horizontal and vertical side length $s \in \N$ and $1$ respectively, and identifications
$(0,x) \sim (s,x)$, $(x,0) \sim (x+s-1,1)$, $(x,1) \sim (x+s-1,0)$,
for $x \in [0,1]$ and $(x,0) \sim (x,1)$ for $x \in [1,s-1]$.
It is a particular case of the above setting, with $n =3$, $|I_1| = |I_3| = 1$, $|I_2| = s-2$, $\sigma = (1 \, 3)$ and $\Gamma = \{ \gamma_3 - \gamma_1\}$.
The corresponding $\Z$-cover is called the $(s,1)$-{\em staircase}.

We formulate the next lemma for this set of examples, although it should be possible to adapt this argument for general cases.

\begin{lemma}\label{lem:rectanglecover}
	Let $\Xcover$ be the $\Z$-cover associated to the $(s,1)$-staircase.
	There exists an homogeneous linear pseudo-Anosov automorphism $\psicomp$ on $\Xcomp$ with zero average drift.
\end{lemma}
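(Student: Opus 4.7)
My plan is to apply Thurston's classical construction of linear pseudo-Anosov automorphisms as compositions of Dehn twists in two transverse periodic directions, using \Cref{prop:twist} as the key ingredient, and then to verify the homogeneity and zero-drift conditions by a combination of direct homology computations and a central symmetry argument.

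First I would identify the cylinder decompositions of the $(s,1)$-staircase $\Xcomp$ in the horizontal and vertical directions. Horizontally, $\Xcomp$ is a single cylinder (the whole surface, after identifying the left and right sides) of modulus $s$. Vertically, the surface decomposes into a ``step'' cylinder consisting of the two corner unit squares glued across the staircase step (circumference $2$, modulus $2$) together with, for $s \ge 3$, an ``interior'' cylinder $[1,s-1] \times [0,1]$ of modulus $1/(s-2)$; the two vertical moduli are rational and commensurable. \Cref{prop:twist} then produces linear parabolic automorphisms $\tau_h, \tau_v$ of $\Xcomp$ realizing Dehn twists in the two directions. For suitable positive integers $n, m$, the composition $\psicomp := \tau_v^m \circ \tau_h^n$ has linear part equal to a product of upper- and lower-unipotent integer matrices with trace strictly greater than $2$, and hence is a linear pseudo-Anosov automorphism of $\Xcomp$.

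The homogeneity of the lift to $\Xcover$ is, by \Cref{lem:psi_lift}, equivalent to $\coverhomo \circ \psicomp_* = \coverhomo$. Using \eqref{twist_action}, this reduces to a finite set of identities of the form $\sum_i k_i \langle \eta_i, \cdot \rangle \coverhomo(\eta_i) = 0$, where the $\eta_i$ are the core curves of the cylinders entering $\psicomp$. I would verify these identities by computing intersection numbers of each $\eta_i$ with the relative cycle $\Gamma$ defining the cover: the horizontal core is disjoint from the top--bottom identifications carrying $\Gamma$ so $\coverhomo(\eta^h)=0$, and for the vertical cores the multiplicities prescribed by commensurability can be chosen so that the weighted contributions cancel. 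The zero-drift condition is then handled via the central involution $\sigma(x,y) = (s-x, 1-y)$, which preserves both cylinder decompositions, conjugates each elementary Dehn twist to its inverse, and lifts to an involution $\tilde\sigma$ of $\Xcover$ satisfying $\xi \circ \tilde\sigma = -\xi$; combining this symmetry with the homogeneity gives $F \circ \sigma = -F$ and hence $\int_\Xcomp F \, \diff m = 0$.

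The main obstacle is the simultaneous compatibility of three integer-linear constraints on the exponents $n, m$ and the Dehn twist multiplicities: the pseudo-Anosov condition (trace strictly greater than $2$), the homogeneity condition for the cover, and the commensurability condition from \Cref{prop:twist}. For $s = 2$ this construction coincides with the one used in \cite{ADDS}, and the extra freedom afforded by the interior vertical cylinder for $s \ge 3$ allows the same strategy to extend without essential modifications; one simply has to verify by direct computation that the intersection pairings of the cores of the new cylinder with $\Gamma$ are compatible with the prescribed multiplicities.
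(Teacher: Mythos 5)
Your overall strategy matches the paper's proof closely: cylinder decompositions in the two coordinate directions, Dehn twists via \Cref{prop:twist}, preservation of the class $\gamma_4 - \gamma_2$ for homogeneity, and a central involution argument for zero drift. The cylinder moduli you identify and the homology verification are consistent with what the paper does.

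However, there is a genuine error in your zero-drift argument. You assert that the involution $\sigma$ with derivative $-\mathrm{Id}$ ``conjugates each elementary Dehn twist to its inverse.'' This is false: since $-\mathrm{Id}$ is central in $\mathrm{GL}_2(\R)$, the derivative of $\sigma\tau\sigma^{-1}$ equals $(-I)M(-I) = M$, which is the derivative of $\tau$, not of $\tau^{-1}$ (for a nontrivial parabolic, $M \neq M^{-1}$). So $\sigma$ \emph{commutes} with the Dehn twists — which is precisely what the paper asserts and what the argument actually requires. The distinction is substantive, not merely a slip of the tongue: if $\sigma\psicover\sigma^{-1} = \psicover^{-1}$, then writing $G$ for the Frobenius function of $\psicover^{-1}$ one finds $F \circ \sigma = -G$, and since $\int G\,\diff m = -\int F\,\diff m$, integrating gives only the tautology $\int F = \int F$, not the vanishing of the drift. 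By contrast, if $\sigma$ commutes with $\psicover$ and $\xi\circ\tilde\sigma = -\xi$, then $F\circ\sigma(x) = \xi(\tilde\sigma\tilde\psi(x')) - \xi(\tilde\sigma(x')) = -F(x)$, which does give $\int F\,\diff m = 0$. You should replace the conjugation-to-inverse claim by the commutation claim; the rest of your outline then goes through as in the paper.
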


\begin{proof}
	By \Cref{prop:twist}, the action (from the right on row vectors) of the matrices
	\[
		D_h = \begin{pmatrix}
			1 & s \\
			0 & 1
		\end{pmatrix}
		\quad \text{and} \quad
		D_v = \begin{pmatrix}
			1 	& 0\\
			2 	& 1
		\end{pmatrix}
	\]
	on the surface define parabolic automorphisms (in fact, Dehn twists) in the horizontal and vertical directions.
Indeed, we note that the surface $\Xcomp$ can be decomposed into the union of two vertical cylinders (of widths 2 and 1 and heights 1 and $s-2$ respectively), or of a single horizontal cylinder (of width $s$ and height $1$).
	The homology class defining the cover can be represented by linear combination of horizontal paths
	(here $\gamma_3 - \gamma_1$), so the first automorphism preserves them.
	If $\eta$ is the homology class corresponding to the vertical side, the second automorphism maps $\gamma_k$ to $\gamma_k + 2 \cdot \eta$, for $k = 1$ or $3$.
	Therefore, it maps $\gamma_3 - \gamma_1$ to itself and both $D_h$ and $D_v$ preserve $\gamma_3 - \gamma_1$ and so any hyperbolic composition of these two linear maps defines a linear pseudo-Anosov lift which preserves the cover morphism.\\

	Notice moreover that their is a linear involution $\sigma$ with derivative $- Id$ on $\Xcomp$ which commutes with Dehn twists and such that $\sigma_* \Gamma = - \Gamma$.
	Hence, if we consider the Frobenius function corresponding to one of these Dehn twists, it satisfies $F \circ \sigma = - F$ and has thus zero average.
	This implies, by \Cref{drift_compat}, that the Frobenius function of a pseudo-Anosov automorphisms obtained as product of such Dehn twists also has zero average.
\end{proof}
%

\subsection{Wind-tree billiards}\label{sec:wind-tree}

In a cover, and more generally in any infinite translation surface,  a direction is said to have finite horizon if there is no infinite line in that direction.
One can generalize the Lemma~\ref{lem:rectanglecover} to $\Z^d$-covers with finite horizon.

\begin{lemma}
	If a $\Z^d$-cover $\Xcover$ has finite horizon and is completely periodic in two distinct directions then there exists a pseudo-Anosov map $\psicomp$ on $\Xcomp$ which preserves $\coverhomo$.
\end{lemma}

\begin{proof}
	Let us a consider a cylinder in $\Xcover$, its core curve $\gamma$ is a loop, thus the image of the curve by the projection $\coverproj$ must be in $\ker \coverhomo$.
	The projection of the cylinder to the base surface is again a cylinder whose core curve homology is an integer multiple of the class $[p_* \gamma]$.
	By \eqref{twist_action}, the Dehn twist acts trivially on $H_1(\Xcomp \setminus \Sigma, \Z)$ quotiented by $\ker \coverhomo$.
	Hence the Dehn twist on this cylinder preserves $\coverhomo$.

	If the cylinders have commensurable moduli, with common multiple $\mu$, this implies that there exists a linear automorphism $\binom{1 \ \mu}{0 \ 1}$
	in the corresponding direction on $\Xcomp$ which can be lifted to $\Xcover$.
	Thus if it is finite horizon and completely periodic in two distinct directions, the product of the corresponding two matrices is hyperbolic and the composition of the two Dehn twists produces the pseudo-Anosov automorphism we were looking for.
\end{proof}

\subsubsection{The wind-tree model with plus-shaped obstacles}
For illustration, we present a variation of the Ehrenfest wind-tree model as an example of a $\Z^2$-cover with finite horizon in vertical and horizontal directions.
Whereas such directions for rectangular wind-tree is less straightforward to illustrate.\\

\begin{figure}[ht]
\begin{center}
\begin{minipage}[t]{0.4\textwidth}
\begin{tikzpicture}[scale=0.45]
\filldraw[-, color=gray]  (1,6) -- (4,6) -- (4,7)--(1,7)--(1,6);
\filldraw[-, color=gray]  (2,5) -- (2,8) -- (3,8)--(3,5)--(2,5);
\draw[-]  (3,6) -- (4,6) -- (4,7)--(3,7)--(3,8)--(2,8)--(2,7)--(1,7)--(1,6)--(2,6)--(2,5)--(3,5)--(3,6);
\filldraw[-, color=gray]  (7,6) -- (10,6) -- (10,7)--(7,7)--(7,6);
\filldraw[-, color=gray]  (8,5) -- (8,8) -- (9,8)--(9,5)--(8,5);
\draw[-]  (9,6) -- (10,6) -- (10,7)--(9,7)--(9,8)--(8,8)--(8,7)--(7,7)--(7,6)--(8,6)--(8,5)--(9,5)--(9,6);
\filldraw[-, color=gray]  (4,8) -- (7,8) -- (7,9)--(4,9)--(4,8);
\filldraw[-, color=gray]  (5,7) -- (5,10) -- (6,10)--(6,7)--(5,7);
\draw[-]  (6,8) -- (7,8) -- (7,9)--(6,9)--(6,10)--(5,10)--(5,9)--(4,9)--(4,8)--(5,8)--(5,7)--(6,7)--(6,8);
\filldraw[-, color=gray]  (4,4) -- (7,4) -- (7,5)--(4,5)--(4,4);
\filldraw[-, color=gray]  (5,3) -- (5,6) -- (6,6)--(6,3)--(5,3);
\draw[-]  (6,4) -- (7,4) -- (7,5)--(6,5)--(6,6)--(5,6)--(5,5)--(4,5)--(4,4)--(5,4)--(5,3)--(6,3)--(6,4);
\filldraw[-, color=gray]  (10,8) -- (13,8) -- (13,9)--(10,9)--(10,8);
\filldraw[-, color=gray]  (11,7) -- (11,10) -- (12,10)--(12,7)--(11,7);
\draw[-]  (12,8) -- (13,8) -- (13,9)--(12,9)--(12,10)--(11,10)--(11,9)--(10,9)--(10,8)--(11,8)--(11,7)--(12,7)--(12,8);
\filldraw[-, color=gray]  (10,4) -- (13,4) -- (13,5)--(10,5)--(10,4);
\filldraw[-, color=gray]  (11,3) -- (11,6) -- (12,6)--(12,3)--(11,3);
\draw[-]  (12,4) -- (13,4) -- (13,5)--(12,5)--(12,6)--(11,6)--(11,5)--(10,5)--(10,4)--(11,4)--(11,3)--(12,3)--(12,4);
\filldraw[-, color=gray]  (7,2) -- (10,2) -- (10,3)--(7,3)--(7,2);
\filldraw[-, color=gray]  (8,1) -- (8,4) -- (9,4)--(9,1)--(8,1);
\draw[-]  (9,2) -- (10,2) -- (10,3)--(9,3)--(9,4)--(8,4)--(8,3)--(7,3)--(7,2)--(8,2)--(8,1)--(9,1)--(9,2);
\filldraw[-, color=gray]  (-2,4) -- (1,4) -- (1,5)--(-2,5)--(-2,4);
\filldraw[-, color=gray]  (-1,3) -- (-1,6) -- (0,6)--(0,3)--(-1,3);
\draw[-]  (0,4) -- (1,4) -- (1,5)--(0,5)--(0,6)--(-1,6)--(-1,5)--(-2,5)--(-2,4)--(-1,4)--(-1,3)--(0,3)--(0,4);
\filldraw[-, color=gray]  (-2,8) -- (1,8) -- (1,9)--(-2,9)--(-2,8);
\filldraw[-, color=gray]  (-1,7) -- (-1,10) -- (0,10)--(0,7)--(-1,7);
\draw[-]  (0,8) -- (1,8) -- (1,9)--(0,9)--(0,10)--(-1,10)--(-1,9)--(-2,9)--(-2,8)--(-1,8)--(-1,7)--(0,7)--(0,8);

\draw[-, dotted, line width=1.5]  (-.5,4.5)--(5.5,4.5)--(5.5,8.5)--(-.5,8.5)--(-.5,4.5);
\end{tikzpicture}
\end{minipage}
\qquad \quad
\begin{minipage}[t]{0.4\textwidth}
	\vspace{-5.cm}
\begin{tikzpicture}[scale=0.4]
	\begin{scope}[decoration={ markings, mark=at position 0.5 with {\arrow{stealth}}}]
\filldraw[-, color=gray]  (1,6) -- (4,6) -- (4,7)--(1,7)--(1,6);
\filldraw[-, color=gray]  (2,5) -- (2,8) -- (3,8)--(3,5)--(2,5);
\draw[-]  (3,6) -- (4,6) -- (4,7)--(3,7)--(3,8)--(2,8)--(2,7)--(1,7)--(1,6)--(2,6)--(2,5)--(3,5)--(3,6);
\draw[-]  (4,8.5)--(4,8)--(5,8)--(5,7)--(5.5,7);
\draw[-]  (5.5,6)--(5,6)--(5,5)--(4,5)--(4,4.5);
\draw[-]  (1,4.5) -- (1,5)--(0,5)--(0,6)--(-.5,6);
\draw[-]  (-.5,7)--(0,7)--(0,8) -- (1,8) -- (1,8.5);
\draw[dotted, color=blue, line width=2, postaction={decorate}]  (-.5,8.5)--(5.5,8.5);
\draw[dotted, color=blue, line width=2, postaction={decorate}]  (-.5,4.5)--(5.5,4.5);
\draw[dotted, color=red, line width=2, postaction={decorate}]  (-.5,4.5)--(-.5,8.5);
\draw[dotted, color=red, line width=2, postaction={decorate}]  (5.5,4.5)--(5.5,8.5);

\begin{scope}[shift={(10,0)}]
\filldraw[-, color=gray]  (1,6) -- (4,6) -- (4,7)--(1,7)--(1,6);
\filldraw[-, color=gray]  (2,5) -- (2,8) -- (3,8)--(3,5)--(2,5);
\draw[-]  (3,6) -- (4,6) -- (4,7)--(3,7)--(3,8)--(2,8)--(2,7)--(1,7)--(1,6)--(2,6)--(2,5)--(3,5)--(3,6);
\draw[-]  (4,8.5)--(4,8)--(5,8)--(5,7)--(5.5,7);
\draw[-]  (5.5,6)--(5,6)--(5,5)--(4,5)--(4,4.5);
\draw[-]  (1,4.5) -- (1,5)--(0,5)--(0,6)--(-.5,6);
\draw[-]  (-.5,7)--(0,7)--(0,8) -- (1,8) -- (1,8.5);
\draw[dotted, color=blue, line width=2, postaction={decorate}]  (-.5,8.5)--(5.5,8.5);
\draw[dotted, color=blue, line width=2, postaction={decorate}]  (-.5,4.5)--(5.5,4.5);
\draw[dotted, color=red, line width=2, postaction={decorate}]  (-.5,8.5)--(-.5,4.5);
\draw[dotted, color=red, line width=2, postaction={decorate}]  (5.5,8.5)--(5.5,4.5);
\end{scope}

\begin{scope}[shift={(0,-7)}]
\filldraw[-, color=gray]  (1,6) -- (4,6) -- (4,7)--(1,7)--(1,6);
\filldraw[-, color=gray]  (2,5) -- (2,8) -- (3,8)--(3,5)--(2,5);
\draw[-]  (3,6) -- (4,6) -- (4,7)--(3,7)--(3,8)--(2,8)--(2,7)--(1,7)--(1,6)--(2,6)--(2,5)--(3,5)--(3,6);
\draw[-]  (4,8.5)--(4,8)--(5,8)--(5,7)--(5.5,7);
\draw[-]  (5.5,6)--(5,6)--(5,5)--(4,5)--(4,4.5);
\draw[-]  (1,4.5) -- (1,5)--(0,5)--(0,6)--(-.5,6);
\draw[-]  (-.5,7)--(0,7)--(0,8) -- (1,8) -- (1,8.5);
\draw[dotted, color=blue, line width=2, postaction={decorate}]  (5.5,8.5)--(-.5,8.5);
\draw[dotted, color=blue, line width=2, postaction={decorate}]  (5.5,4.5)--(-.5,4.5);
\draw[dotted, color=red, line width=2, postaction={decorate}]  (-.5,4.5)--(-.5,8.5);
\draw[dotted, color=red, line width=2, postaction={decorate}]  (5.5,4.5)--(5.5,8.5);
\end{scope}

\begin{scope}[shift={(10,-7)}]
\filldraw[-, color=gray]  (1,6) -- (4,6) -- (4,7)--(1,7)--(1,6);
\filldraw[-, color=gray]  (2,5) -- (2,8) -- (3,8)--(3,5)--(2,5);
\draw[-]  (3,6) -- (4,6) -- (4,7)--(3,7)--(3,8)--(2,8)--(2,7)--(1,7)--(1,6)--(2,6)--(2,5)--(3,5)--(3,6);
\draw[-]  (4,8.5)--(4,8)--(5,8)--(5,7)--(5.5,7);
\draw[-]  (5.5,6)--(5,6)--(5,5)--(4,5)--(4,4.5);
\draw[-]  (1,4.5) -- (1,5)--(0,5)--(0,6)--(-.5,6);
\draw[-]  (-.5,7)--(0,7)--(0,8) -- (1,8) -- (1,8.5);
\draw[dotted, color=blue, line width=2, postaction={decorate}]  (5.5,8.5)--(-.5,8.5);
\draw[dotted, color=blue, line width=2, postaction={decorate}]  (5.5,4.5)--(-.5,4.5);
\draw[dotted, color=red, line width=2, postaction={decorate}]  (-.5,8.5)--(-.5,4.5);
\draw[dotted, color=red, line width=2, postaction={decorate}]  (5.5,8.5)--(5.5,4.5);
\end{scope}

\draw[dotted, line width = 1.5]  (7.5,10)--(7.5,-5);
\draw[dotted, line width = 1.5]  (-1.5,3.)--(16.5,3.);
\end{scope}

\end{tikzpicture}
\end{minipage}
\caption{Left: Wind-tree model with plus-shapes wind-tree model (and finite horizon in horizontal and vertical directions).
	A fundamental domain in dotted lines.
Right: Four copies forming a fundamental domain of the unfolded  translation surface of the plus-shaped wind-tree model.
}
\label{fig:pluswind}
\end{center}
\end{figure}
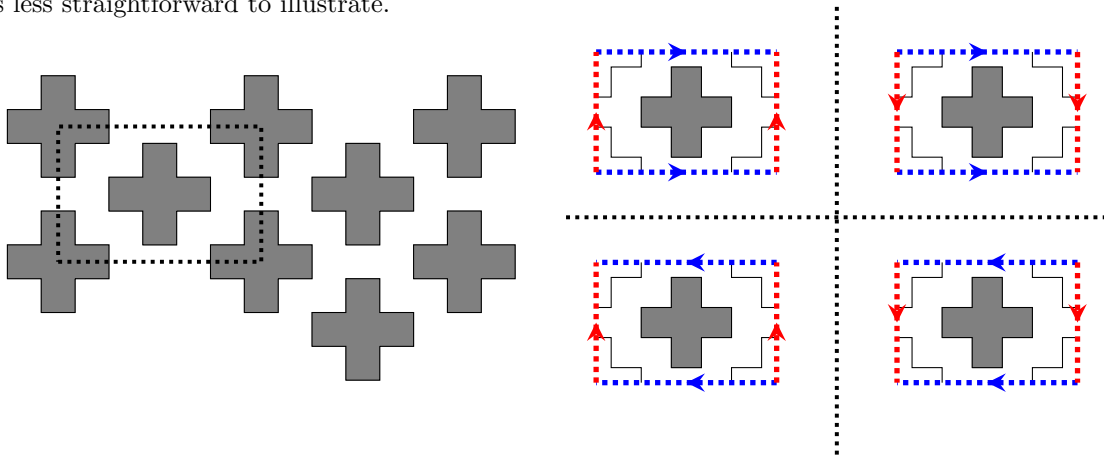

In the wind-tree model represented in \Cref{fig:pluswind}, the matrices
\[
D_h = 	\begin{pmatrix}
		1 & 12\\
		0 & 1
	\end{pmatrix}
	\qquad \text{and} \qquad
D_v =	\begin{pmatrix}
		1 & 0\\
		6 & 1
	\end{pmatrix}
\]
act as Dehn twists that commute with deck transformations.
These two maps generate a group of automorphisms that contain infinitely many pseudo-Anosov which also commute with deck transformations.\\

The classical construction to study the flow of wind-tree models is to \textit{unfold} the billiard in the torus into 4 copies.
So that each time the flow is bouncing on a side it is translated to the symmetric surface with respect to this flow.
The $\Z^2$-cover is thus defined by the homology classes $\gamma_v$ in red
(in \Cref{fig:Dehn}) for the first coordinate and $\gamma_h$ in blue for the second.
These correspond to $\gamma_1$ and $\gamma_2$ in the first section. \\

Notice that for symmetric obstacles, one has two automorphisms
$\tau_h$ and $\tau_v$  of the surface which exchange the two copies horizontally or vertically.
Notice that these automorphisms commute with the vertical and horizontal Dehn twists and
\[ (\tau_h)_* \gamma_h = \gamma_h, \quad (\tau_v)_* \gamma_v = \gamma_v, \quad (\tau_h)_* \gamma_v = -\gamma_v, \quad (\tau_v)_* \gamma_h = -\gamma_h.\]

\begin{lemma}\label{lem:FrobDehn1}
	The Frobenius function for the Dehn twists on the plus-shaped wind-tree models have zero average drift.
\end{lemma}

\begin{proof}
	Consider the horizontal Dehn twist $D_h$.
	It preserves the top two copies of the unfolded surface in \Cref{fig:pluswind} (right) and also the bottom two.
	But as the $\tau_v$ automorphism sends $\gamma_h$ to $-\gamma_h$, the Frobenius function for $D_h$ satisfies $F \circ \tau_h = - F$.
	Thus its integral on the surface is zero. The argument for the vertical Dehn twist $D_v$ is the same; it preserves the left two copies of the unfolded surface and also the right two.
\end{proof}

\subsubsection{The classical wind-tree model}

In the classical Ehrenfest wind-tree model, the obstacles are $a \times b$-rectangles centered and aligned with the lattice $\Z^2$.
Let us take $a = b = \frac12$, see \Cref{fig:fd1}.
All lines in north-east and north-west direction have finite horizon.
We can take cylinders in those direction, which (when lifted to the unfolding) have width $\frac14 \sqrt{2}$ and length $3\sqrt2$. Two of them cover the unfolded fundamental domain.
When lifted to the $\Z^2$-cover, they are still cylinders (i.e., they don't break up into strips, see \Cref{fig:pluswind}, and therefore the appropriate Dehn twists lift to the cover.

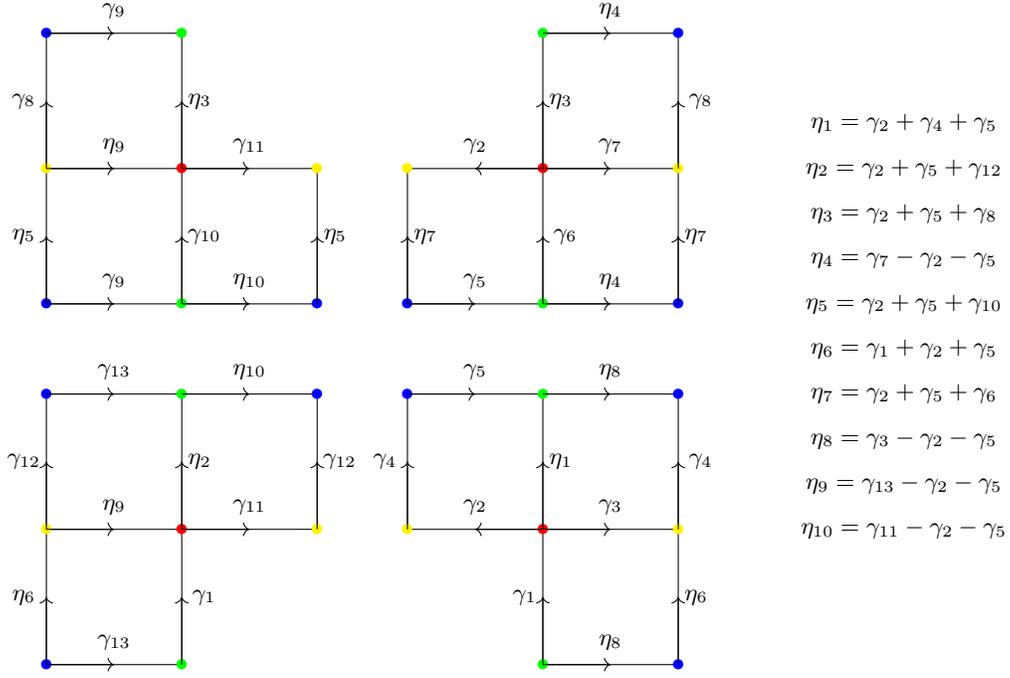
\begin{figure}[h]
\begin{center}
\begin{tikzpicture}[scale=0.6]
	\draw[-] (0,0)--(0,6)--(6,6)--(6,3)--(3,3)--(3,0)--(0,0);
	\draw[-] (8,6)--(14,6)--(14,0)--(11,0)--(11,3)--(8,3)--(8,6);
	\draw[-] (0,8)--(0,14)--(3,14)--(3,11)--(6,11)--(6,8)--(0,8);
	\draw[-] (8,8)--(8,11)--(11,11)--(11,14)--(14,14)--(14,8)--(8,8);
	\draw[-] (0,3)--(3,3)--(3,6);
	\draw[-] (0,11)--(3,11)--(3,8);
	\draw[-] (11,8)--(11,11)--(14,11);
	\draw[-] (11,6)--(11,3)--(14,3);
	\node at (3,3) {{\color{red}$\bullet$}};
	\node at (11,3) {{\color{red}$\bullet$}};
	\node at (3,11) {{\color{red}$\bullet$}};
	\node at (11,11) {{\color{red}$\bullet$}};
	\node at (3,0) {{\color{green}$\bullet$}};
	\node at (3,6) {{\color{green}$\bullet$}};
	\node at (3,8) {{\color{green}$\bullet$}};
	\node at (3,14) {{\color{green}$\bullet$}};
	\node at (11,0) {{\color{green}$\bullet$}};
	\node at (11,6) {{\color{green}$\bullet$}};
	\node at (11,8) {{\color{green}$\bullet$}};
	\node at (11,14) {{\color{green}$\bullet$}};
	\node at (0,14) {{\color{blue}$\bullet$}};
	\node at (0,8) {{\color{blue}$\bullet$}};
	\node at (0,6) {{\color{blue}$\bullet$}};
	\node at (0,0) {{\color{blue}$\bullet$}};
	\node at (6,8) {{\color{blue}$\bullet$}};
	\node at (6,6) {{\color{blue}$\bullet$}};
	\node at (8,8) {{\color{blue}$\bullet$}};
	\node at (8,6) {{\color{blue}$\bullet$}};
	\node at (14,14) {{\color{blue}$\bullet$}};
	\node at (14,8) {{\color{blue}$\bullet$}};
	\node at (14,6) {{\color{blue}$\bullet$}};
	\node at (14,0) {{\color{blue}$\bullet$}};
	\node at (0,3) {{\color{yellow}$\bullet$}};
	\node at (0,11) {{\color{yellow}$\bullet$}};
	\node at (6,3) {{\color{yellow}$\bullet$}};
	\node at (6,11) {{\color{yellow}$\bullet$}};
	\node at (8,3) {{\color{yellow}$\bullet$}};
	\node at (8,11) {{\color{yellow}$\bullet$}};
	\node at (14,3) {{\color{yellow}$\bullet$}};
	\node at (14,11) {{\color{yellow}$\bullet$}};
	\draw[->] (11,0)--(11,1.5); \node at (10.6,1.5) {\small $\gamma_1$};
	\draw[->] (3,0)--(3,1.5); \node at (3.5,1.5) {\small $\gamma_1$};
	\draw[->] (11,3)--(9.5,3); \node at (9.5,3.5) {\small $\gamma_2$};
	\draw[->] (11,11)--(9.5,11); \node at (9.5,11.5) {\small $\gamma_2$};
	\draw[->] (11,3)--(12.5,3); \node at (12.5,3.5) {\small $\gamma_3$};
	\draw[->] (14,3)--(14,4.5); \node at (14.5,4.5) {\small $\gamma_4$};
	\draw[->] (8,3)--(8,4.5); \node at (7.5,4.5) {\small $\gamma_4$};
	\draw[->] (8,6)--(9.5,6); \node at (9.5,6.5) {\small $\gamma_5$};
	\draw[->] (8,8)--(9.5,8); \node at (9.5,8.5) {\small $\gamma_5$};
	\draw[->] (11,8)--(11,9.5); \node at (11.5,9.5) {\small $\gamma_6$};
	\draw[->] (11,11)--(12.5,11); \node at (12.5,11.5) {\small $\gamma_7$};
	\draw[->] (14,11)--(14,12.5); \node at (14.5,12.5) {\small $\gamma_8$};
	\draw[->] (0,11)--(0,12.5); \node at (-0.5,12.5) {\small $\gamma_8$};
	\draw[->] (0,14)--(1.5,14); \node at (1.5,14.5) {\small $\gamma_9$};
	\draw[->] (0,8)--(1.5,8); \node at (1.5,8.5) {\small $\gamma_9$};
	\draw[->] (3,8)--(3,9.5); \node at (3.5,9.5) {\small $\gamma_{10}$};
	\draw[->] (3,11)--(4.5,11); \node at (4.5,11.5) {\small $\gamma_{11}$};
	\draw[->] (3,3)--(4.5,3); \node at (4.5,3.5) {\small $\gamma_{11}$};
	\draw[->] (6,3)--(6,4.5); \node at (6.5,4.5) {\small $\gamma_{12}$};
	\draw[->] (0,3)--(0,4.5); \node at (-0.5,4.5) {\small $\gamma_{12}$};
	\draw[->] (0,6)--(1.5,6); \node at (1.5,6.5) {\small $\gamma_{13}$};
	\draw[->] (0,0)--(1.5,0); \node at (1.5,0.5) {\small $\gamma_{13}$};
	\draw[->] (11,3)--(11,4.5); \node at (11.4,4.5) {\small $\eta_1$};
    \draw[->] (3,3)--(3,4.5); \node at (3.4,4.5) {\small $\eta_2$};
    \draw[->] (11,11)--(11,12.5); \node at (11.4,12.5) {\small $\eta_3$};
    \draw[->] (3,11)--(3,12.5); \node at (3.4,12.5) {\small $\eta_3$};
    \draw[->] (14,8)--(14,9.5); \node at (14.4,9.5) {\small $\eta_7$};
    \draw[->] (0,8)--(0,9.5); \node at (-0.5,9.5) {\small $\eta_5$};
      \draw[->] (14,0)--(14,1.5); \node at (14.4,1.5) {\small $\eta_6$};
    \draw[->] (0,0)--(0,1.5); \node at (-0.5,1.5) {\small $\eta_6$};
    \draw[->] (8,8)--(8,9.5); \node at (8.4,9.5) {\small $\eta_7$};
    \draw[->] (6,8)--(6,9.5); \node at (6.4,9.5) {\small $\eta_5$};
    \draw[->] (11,0)--(12.5,0); \node at (12.5,0.5) {\small $\eta_8$};
    \draw[->] (11,14)--(12.5,14); \node at (12.5,14.5) {\small $\eta_{4}$};
    \draw[->] (0,3)--(1.5,3); \node at (1.5,3.5) {\small $\eta_9$};
    \draw[->] (0,11)--(1.5,11); \node at (1.5,11.5) {\small $\eta_9$};
    \draw[->] (3,6)--(4.5,6); \node at (4.5,6.5) {\small $\eta_{10}$};
    \draw[->] (3,8)--(4.5,8); \node at (4.5,8.5) {\small $\eta_{10}$};
    \draw[->] (11,6)--(12.5,6); \node at (12.5,6.5) {\small $\eta_8$};     \draw[->] (11,8)--(12.5,8); \node at (12.5,8.5) {\small $\eta_4$};
\node at (19,12) {\small $\eta_1 = \gamma_2+\gamma_4+\gamma_5$};
\node at (19,11) {\small $\eta_2 = \gamma_2+\gamma_5+\gamma_{12}$};
\node at (19,10) {\small $\eta_3 = \gamma_2+\gamma_5+\gamma_8$};
\node at (19,9) {\small $\eta_4 = \gamma_7-\gamma_2-\gamma_5$};
\node at (19,8) {\small $\eta_5 = \gamma_2+\gamma_5+\gamma_{10}$};
\node at (19,7) {\small $\eta_6 = \gamma_1+\gamma_2+\gamma_5$};
\node at (19,6) {\small $\eta_7 = \gamma_2+\gamma_5+\gamma_6$};
\node at (19,5) {\small $\eta_8 = \gamma_3-\gamma_2-\gamma_5$};
\node at (19,4) {\small $\eta_9 =  \gamma_{13}-\gamma_2-\gamma_5$};
\node at (19,3) {\small $\eta_{10} = \gamma_{11}-\gamma_2-\gamma_5$};
\end{tikzpicture}
\end{center}
\caption{A fundamental domain for the surface $X$. The four colored dots represents the four singularities in $\Sigma$, each of angle $6\pi$. The elements $\gamma_1, \dots, \gamma_{13} \in H(X,\Sigma,\Z)$ form a basis of the relative homology: the homology class of any oriented path connecting two elements of $\Sigma$ can be written as a linear combination with integer coefficients of $\gamma_1, \dots, \gamma_{13}$ (for example, the path marked as $\eta_1$ corresponds to the homology class $\gamma_2+\gamma_4+\gamma_5$, since the concatenation of the paths $\gamma_2, \gamma_4,\gamma_5$ and $-\eta_1$ bounds a disk and hence is trivial in homology).}
\label{fig:fd1}
\end{figure}

\begin{figure}[t]
\begin{center}
	\begin{tikzpicture}[scale=0.6]
		\filldraw[dashed,pink] (0,0)--(0,2)--(2,4)--(4,4);
		\filldraw[dashed,pink] (2,5)--(4,5)--(4,7);
		\filldraw[dashed,pink] (0,5)--(0,7)--(2,9)--(2,7);
		\filldraw[dashed,pink] (5,5)--(9,9)--(7,9)--(7,7)--(5,7);
		\filldraw[dashed,pink] (7,5)--(9,5)--(9,7);
		\filldraw[dashed,pink] (5,2)--(7,4)--(9,4)--(7,2);
		\filldraw[dashed,pink] (7,0)--(9,0)--(9,2);
		\filldraw[dashed,green!20] (0,5)--(2,5)--(4,7)--(2,7);
		\filldraw[dashed,green!20] (0,7)--(0,9)--(2,9);
		\filldraw[dashed,green!20] (0,2)--(0,4)--(2,4);
		\filldraw[dashed,green!20] (0,0)--(4,4)--(4,2)--(2,2)--(2,0);
		\filldraw[dashed,green!20] (5,2)--(5,4)--(7,4);
		\filldraw[dashed,green!20] (7,0)--(7,2)--(9,4)--(9,2);
		\filldraw[dashed,green!20] (5,5)--(9,9)--(9,7)--(7,5);
		\filldraw[dashed,blue!20] (18,2)--(16,2)--(14,4)--(16,4);
		\filldraw[dashed,blue!20] (14,0)--(14,2)--(16,0);
		\filldraw[dashed,blue!20] (19,4)--(23,0)--(23,2)--(21,4);
		\filldraw[dashed,blue!20] (18,5)--(14,9)--(16,9)--(16,7)--(18,7);
		\filldraw[dashed,blue!20] (14,5)--(14,7)--(16,5);
		\filldraw[dashed,blue!20] (23,5)--(23,7)--(21,9)--(21,7);
		\filldraw[dashed,blue!20] (19,5)--(19,7)--(21,5);
		\filldraw[dashed,orange!20] (14,2)--(14,4)--(16,2)--(16,0);
		\filldraw[dashed,orange!20] (14,9)--(14,7)--(16,5)--(18,5);
		\filldraw[dashed,orange!20] (16,4)--(18,4)--(18,2);
		\filldraw[dashed,orange!20] (23,0)--(19,4)--(19,2)--(21,2)--(21,0);
		\filldraw[dashed,orange!20] (23,9)--(21,9)--(23,7);
		\filldraw[dashed,orange!20] (19,7)--(21,7)--(23,5)--(21,5);
		\filldraw[dashed,orange!20] (23,4)--(21,4)--(23,2);
		\draw[-] (0,0)--(0,4)--(4,4)--(4,2)--(2,2)--(2,0)--(0,0);
		\draw[-] (5,4)--(9,4)--(9,0)--(7,0)--(7,2)--(5,2)--(5,4);
		\draw[-] (0,5)--(0,9)--(2,9)--(2,7)--(4,7)--(4,5)--(0,5);
		\draw[-] (5,5)--(5,7)--(7,7)--(7,9)--(9,9)--(9,5)--(5,5);
		\draw[-] (0,2)--(2,2)--(2,4);
		\draw[-] (0,7)--(2,7)--(2,5);
		\draw[-] (7,5)--(7,7)--(9,7);
		\draw[-] (7,4)--(7,2)--(9,2);

		\draw[-] (0+14,0)--(0+14,4)--(4+14,4)--(4+14,2)--(2+14,2)--(2+14,0)--(0+14,0);
		\draw[-] (5+14,4)--(9+14,4)--(9+14,0)--(7+14,0)--(7+14,2)--(5+14,2)--(5+14,4);
		\draw[-] (0+14,5)--(0+14,9)--(2+14,9)--(2+14,7)--(4+14,7)--(4+14,5)--(0+14,5);
		\draw[-] (5+14,5)--(5+14,7)--(7+14,7)--(7+14,9)--(9+14,9)--(9+14,5)--(5+14,5);
		\draw[-] (0+14,2)--(2+14,2)--(2+14,4);
		\draw[-] (0+14,7)--(2+14,7)--(2+14,5);
		\draw[-] (7+14,5)--(7+14,7)--(9+14,7);
		\draw[-] (7+14,4)--(7+14,2)--(9+14,2);
		\draw[-,red, thick] (0,2)--(2,4);
		\draw[-,red, thick] (0,7)--(2,9);
		\draw[-,red, thick] (2,5)--(4,7);
		\draw[-,red, thick] (7,5)--(9,7);
		\draw[-,red, thick] (5,2)--(7,4);
		\draw[-,red, thick] (7,0)--(9,2);
		\draw[-,green, thick] (0,0)--(4,4);
		\draw[-,green, thick] (0,5)--(2,7);
		\draw[-,green, thick] (5,5)--(9,9);
		\draw[-,green, thick] (7,2)--(9,4);
         \draw[-,red, thick] (0.1,2)--(2.1,4);
		\draw[-,red, thick] (0.1,7)--(2.1,9);
		\draw[-,red, thick] (2.1,5)--(4.1,7);
		\draw[-,red, thick] (7.1,5)--(9.1,7);
		\draw[-,red, thick] (5.1,2)--(7.1,4);
		\draw[-,red, thick] (7.1,0)--(9.1,2);
		\draw[-,green, thick] (0.1,0)--(4.1,4);
		\draw[-,green, thick] (0.1,5)--(2.1,7);
		\draw[-,green, thick] (5.1,5)--(9.1,9);
		\draw[-,green, thick] (7.1,2)--(9.1,4);
		\draw[-,blue, thick] (16,0)--(14,2);
		\draw[-,blue, thick] (18,2)--(16,4);
		\draw[-,blue, thick] (16,5)--(14,7);
		\draw[-,blue, thick] (21,5)--(19,7);
		\draw[-,blue, thick] (23,7)--(21,9);
		\draw[-,blue, thick] (23,2)--(21,4);
		\draw[-,orange, thick] (16,2)--(14,4);
		\draw[-,orange, thick] (18,5)--(14,9);
		\draw[-,orange, thick] (23,5)--(21,7);
		\draw[-,orange, thick] (23,0)--(19,4);
         \draw[-,blue, thick] (16.1,0)--(14.1,2);
		\draw[-,blue, thick] (18.1,2)--(16.1,4);
		\draw[-,blue, thick] (16.1,5)--(14.1,7);
		\draw[-,blue, thick] (21.1,5)--(19.1,7);
		\draw[-,blue, thick] (23.1,7)--(21.1,9);
		\draw[-,blue, thick] (23.1,2)--(21.1,4);
		\draw[-,orange, thick] (16.1,2)--(14.1,4);
		\draw[-,orange, thick] (18.1,5)--(14.1,9);
		\draw[-,orange, thick] (23.1,5)--(21.1,7);
		\draw[-,orange, thick] (23.1,0)--(19.1,4);
		\end{tikzpicture}
	\end{center}
\caption{Two copies of the fundamental domain for the surface $X$ with a cylinder decomposition for the north-east Dehn twist $D_{ne}$ (left) and for the north-west Dehn twist $D_{nw}$ (right). The closed paths in red and green on the left correspond to the same element $\gamma_{ne} \in H_1(X,\Sigma,\Z)$; similarly, the closed paths in blue and orange on the right correspond to the same element $\gamma_{nw} \in H_1(X,\Sigma,\Z)$.}
\label{fig:fd2}
\end{figure}
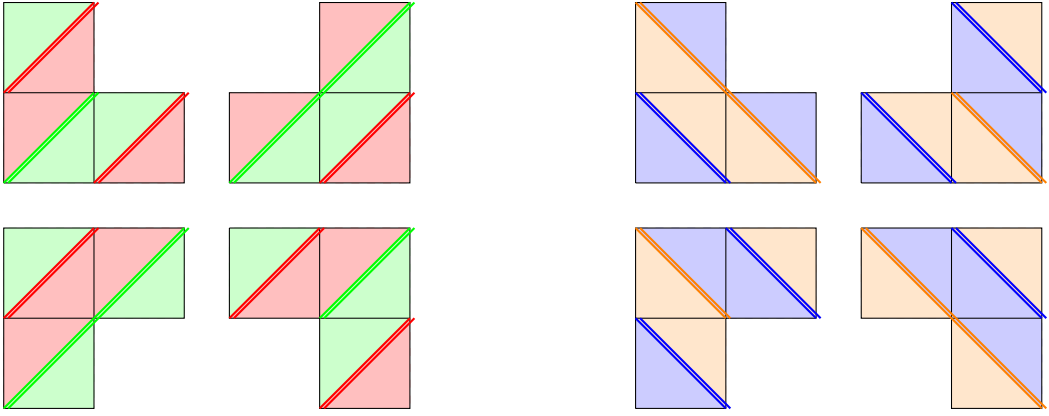

As can be seen from \Cref{fig:fd1}, the unfolded fundamental domain has four singularities, with cone angle $6\pi$.
A Dehn twist that shears by twelve units, maps these two cylinders back to themselves in a way that extends continuously over both cylinders. The affine parts of these Dehn twists, in the north-east and north-west directions,
are represented by the matrices
$$
\begin{pmatrix}
          1+\frac{t}{2} & -\frac{t}{2} \\[2mm] \frac{t}{2} &  1-\frac{t}{2}
         \end{pmatrix}
\quad \text{ and } \quad
\begin{pmatrix}
          1+\frac{t}{2} & \frac{t}{2} \\[2mm] -\frac{t}{2} &  1-\frac{t}{2}
         \end{pmatrix},
         \qquad t = 12.
$$
We now describe the action of the north-east and of the north-west Dehn twists $D_{ne}$ and $D_{nw}$ on the relative homology $H_1(X,\Sigma,\Z)$ as follows: let us fix the basis $\gamma_1, \dots, \gamma_{13}$ as in \Cref{fig:fd1} (note that the rank of $H_1(X,\Sigma,\Z)$ is $2\cdot 5 + 4-1 = 13$, where $5$ is the genus of $X$ and $4$ is the number of singularities).
With respect to this basis, the cover is generated by the pair $\Gamma = \{-\gamma_3+\gamma_7+\gamma_9-\gamma_{13}, -\gamma_4 -\gamma_6+\gamma_{10}+\gamma_{12}\}$.
We define
$$
\gamma_{ne} := \gamma_1+\gamma_3+\gamma_4+\gamma_5+\gamma_6+\gamma_7+\gamma_8+\gamma_9+\gamma_{10}+\gamma_{11}+\gamma_{12}+\gamma_{13},
$$
and
$$
\gamma_{nw} := \gamma_1+6\gamma_2-\gamma_3+\gamma_4+5\gamma_5+\gamma_6-\gamma_7+\gamma_8-\gamma_9+\gamma_{10}-\gamma_{11}+\gamma_{12}-\gamma_{13}.
$$
The action of the north-east Dehn-twist on $H_1(X,\Sigma,\Z)$ is given by
(see as in \Cref{fig:fd2})
$$
\begin{cases}
	\gamma_i \mapsto \gamma_i + \gamma_{ne} & \text{for } i\in \{1,2,4,6,8,10,12\}, \\
	\gamma_j \mapsto \gamma_j -  \gamma_{ne} & \text{for } j\in \{3,5,7,9,11,13\},
\end{cases}
$$
so that
$$
D_{ne} =
\begin{pmatrix}
2 & 1 & -1 & 1 & -1 & 1 & -1 & 1 & -1 & 1 & -1 & 1 & -1 \\
0 & 1 & 0 & 0 & 0 & 0 & 0 & 0 & 0 & 0 & 0 & 0 & 0 \\
1 & 1 & 0 & 1 & -1 & 1 & -1 & 1 & -1 & 1 & -1 & 1 & -1 \\
1 & 1 & -1 & 2 & -1 & 1 & -1 & 1 & -1 & 1 & -1 & 1 & -1 \\
1 & 1 & -1 & 1 & 0 & 1 & -1 & 1 & -1 & 1 & -1 & 1 & -1 \\
1 & 1 & -1 & 1 & -1 & 2 & -1 & 1 & -1 & 1 & -1 & 1 & -1 \\
1 & 1 & -1 & 1 & -1 & 1 & 0 & 1 & -1 & 1 & -1 & 1 & -1 \\
1 & 1 & -1 & 1 & -1 & 1 & -1 & 2 & -1 & 1 & -1 & 1 & -1 \\
1 & 1 & -1 & 1 & -1 & 1 & -1 & 1 & 0 & 1 & -1 & 1 & -1 \\
1 & 1 & -1 & 1 & -1 & 1 & -1 & 1 & -1 & 2 & -1 & 1 & -1 \\
1 & 1 & -1 & 1 & -1 & 1 & -1 & 1 & -1 & 1 & 0 & 1 & -1 \\
1 & 1 & -1 & 1 & -1 & 1 & -1 & 1 & -1 & 1 & -1 & 2 & -1 \\
1 & 1 & -1 & 1 & -1 & 1 & -1 & 1 & -1 & 1 & -1 & 1 & 0 \\
\end{pmatrix}
$$
The north-west Dehn-twist acts on $H_1(X,\Sigma,\Z)$ as
$$
\begin{cases}
 \gamma_i \mapsto \gamma_i + \gamma_{nw} & \text{for } i \neq 2, \\
 \gamma_2 \mapsto \gamma_2 - \gamma_{nw},
\end{cases}
$$
so that
$$
D_{nw} =
\begin{pmatrix}
2 & -1 & 1 & 	1 & 1 & 1 & 	1 & 1 & 1 & 		1 & 1 & 1 & 1 \\
6 & -5 & 6 & 	6 & 6 & 6 & 	6 & 6 & 6 & 		6 & 6 & 6 & 6 \\
-1 & 1 & 0 & 	-1 & -1 & -1 & 	-1 & -1 & -1 & 		-1 & -1 & -1 & -1 \\
1 & -1 & 1 & 	2 & 1 & 1 & 	1 & 1 & 1 & 		1 & 1 & 1 & 1 \\
5 & -5 & 5 & 	5 & 6 & 5 & 	5 & 5 & 5 & 		5 & 5 & 5 & 5 \\
1 & -1 & 1 & 	1 & 1 & 2 & 	1 & 1 & 1 & 		1 & 1 & 1 & 1 \\
-1 & 1 & -1 & 	-1 & -1 & -1 & 	0 & 1 & -1 & 		-1 & -1 & -1 & -1 \\
1 & -1 & 1 & 	1 & 1 & 1 & 	1 & 2 & 1 & 		1 & 1 & 1 & 1 \\
-1 & 1 & -1 & 	-1 & -1 & -1 & 	-1 & -1 & 0 & 		-1 & -1 & -1 & -1 \\
1 & -1 & 1 & 	1 & 1 & 1 & 	1 & 1 & 1 & 		2 & 1 & 1 & 1 \\
-1 & 1 & -1 & 	-1 & -1 & -1 & 	-1 & -1 & -1 & 		-1 & 0 & -1 & -1 \\
1 & -1 & 1 & 	1 & 1 & 1 & 	1 & 1 & 1 & 		1 & 1 & 2 & 1 \\
-1 & 1 & -1 & 	-1 & -1 & -1 & 	-1 & -1 & -1 & 		-1 & -1 & -1 & 0 \\
\end{pmatrix}
$$

\medskip

\begin{lemma}\label{lem:Fwind-tree}
 The Frobenius functions for the Dehn twists $D_{ne}$ and $D_{nw}$ have zero average drift.
\end{lemma}

\begin{proof}
	The Dehn twist $D_{ne}$ preserves the two north-east cylinders in the unfolded surface in \Cref{fig:fd2} (left), and each of these cylinders cover half of the area of each quarter of the unfolded surface.
The horizontal and vertical automorphism exchanging these quarter act on the horizontal and vertical component of the Frobenius function as:
$$
\begin{cases}
    (F \circ \tau_h)_h = -F_h \\
     (F \circ \tau_h)_v = F_v
\end{cases}
\quad \text{ and } \quad
\begin{cases}
    (F \circ \tau_v)_h = F_h \\
     (F \circ \tau_v)_v = -F_v
\end{cases}
.
$$
Thus its integral on the surface is zero. The argument for the north-west Dehn twist $D_v$ is the same; it preserves the two north-west cylinders in the unfolded surface in \Cref{fig:fd2} (right).
\end{proof}

\subsection{Ergodic properties}

Although the main result of this paper gives rational ergodicity of $\phi_t$ (with rates), we can use the more classical method of establishing the essential value $1$ (for $\Z$-extensions, and for $\Z^d$ extensions we need a basis of $\Z^d$ as essential values) for a first return map of this flow.

A key obstruction for ergodicity is when the Frobenius fonction is a $\psi$-coboundary, \textit{i.e.} when there exists a measurable function $g: X \to \mathbb Z$ such that $F = g \circ \psi - g$.
In particular, we show in this section that the Frobenius function is not a coboundary.

To simplify the exposition, rotating the coordinate axes, we can consider the vertical flow on a surface endowed with a pseudo-Anosov automorphism contracting the vertical direction and expanding the horizontal.

Let us choose a horizontal segment $I$ in the surface $\Xcomp$ and $\widetilde I$ the union of lifts of $I$ in $\Xcover$.
If $T:I \to I$ and $\widetilde T: \widetilde I \to \widetilde I$ are the first return maps of the vertical linear flow respectively on $I$ and $\widetilde I$,
then there exists a map $f:I \to \Z^d$
such that we can express $\widetilde T$
as a skew-product
\[
	\widetilde T(x, \nb) = \left(T(x), \nb + f(x)\right)
\]
where $\widetilde I$ is identified with $I \times \Z$.
Notice that $T$ and $\widetilde T$ are ergodic if and only if the linear flow respectively on $\Xcomp$ and $\Xcover$ are ergodic.

The relevant object to study the orbits by $\widetilde T$ is then the induced cocycle for $f$ defined for $k \in \Z$ by
\[
	f_k(x) = f(x) + \dots + f(T^{k-1}x)
.\]
The ergodicity of these two maps can be linked with the following concept.
\begin{definition}\label{def:ess}
We call $e \in \bZ^d$ an essential value of (the induced cocycle by) $f$ if for every measurable set $K$ of positive measure, there exists $k \in \Z$ such that
$K \cap T^{-k}(K) \cap \{ x \in \Xcomp :  f_k(x) = e\}$ has positive measure.
\end{definition}

Note that $0 \in \Z^d$ is always an essential value.
The set of all finite essential values associated to $f$ is denoted by $\Ess_f$;
it forms a subgroup of  $\bZ^d$ and it follows from \cite{S77} that the skew product $\widetilde T$ is ergodic if and only if $\bZ^d = \Ess_f$.
Also, the map is recurrent if $0$ can be obtained as essential value using elements $k \in \Z \setminus \{0\} $ (Lebesgue measure $\Leb$ is infinite on $\Xcover$, so recurrence does not immediately follow from the invariance of $\Leb$.)\\

We represent the given translation surface as zippered rectangles, defining the linear flow as a suspension flow of an interval exchange transformation (see for an introduction \cite{Y} or \cite{Zorich} from which \Cref{zippered} is taken).

\begin{figure}[htbp]
    \centering
    \includegraphics[width=0.7\textwidth, height=0.25\textheight]{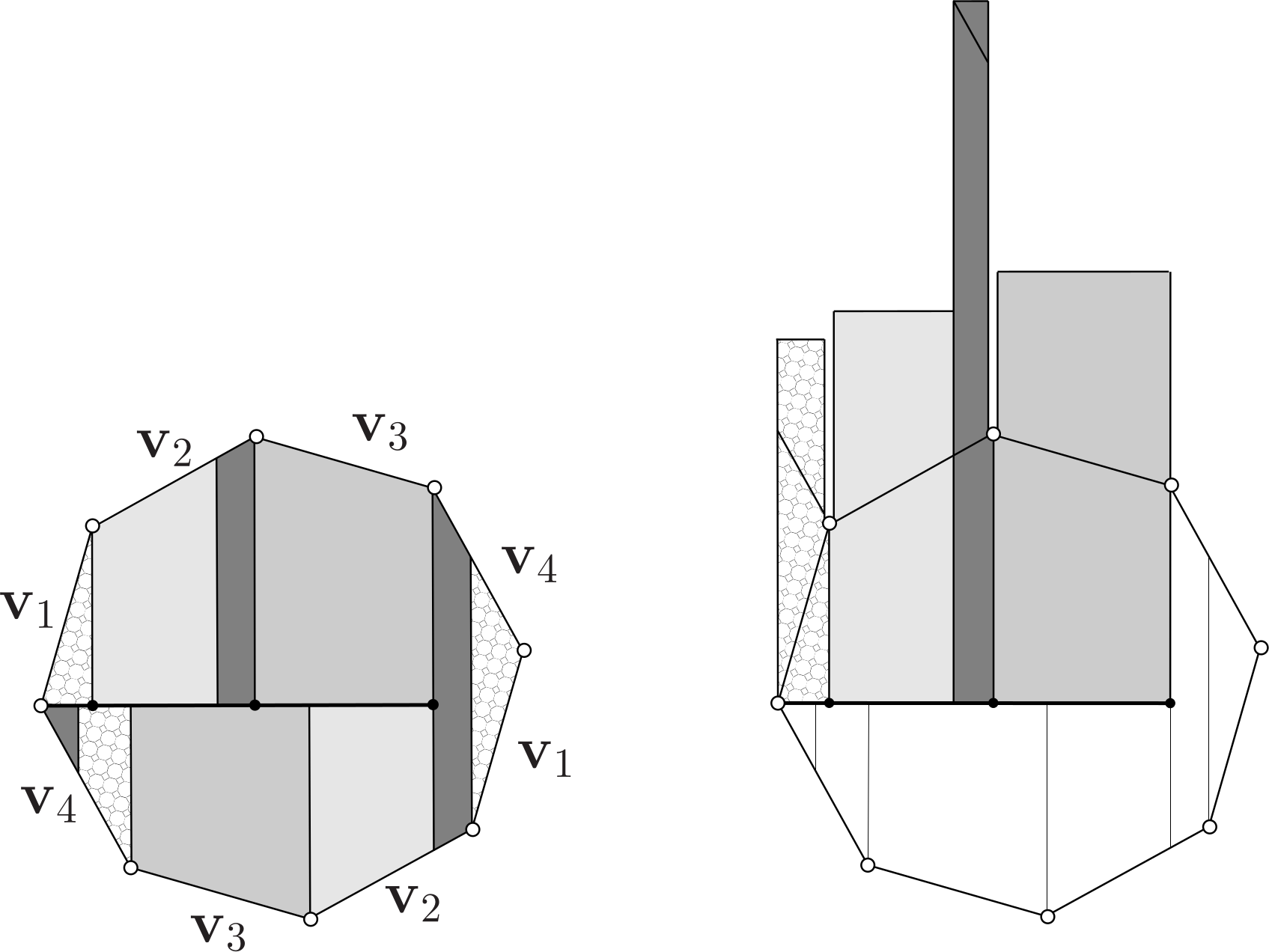}
    \caption{From a suspension flow over an interval exchange transformation to a zippered rectangles representation}
    \label{zippered}
\end{figure}

This representation defines a fundamental domain which is a union of vertical rectangles and such that the singularity of the surface are in their vertical sides.
We choose $\xi$ to be constant in the interior of this domain.
This domain has the nice feature that if there is an embedded rectangle in the surfaces $\Xcomp$ with vertices  $x, y$ at the bottom and $\phi_t(x), \phi_t(y)$ at the top then $\xi(x)-\xi(y) = \xi\left(\phi_t(x)\right) - \xi\left(\phi_t(y)\right)$.

\begin{thm}\label{thm:auto-ergo}
If $\Xcover$ is endowed with a lifted linear pseudo-Anosov automorphism $\psicover$ then the linear flow in the stable and unstable directions of the corresponding matrix is ergodic.
\end{thm}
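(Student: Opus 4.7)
The plan is to apply the essential-value criterion (\Cref{def:ess}) to the first-return skew product of the linear flow.

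First, I would represent $\Xcomp$ by zippered rectangles adapted to the stable/unstable foliations of $\psicomp$, with the linear flow in the stable direction realized as a suspension over an interval exchange transformation $T : I \to I$ on a horizontal transversal $I$. The lift to $\Xcover$ is the suspension of the skew product $\widetilde T(x,\nb) = (Tx, \nb + f(x))$ on $\widetilde I = I \times \Z^d$, where $f$ records the change in the $\xi$-coordinate between consecutive returns. Ergodicity of $\phi_t$ on $\Xcover$ is equivalent to ergodicity of $\widetilde T$, and by Schmidt's theorem \cite{S77} this reduces to showing $\Ess_f = \Z^d$. Ergodicity of $T$ on $\Xcomp$ itself is not an issue: the direction is pseudo-Anosov-periodic, so unique ergodicity in the base follows from the Veech alternative.

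Second, I would exploit the self-similarity given by $\psicover$. Since $\psicomp$ is linear pseudo-Anosov with stable direction equal to the flow direction, a suitable power of $\psicomp$ induces a Rauzy--Veech renormalization of $T$ with expansion factor $|\lambda|^{-1}$: the image $\psicomp(I)$ is a shorter transversal on which the first-return map is a rescaled copy of $T$. Because $\psicover$ commutes with the deck transformations $\Delta_\nb$ and the zippered-rectangles representation is chosen so that $\xi$ is constant on the interior of the fundamental domain, this renormalization is compatible with the $\Z^d$-skew-product structure: the cocycle for the renormalized IET is an explicit substitution on $f$ whose increments are ergodic sums $F_n$ of the Frobenius function along $\psicomp$-orbits.

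Third, to exhibit any $e \in \Z^d$ as an essential value, I would fix a positive-measure set $K \subset \widetilde I$, pass to a density point $(x_0,\nb_0)$, and find a small rectangle $R \subset K$ contained inside a Rokhlin tower of the renormalized IET. Applying $\psicover^{-n}$ expands $R$ in the unstable direction so that it stretches across many fundamental domains of the cover; because of the identity $\xi(x) - \xi(y) = \xi(\phi_t x) - \xi(\phi_t y)$ on embedded rectangles, the geometric displacement produced this way can be read off as ergodic sums $F_k$. For the staircase and wind-tree examples the linear involutions $\sigma$, respectively $\tau_h, \tau_v$, give enough explicit control on $F_k$ to match any prescribed value $e$ on a positive-measure subset of $K \cap T^{-k} K$, yielding $e \in \Ess_f$. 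The unstable direction is treated identically by applying the same argument to $\psicover^{-1}$.

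The main obstacle is the last step: showing $\Ess_f$ is all of $\Z^d$ rather than a proper sublattice. This is exactly the non-coboundary statement for $F$ promised in the introduction, and it is what forces the cover to be "as large as possible". In the explicit examples this is handled by direct computation on the homology generators $\gamma_1,\dots,\gamma_d$ together with the involution symmetries (which, beyond giving zero average drift, prescribe parity constraints that a hypothetical transfer function would have to satisfy and cannot); in the stated generality it relies on the primitivity of the $\gamma_i$ in $H_1(\Xcomp,\Sigma,\Z)$ and on $\zeta \circ \psicomp_* = \zeta$ from \Cref{lem:psi_lift}, which together ensure the renormalization substitution has no proper invariant sublattice.
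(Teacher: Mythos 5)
Your framework --- suspension over an IET, $\Z^d$-skew product with cocycle $f$, Schmidt's criterion $\Ess_f = \Z^d$, and exploiting the $\psicover$-renormalization --- coincides with the paper's. But the heart of the proof, actually producing enough essential values, is not carried out. You correctly flag it as ``the main obstacle'' and even name the right inputs (primitivity of the $\gamma_i$ and $\zeta\circ\psicomp_* = \zeta$), yet in the end you defer to ``explicit control via involutions'' in the examples and to the unproved assertion that the ``renormalization substitution has no proper invariant sublattice'' in general. That last statement is never made precise, there is no argument for why it holds, and no argument for why it would imply $\Ess_f = \Z^d$. You also blur two different cocycles: Schmidt's criterion concerns the return-map cocycle $f_k$ (the $\xi$-displacement along flow segments), whereas $F_k$ is the Birkhoff sum of the Frobenius function along $\psicomp$-orbits; ``the geometric displacement produced this way can be read off as ergodic sums $F_k$'' is not correct as written.

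The paper closes the gap with a concrete generating set. Each vertical rectangle of the zippered-rectangles representation defines a loop in $H_1(\Xcomp\setminus\Sigma,\Z)$ (flow from bottom to top, close along the transversal $I$), and these loops form a basis dual, for the intersection form, to the polygon basis of $H_1(\Xcomp,\Sigma,\Z)$. The $\xi$-shift $e$ of such a rectangle equals $\zeta$ of its homology class; since $\zeta$ is surjective, the finitely many values $e$ obtained this way generate $\Z^d$. Each is then shown directly to be an essential value: take an embedded rectangle $\mathcal R$ of height $T$ so narrow that \emph{every} point of $\mathcal R$ has $\xi$-shift $e$ over flow-time $T$, and push it forward by $\psicomp^n$. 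Because $\psicover$ commutes with deck transformations, $\psicomp^n\mathcal R$ still has $\xi$-shift $e$ over the rescaled time $\lambda^n T$; because $\psicomp$ contracts the flow direction and the flow is uniquely ergodic, $\psicomp^n\mathcal R$ eventually occupies a uniform proportion $\delta$ of any test rectangle while $\lambda^n T \to 0$, so the flow does not leave that test rectangle. Hence each such $e$ lies in $\Ess_f$; since $\Ess_f$ is a subgroup of $\Z^d$ containing a generating set, it is all of $\Z^d$. The two ideas missing from your sketch are precisely this homology-duality construction of a generating set, and the equidistribution argument that replaces the vague ``no invariant sublattice'' claim.
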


\begin{proof}
	To any rectangle in this representation, on can associate a homology class by closing the curve going from bottom to top with a piece of the interval $I$.
	These homology classes form a basis of $H_1(X \setminus \Sigma, \Z)$ which is dual for the intersection form to the basis of $H_1(X, \Sigma, \Z)$ associated to the polygonal representation.
	Let $e \in \Z^d$ be the associated shift in $\xi$ for the flow from bottom to top of a rectangle; it is equal to the value of $\zeta$ on the homology class.
	Thus such values $e$ generate $\Z^d$ since the homology class of rectangles form a basis of $H_1(X \setminus \Sigma, \Z)$ and $\zeta$ is surjective, see \cite[Section 4.5]{Y}.
	We will now prove that $e$ is an essential value.

	It suffices to show that there exists $\delta >0$ such that for any arbitrary rectangle in $\Xcomp$, there exists a $\delta$ proportion of points in this rectangle such that $\phi_T(x)$ is in the rectangle and $\xi(\phi_T(x)) - \xi(x) = e$.\\

	When applying the pseudo-Anosov automorphism (or its inverse) dilating in the vertical direction by $\lambda^{-1}$, one gets an interval exchange on the contracted interval $I' \subset I$ by a factor $\lambda$.
	The heights of the rectangles are multiplied by $\lambda$ and the flow visits several of the initial rectangle of the zippered rectangle decomposition over $I$ before returning to $I'$.
	The rectangles over $I'$ can then be decomposed along their height by the rectangles they visit, this is usually called Rokhlin towers.
	This decomposition enables us to follow the intersection number defining $\xi$.\\

	Assume that, after we apply the automorphism $n_0$ times, the contracted interval $I^{(n_0)}$ is contained a single interval $I_0$ of $I$.
	Let $T_0$ be the height of the rectangle above $I_0$.
	Assume $x \in I^{(n_0)}$ is in the image of the rectangle whose shift is $e$, if $T_1 = \lambda^{-n_0} T_e$ is the return time of the rectangle above $x$ in $I^{(n_0)}$.
	We have $\xi(\phi_{T_1+t}(x)) - \xi(\phi_t(x)) = \xi(\phi_{T_1}(x)) - \xi(x)$ for all $0 \le t \le T_0$.

	Consider $\mathcal R$ the image of the rectangle over $I$ which associated shift is $e$ after $n+n_0$ iterations.
	Then by the previous remark, if we consider the subrectangle $\mathcal R_0$ of $\mathcal R$ with the same base but with height $\lambda^{-n} T_0$, all points $x \in \mathcal R_0$ satisfy $\xi(\phi_{T_1}(x)) - \xi(x) = e$.
	Then, for an arbitrary rectangle, if $\delta$ is half of its measure, by unique ergodicity of the flow, if we take $n$ large enough $\mathcal R_0$ intersects at least a proportion $\delta$ of the rectangle, and for all these points, we have $\xi(\phi_{T_1} \left( x \right)) - \xi(x) = e$.
	Thus $e$ is an essential value.
\end{proof}

\begin{corollary}\label{cor:noco}
	The Frobenius function $F$ is not a coboundary.
\end{corollary}

\begin{proof}
Assume by contradiction that $F = g \circ \psicomp - g$
	for some measurable function $g$. As $F$ is constant on a finite partition into cylinder sets w.r.t.\ the Markov partition, $F$ is H\"older continuous in the symbolic metric.
	By Livshits regularity (see \cite[Theorem 19.2.1]{KH}, which is stated for manifolds, but works in metric space too), $g$ is also H\"older continuous in symbolic metric, and therefore
	bounded.
%
	Take $\cR$ a rectangle of positive measure in $\Xcover$. For each $x' \in \cR$ and $n \in \Z$
	$$
	\xi(\psicover^n x') - \xi(x') = \sum_{j=0}^{n-1} F \circ \psicomp^j(x)  =
	\sum_{j=0}^{n-1} g \circ \psicomp^{j+1}(x)  - g \circ \psicomp^j(x)
	= g \circ \psicomp^n(x)-g(x)
	\leq 2\| g \|_\infty.
	$$
	The images of $\psicover$ contains larger and larger sections of the flow that remains in a compact set.
	Thus $\bigcup_{n \in  \N} \psicover^n \cR$ is a compact set of positive measure which is invariant by the flow.
	Hence the flow is not ergodic.
\end{proof}

\section{Ergodic integrals and (weak) rational ergodicity via Local Limit Laws}\label{sec:limitlaws}

We are interested in (weak) rational ergodicity (with optimal rates) of a translation flow $\phi_t$ defined on a $\Z^d$-cover $\Xcover$, $d\in\{1,2\}$, of the compact translation surface $\Xcomp$ that satisfies certain abstract assumptions. Throughout the entire paper, we let $d\in\{1,2\}$.
The main results of this section, Theorems~\ref{thm:f} and \ref{thm:rateswrd},
are a generalization of~\cite[Theorems 3.2 and 4.3]{ADDS}. The precise error rates for $\Z^d$-covers, $d=1,2$, are new. As far as we are aware, the treatment of $\Z^2$ covers is completely new.
The setting we require is as follows:

\begin{itemize}
\item[(H1)] Let $\Xcomp$ be a compact, two dimensional, surface and let $\Xcover$ be a $\Z^d$-cover with projection $p:\Xcover \to \Xcomp$ that is invariant
under deck-transformations $p \circ \deck_{\nb} = p$.
We assume that there exists a linear pseudo-Anosov automorphism $\psicover:\Xcover\to\Xcover$
on the $\Z^d$-cover $\Xcover$ that renormalizes the translation flow $\phi_t$ in the stable direction, that is $\psicover \circ \phi_t=\phi_{\lambda\,t}\circ\psicover$
for some $\lambda\in (0,1)$.\footnote{If the stable eigenvalue is negative, we take $\psicover^2$ instead.}

\item[(H2)] The linear pseudo-Anosov automorphism $\psicover$ commutes with the deck transformations,
i.e., $\psicover \circ \deck_\nb = \deck_\nb \circ \psicover$ for all $\nb \in \Z^d$ and
$\psicomp = p \circ \psicover \circ p^{-1}:\Xcomp \to \Xcomp$ is well-defined.

\item[(H3)]
Upon the choice of a bounded fundamental domain $\fund$ (i.e.,
$\Xcover$ is the disjoint union $\bigsqcup_{\nb \in \Z^d} \deck_{\nb}(\fund)$),
we define $\xi:\Xcover \to \Z^d$ to be the $\Z^d$ component of $x \in \Xcover$,
via $\xi(x) = \nb$  if $x \in \deck_{\nb}(\fund)$.
We consider
$\psicover$ as the lift of a pseudo-Anosov automorphism $\psicomp: \Xcomp\to\Xcomp$
defined via
\[
\psicover(x,\nb)=(\psicomp(x),\nb+F(x)),\qquad x\in\Xcomp,\nb \in \Z^d,
\]
where $F(x) = \xi \circ \psicover(x')-\xi(x')$, defined independently of a choice of $x' \in p^{-1}(x)$, is called the {\em Frobenius function}.
We assume that $\int_{\Xcomp} F \diff\Leb = 0$ ({\em no drift condition}) and that $F:\Xcomp \to \Z^d$ is {\bf not} a coboundary, i.e.,
$F \neq g \circ \psicomp - g$ for any $g:\Xcomp \to \Z^d$.
\end{itemize}

\noindent
The Lebesgue measure $\Leb$ is invariant, for both the finite and the infinite measure preserving automorphism, $\psicomp$ and $\psicover$.

 \begin{rmk}
 The requirement that $\psicomp$ is (a  two dimensional)  linear automorphism can be  relaxed.  The reason this simplification is that it allows us to work with simpler anisotropic Banach spaces as described in \Cref{sec:banach} below.
 \end{rmk}

\begin{rmk}
In Sections~\ref{sec:staircases}
and~\ref{sec:wind-tree}, we give examples  where these  hypotheses apply for $d = 1,2$, respectively.
That the Frobenius function
has zero integral, but is not a coboundary was shown in \Cref{cor:noco}.
\end{rmk}

As expected (and clarified in Subsection~\ref{subsec:opllt}),
the Central Limit Theorem (CLT) for the ergodic sum $F_{\KK} := \sum_{j=0}^{\KK-1} F \circ \psi^j$ holds. That is,
\begin{align}\label{eq:clt}
\frac{F_{\KK}}{\sqrt{\KK}}\implies \chi,\text{ as } \KK\to\infty,
\end{align}
where $\implies$ stands for convergence in distribution and $\chi$ is a Gaussian random variable with mean $0$ (here we use the no drift condition in (H3) and covariance matrix
$\Sigma^2$. Here $\Sigma^2$ is a symmetric,
non-degenerate, $d\times d$ matrix
$\Sigma^2 = \sum_{j\in\Z} \int_{\Xcomp} F^T\otimes F\circ \psicomp^j\, \diff\Leb$ (with $F^T$ the transpose of $F$),
and $\Sigma$ will be its unique symmetric positive-definite square root.
For $d=1$, the matrix $\Sigma^2 = \sigma^2$ is a scalar.
The speed of mixing of $\psicomp$ ensures that the above sum converges.
The non-degeneracy of $\Sigma^2$ is ensured because $F$ is not a coboundary, see (H3).
%
%

We are interested in a simple expression of the ergodic integral $\int_0^T \Gcover \circ \phi_t\, \diff t$ where $\Gcover \in C^1(\Xcover)$ is compactly supported.
Hence $G( \cdot ,r) := \Gcover \cdot \1_{\deck_r(\fund)}$ is non-zero for at most
finitely many $r \in \Z^d$ (regardless of the exact choice of the fundamental domain $\fund$), and we can write the ergodic integral as the sum
of finitely many integrals $\int_0^T (\Gcover \cdot \1_{\deck_r(\fund)}) \circ \phi_t\, \diff t$, accordingly.

\subsection{Main results}\label{subsecc:mainres}

Let $\Sigma^2$ be the covariance matrix in~\eqref{eq:clt}.
For $d=1$, we write $\Sigma^2=\sigma^2$.  Recall that $\lambda \in (0,1)$ is the stable eigenvalue of $\psicover$.

\begin{thm}\label{thm:f}
Assume (H1)--(H3).
Let $\Gcover \in C^1(\Xcover)$ be a compactly supported real function.
Choose $K \in \N$ so large that $\psicover^{\KK}$ maps the flow-line between $x' \in p^{-1}(x)$ and $\phi_T(x')$ into a single copy of the fundamental domain (this amounts to $K \approx  \log^* T := \lceil \log_{\lambda^{-1}} T \rceil$).

\begin{enumerate}
	\item[I.]
	Suppose $d = 1$. Then there exist real bounded functions $g_{k,j}$ so that for all $N\ge 1$,
	\begin{align*}
	\int_0^{T} \Gcover \circ \phi_t(x)\, \diff t &=\frac{\int_{\Xcover} \Gcover\, \diff\Leb}{\sigma \sqrt{2\pi}}
	\cdot e^{-\frac{\xi\left(\psicover^{\KK}(x)\right)^2}{ 2 \sigma^2 \KK } } \, \frac{T}{\sqrt{\KK}}  \\
	&\quad \times \left( 1 +
	\, \sum_{k=1}^N \frac{1}{\KK^k}  \sum_{j=0}^{2k}
	g_{k,j}(x)\xi(\psicover^{\KK}(x))^{2k-j} + \ O\left(\frac{1}{\KK^{N+1}} \right) \right)
	\quad \text{ as } T \to \infty.
	\end{align*}

	\item[II.]
	Suppose $d = 2$. Then there exist real bounded functions $g_1:X \to \R^2$, $g_2:X \to GL(\R,2)$ and a real constant $g'_2$, so that as $T \to \infty$,
	\begin{align*}
	  \int_0^{T} \Gcover \circ \phi_t(x)\, \diff t &=
	  \frac{\int_{\Xcover} \Gcover\, \diff\Leb}{2\pi\sqrt{\det \Sigma^2}}
e^{- \frac12 \langle \Sigma^{-1} \frac{\xi(\psicover^{\KK}(x))}{\sqrt{\KK}}, \Sigma^{-1} \frac{\xi(\psicover^{\KK}(x))}{\sqrt{\KK}} \rangle  }\frac{T}{\KK} \\
& \quad \times \left(1+\frac{\langle g_1(x), \xi(\psicover^{\KK}(x')) \rangle }{\KK}+\frac{\langle \xi(\psicover^{\KK}(x)), g_2(x) \xi(\psicover^{\KK}(x)) \rangle + g_2'}{\KK^2}
+O\left(\frac{1}{\KK^3}\right)\right).
       \end{align*}
\end{enumerate}
\end{thm}

\begin{rmk} The functions $g_{k,j}$ in the case $d=1$, and $g_1, g_2$
in the case $d=2$ are described precisely inside the proof.
For $d=2$, we can also go higher in the expansion, but since the calculations are tedious, we omit this.

The assumption that $G$ is real valued (compactly supported) can be relaxed to $G$ (compactly supported) taking values in $\C$ or even $\C^d$. We would still need to separate the (real or imaginary) components, and for $\C^d$, the
 vector valued functions $g_{k,j}$ need to be adjusted, which is a tedious exercise, even for $d=1$.
 \end{rmk}

 Using \Cref{thm:f} we obtain  \emph{expansion in weak rational ergodicity for `good' functions}.

  \begin{thm}\label{thm:rateswrd}
    Assume the setup of \Cref{thm:f}, and let $\fund \subset \Xcover$ be a fundamental domain.
    Let $\chi \simeq {\mathcal N}(0,\Sigma^2)$ be a $d=1,2$-dimensional Gaussian random variable.
  \begin{enumerate}[(i)]
  \item \label{thm:itema}
  Suppose $d=1$.
  Then, there exist real constants $d_{k,j}$ so that for all $N\ge 1$,
	\begin{align*}
	\int_{\fund} \int_0^{T} \Gcover \circ \phi_t(x)\, \diff t \, \diff\Leb &=\frac{\int_{\Xcover}\Gcover\, \diff\Leb}{\sigma\sqrt{2\pi}}\frac{T}{\sqrt{\KK}} \left( \E(e^{-\frac{\chi^2}{2}})
	+
	\, \sum_{k=1}^N \sum_{j=0}^{2k}
	\frac{d_{k,j}}{\sqrt{\KK}^{2k+j}}  + \ O\left(\frac{1}{\KK^{N+1}}\right) \right).
	\end{align*}

  \item \label{thm:itemb}
  Suppose $d=2$.
  Then, there are real constants $d_1, d_2$ so that
	\begin{align*}
	 \int_{\fund} \int_0^{T} \Gcover \circ \phi_t(x)\, \diff t \, \diff\Leb &=   \frac{\int_{\Xcover}\Gcover\, \diff\Leb}{\sqrt{\det\Sigma^2}}\frac{T}{\KK}
	 \left(\E(e^{-\frac{\chi^2}{2}}) + \frac{d_1}{\KK}+\frac{d_2}{\KK^2}
+O\left(\frac{1}{\KK^3}\right)\right).
	\end{align*}
  \end{enumerate}
 \end{thm}

 \begin{rmk}\label{rmk:genwr}
  Weak rational ergodicity without rates follows immediately since
  convergence for all $L^1(\Xcover)$-functions is an immediate consequence of \Cref{thm:f}
  and the ratio ergodic theorem, see~\cite{Aaronson}.
 \end{rmk}

 \subsection{Strategy of the proof}\label{subsec:strat}

Following the approach in~\cite{GL}, in particular~\cite[Equation (2.4) and (2.6)]{GL}, (see also~\cite[Equation (4)]{BuSi} for the same expression),
we first exploit the commutation relation \eqref{eq:renorm}, which is part of our assumption (H1).
For any $\vcover \in \mathcal{C}^0(\Xcover)$, for every $x \in \Xcover$, for all $T>0$ and all integers $K \geq 0$, we compute that
\begin{align*}
\int_0^T \vcover  \circ \phi_t(x)\, \diff t &= \int_0^T \vcover  \circ \psicover^{-K} \circ \psicover^{\KK} \circ \phi_t(x) \, \diff t = \frac{1}{\lambda^{k} }\int_0^T \vcover  \circ \psicover^{-K} \circ \phi_{\lambda^{\KK}r} \circ \psicover^{\KK} (x) \, \lambda^{\KK}\, \diff r \\
&=\frac{1}{\lambda^{\KK}}\int_0^{\lambda^{\KK}T} \vcover  \circ \psicover^{-K} \circ \phi_{r} \circ \psicover^{\KK} (x) \, \diff  r.
\end{align*}
Let $\Lcover : L^1(\Xcover)\to L^1(\Xcover)$ be the transfer operator associated with $\psicover$ defined via
 $\int_{\Xcover} \Lcover \vcover \, \wcover \, \diff\Leb=\int_{\Xcover} \vcover \, \wcover \circ \psicover \, \diff\Leb$ with $\vcover \in L^1(\Xcover)$ and $\wcover \in L^\infty(\Xcover)$.
 Since the map $\psicover$ is invertible and preserves $\Leb$,
 we also have $\Lcover \vcover  = \vcover  \circ \psicover^{-1}$.
 Thus,
 \begin{align}\label{eq:obs}
  \int_0^T \vcover  \circ \phi_t(x)\, \diff t &=\frac{1}{\lambda^{\KK}}\int_0^{\lambda^{k}T}  \Lcover^{\KK} \vcover \circ \phi_{r} \circ \psicover^{\KK} (x) \, \diff r.
 \end{align}

 The strategy is to relate the behaviour of  $\Lcover^{\KK}$ with an operator (or conditional) local limit theorem in terms of the transfer operator $\Lcomp:L^1(\Xcomp)\to L^1(\Xcomp)$ for the automorphism $\psicomp$ (defined via $\int_{\Xcomp} \Lcomp v\, w\, \diff\Leb =\int_{\Xcomp} v\, w\circ \psicomp\,  \diff\Leb$ with $v\in L^1(\Xcomp)$ and $w\in L^\infty(\Xcomp)$).
 Also we define the {\em twisted transfer operator} as
 $$
 \Lcomp_u v = \Lcomp(e^{iu F} v), \qquad u \in (-\pi,\pi]^d,
$$
where $uF$ indicates the scalar product if $u$ and $F$ are vectors.
The operator local limit theorem we are after is in the sense of~\cite[Section 6]{AD01}.

The first lemma below makes the relation between $\Lcover^{\KK} \vcover$, for compactly supported functions $\vcover$, and $\Lcomp_u ^{\KK} v$ precise.
Recall (from (H1)) that $p:\Xcover\to X$.

 \begin{lemma}\label{lem:rel}
 Let $v\in L^1(\Xcomp)$ and let $\vcover(\cdot)= v( \cdot , r) =v\circ p\in L^1(\Xcover)$ be the lifted version supported on $\{\xi=r\}$, $r\in\Z^d$.
 For all $\ell, r\in\Z^d$, for all $K\ge 1$ and for all $x\in X$,
 \[
 \Lcover^{\KK}  v (x,r)\, \1_{\{\xi=\ell\}} = \Lcomp^{\KK} v(x) \1_{\{F_{\KK}(x)=\ell-r\}}
= \frac{1}{(2\pi)^d} \int_{[-\pi,\pi]^d} e^{-iu(\ell-r)} \Lcomp_u^{\KK} v(x) \, \diff u.
  \]
  for ergodic sums $F_{\KK} := \sum_{j=0}^{K-1} F \circ \psicover^j$.
 \end{lemma}

 \begin{proof} Let $v\in L^1(\Xcomp)$, $w\in L^\infty(\Xcomp)$ and $v(\cdot ,r) = v\circ p$,
 $w( \cdot,\ell) = w \circ p$ be the versions supported on $\{ \xi = r\}$
 and $\{ \xi = \ell \}$, respectively.
 Compute that
\begin{align*}
\int_{\Xcover} \Lcover^{\KK}  v (x,r) \1_{\{\xi=\ell\}} w(x,\ell) \, \diff\Leb(x)
&=\int_{\Xcover} \Lcover^{\KK} (\1_{\{\Xcomp\times \{r\}\}} v)\,(\1_{\{\Xcomp\times \{\ell\}\}} w) \, \diff\Leb \\
&=\int_{\Xcover} (\1_{\{\Xcomp \times \{r\}\}} v(x,r))\,  (\1_{\{\Xcomp\times \{\ell\}\}} w(x,\ell))\circ \psicover^{\KK}(x)\, \diff\Leb \\
&= \int_{\Xcomp} v \, w \circ \psi^{\KK} \, \1_{\{F_{\KK}=\ell-r\}}\, \diff\Leb \\
 &=\int_{\Xcomp} \Lcomp^{\KK}( v \1_{\{F_{\KK}=\ell-r\}})\, w(x) \, \diff\Leb,
 \end{align*}
which gives the first equality in the statement.
 We can write the indicator function $\1_{\{F_{\KK}=\ell-r\}} =  \frac{1}{(2\pi)^d} \int_{[-\pi,\pi]^d} e^{iu (F_{\KK} - (\ell-r))} \, du$, so
\begin{eqnarray*}
\int_{\Xcomp} \Lcomp^{\KK}( v(x) \1_{\{F_{\KK}=\ell-r\}})\, w(x) \, \diff\Leb
&=&  \frac{1}{(2\pi)^d} \int_{\Xcomp}  \int_{[-\pi,\pi]^d} \Lcomp^{\KK}\left( v e^{iu \left( F_{\KK}-(\ell-r) \right)}\right)(x) \, du \ w(x) \,  \diff\Leb  \\
&=&  \frac{1}{(2\pi)^d} \int_{[-\pi,\pi]^d} \int_{\Xcomp} e^{-iu(\ell-r)} \Lcomp_u^{\KK}v \, w \, \diff\Leb \, \diff u.
\end{eqnarray*}
 ~\end{proof}

 Given Lemma~\ref{lem:rel}, our task comes down to obtain a precise expansion of $\Lcomp^{\KK} v(x) \1_{\{F_{\KK}(x)=\ell-r\}}$ in powers of $K$ and combined it with~\eqref{eq:obs}. As explained in subsection~\ref{subsec:opllt} below, $\ell-r$ in $\Lcomp^{\KK} v(x) \1_{\{F_{\KK}(x)=\ell-r\}}$ will be replaced by $\xi\left(\psicover^{\KK}(x')\right)$, which in the end, will give the form of Theorem~\ref{thm:f}.

 \subsection{An (operator) local limit theorem (LLT) for $F_{\KK}$}\label{subsec:opllt}

 \Cref{prop:opllt} below is an asymptotic expansion operator LLT (in the sense of~\cite[Section 6]{AD01}) for the ergodic sums $F_{\KK}$.
 The expansion in \Cref{prop:opllt} is a key ingredient in the proof of our main results Theorems~\ref{thm:f} and \ref{thm:rateswrd}.
 We recall that \Cref{thm:f} is a version of \Cref{thm:main1}, including a precise statement for $d=2$, while Theorem~\ref{thm:rateswrd} gives optimal rates in a form of weak rational ergodicity for $C^1$ functions.

 We first recall some facts on the spectral properties of $\Lcomp$ and its twisted version $\Lcomp_uf =\Lcomp(e^{iuF} f)$, $u\in\R^d$.
 Since $\psicomp:\Xcomp\to\Xcomp$ is an invertible map, we need adequate, anisotropic Banach spaces on which the corresponding transfer operator $\Lcomp$ can act.  The details on Banach spaces we shall use are deferred to \Cref{sec:banach}. The first proposition summarizes
 all that we need to use in terms of Banach spaces (regardless the particular of these spaces) to prove the main results.

\begin{prop}\label{prop:op}
  \begin{itemize}
  \item[(a)]There exist anisotropic Banach spaces $\cB,\cB_w$
  so that $C^1(\Xcomp)\subset\cB\subset \cB_w\subset C^1(\Xcomp)^*$
  where $C^1(\Xcomp)^*$ is the (topological) dual of $C^1(X)$.
  The transfer operator $\Lcomp$ acts continuously  on $\cB$ and $\cB_w$.
  Moreover, $\Lcomp$ is quasicompact\footnote{the precise terminology is recalled and specified in \Cref{sec:banach}}
 when viewed as operator from $\cB$ to  $\cB$. In particular, $1$ is an isolated, simple eigenvalue in the spectrum of $\Lcomp$.

  \item[(b)]
  The derivatives $\frac{d^k}{du^k}\Lcomp_u f$
  are linear operators on $\cB$  with operator norm of $O(\|F\|_\infty^k)$.

  \item[(c)] There exist $\delta>0$ and a family of simple eigenvalues $\lambda_u$ that is analytic in $u$ for
  all $|u|<\delta$.
  Also, for all $|u|<\delta$ and $n\ge 1$,
\[ \Lcomp_u^n=\lambda_u^n\Pi_u+Q_u^n, \]
  where $\Pi_u$ is the family of spectral projections associated with $\lambda_u$ with $\Pi_0 v= \int_{\Xcomp} v\, \diff\Leb$,
  $\Pi_u, Q_u$ are analytic when regarded as (family of)  operators acting on $\cB$,
  $\Pi_uQ_u=Q_u\Pi_u$ and $\|Q_u^n\|_{\cB}\le \delta_0^n$ for some $\delta_0<1$.

  \item[(d)] There exists $\delta_1\in (0,1)$ so that $\|\Lcomp_u^n\|_{\cB}\le \delta_1^n$ for all $n\ge 1$.
  \end{itemize}
\end{prop}

The proof of Proposition~\ref{prop:op}  is provided in Section~\ref{sec:banach} (the headers of the subsections indicate which item of the proposition is proved). Proposition~\ref{prop:op} is known in various settings similar to the one here (see, for instance, the survey paper~\cite{Demers18} and references therein).

We recall that throughout, $d\in\{1,2\}$.
Throughout this section we let $\Pi_0^{(j)}, \lambda_0^{(j)}$ denote the $j$-th derivative in $u$
of $\Pi_u,\lambda_u$ evaluated at $u=0$. From here onward,
given $u\in\R^d$
we write $u^{\otimes j}:=u\otimes \cdot \otimes u$ for the $j$-fold tensor product of $u$ with itself. We define the $*$-product $u * v$ on column vectors $u \in \C^d$ and $v \in \C^{d'}$,
where we assume that $d'$ is a multiple of $d$, or vice versa.
The meaning of these type of products is clarified in Appendix~\ref{sec:tesnot}.
\begin{rmk}\label{rmk:samed}
 If $d=d'$ then $u * v= uv = \sum_{i=1}^du_iv_i$ is the usual scalar product.
\end{rmk}

By Proposition~\ref{prop:op}(c), for $|u|<\delta$ and for any $v\in\cB$,
\begin{align}\label{eq:lll}
  \Lcomp_u^n v=\lambda_u^n\Pi_u v+Q_u^n v=\left(\sum_{m=0}^\infty\lambda_0^{(m)} * u^{\otimes m}\right)\times \left(\sum_{j=0}^\infty \Pi_0^{(j)} v * u^{\otimes m}\right)+Q_u^n v.
\end{align}
\begin{rmk}\label{rmk:meaning}
 Throughout we restrict to $v$ taking real values. In this case we note that
when $d=1$  $\Pi_0^{(j)}v, \lambda_0^{(j)}$ are scalars, and when $d=2$,
are column vectors with $2^j$ entries. A similar statement holds for $\lambda_0^{(j)}, j\ge 0$. This is in the sense of the terminology clarified in Appendix~\ref{sec:tesnot}.
Clearly, when $j=0$, $\Pi_0^{(0)}v=\Pi_0 v=\int_{\Xcomp} v\, \diff\Leb$, $\lambda_0^{(0)}=\lambda_0=1$ are scalars.
\end{rmk}

Recall $u\in \R^d$, $d\in\{1,2\}$. A classical (not necessarily short) argument which dates back to~\cite{Nag, GH} (see also~\cite{AD01, Gousurv}), shows that provided that
$\lambda_u$ is twice differentiable at $0$ (so much weaker than analyticity of $\lambda$ ensured by Proposition~\ref{prop:op}(c)),
then
\begin{align}\label{eq:lambda-1}
1- \lambda_u =\frac 12\Sigma^2 * u^{\otimes 2}(1+o(1))
 =\frac 12\langle\Sigma u,\Sigma u\rangle(1+o(1)) \text{
 or equivalently }
 \lambda_u= e^{-\frac 12\Sigma^2 * u^{\otimes 2} (1+o(1)) }
\end{align}
where $\langle\cdot,\cdot\rangle$ is the usual scalar product and $\Sigma$ is the unique positive definite symmetric
square root of the
non-degenerate $d\times d$ covariance  matrix introduced in~\eqref{eq:clt}.

An immediate
consequence of~\eqref{eq:lambda-1} and~\eqref{eq:lll} is that $\E\left(e^{iu  F_{\KK}}\right)=e^{-\frac 12\Sigma^2 * u^{\otimes 2}(1+o(1))}$, as $u\to 0$. A classical argument  based on the Levy continuity theorem (see, for instance, the survey~\cite{Gousurv}) shows
that CLT stated in~\eqref{eq:clt} holds.

In the setup of the current section, a refined version of the CLT~\eqref{eq:clt} holds, namely a Local Limit Theorem (LLT).
This means that for $M\in\Z^d$,
\begin{align}\label{eq:llt1}
 \Leb\left(F_{\KK}(x)=M\right)
 &=\frac{1}{\left(2\pi\sqrt{\KK}\right)^d}
 \Phi\left(\frac{M}{\sqrt{\KK}}\right) (1+o(1)),\text{ as } K\to\infty,
\end{align}
 where $\Phi$ is the density of the Gaussian random variable $\chi$ in~\eqref{eq:clt}.

In the sequel we shall exploit, and prove, a stronger version of~\eqref{eq:llt1},
namely an operator LLT with precise expansion, as in Proposition~\ref{prop:opllt} below. This type of expansion for LLT is, essentially, contained inside~\cite[Proof of Theorem 3.2]{Pene18}, where different Banach spaces are used.

Before the statement, recall Remark~\ref{rmk:meaning} on the meaning of
 $\Pi_0^{(j)}v$ and $\lambda_0^{(j)}$.
 With the conventions on tensors, see~\Cref{sec:tesnot}
 and specifically \eqref{eq:Sigmau},
 we have
 $A_u := \frac{1}{2}\Sigma^2 * u^{\otimes 2}=\frac12 \langle \Sigma u,\Sigma u\rangle$.
 Recalling~\eqref{eq:lambda-1},
 and using the analyticity of $\lambda_u$,
 we can write
 \begin{align}\label{eq:li}
  \lambda_u^n= e^{-\frac{n}{2}\langle\Sigma u, \Sigma u\rangle}\left(1+\sum_{m=1}^\infty\frac{1}{m!}\left(\frac{\lambda^n}{A^n}\right)_0^{(m)}* u^{\otimes m}\right),
\end{align}
where $\left(\frac{\lambda^n}{A^n}\right)_0^{(m)}$ is the $m$-th derivative of
$\frac{\lambda_u^n}{A_u^n}$ evaluated at $0$, which is a column vector with $d^m$ entries, $d=1,2$.

Recall that $F$ takes values in $\Z^d$, $d=1,2$.
 \begin{prop}\label{prop:opllt}
Let $v\in C^1(X)$.
 Then

 \begin{enumerate}[(a)]
 \item \label{prop:itema}
	 If $v$ is a \textbf{real} function then
 $\Pi_0^{(j)}v$ is a column vector with $d^j$ entries which are real if $j$ is even and purely imaginary entries if $j$ is odd.

 Moreover, $\left(\frac{\lambda}{A}\right)_0^{(m)}$ is a column vector with $d^m$ real entries if $m$ is even and $\left(\frac{\lambda}{A}\right)_0^{(m)}$ is is a column vector with $d^m$  purely imaginary entries if $m$ is odd.

 \item \label{prop:itemb}
	  Let $\delta$, $\delta_0$ and $\delta_1$ be as in \Cref{prop:op}(c). Set $\delta_2=\max\{\delta_0,\delta_1\}$.
  Let $\Sigma^2$ be the covariance matrix in~\eqref{eq:clt}.
  Then for all $x\in X$ and for all $\ell, r\in\Z^d$,
\begin{align*}
& \Lcomp^{\KK} v(x) \1_{\{F_{\KK}(x)=\ell-r\}}+E_K v(x)\\
&=\frac{1}{\left(2\pi\sqrt{\KK}\right)^d} \int_{\left[-\delta\sqrt{\KK}, \delta\sqrt{\KK}\right]^d} e^{-iu\frac{\ell-r}{\sqrt{\KK}}}e^{-\frac{\langle \Sigma u, \Sigma u\rangle}{2}}\left(1+\sum_{m=1}^\infty\frac{1}{m!}\left(\frac{\lambda^{\KK}}{A^{\KK}}\right)_0^{(m)}* \frac{u^{\otimes m}}{\KK^{m/2}}\right)\\
&\quad\quad\quad\quad\quad\quad\quad\times \left(\int_{\Xcomp} v\, \diff\Leb+\sum_{j=1}^\infty \frac{1}{j!}\Pi_0^{(j)}v(x) * \frac{u^{\otimes j}} {\KK^{j/2}}\right)\, du,
 \end{align*}
 where $E_K$ is an operator acting on $\cB$ so that $ \|E_K v\|_{\cB}\le C\delta_2^{\KK} \|v\|_{C^1}$
 and so that $\left|\int_{\Xcomp} E_K v\, \diff\Leb \right| \le C' \delta_2^{\KK} \|v\|_{C^1}$ for some $C, C'>0$.
\end{enumerate}
 \end{prop}

 The proof of \Cref{prop:opllt}(a) is deferred to \Cref{sec:remainingproofs}. Here we provide the argument for \Cref{prop:opllt}(b).

 \begin{proof}[Proof of \Cref{prop:opllt}(b)] Recall $d=1,2$.
 By \Cref{lem:rel} (second equality there) and \Cref{prop:op}(c) and (d),
\begin{align}\label{eq:bbbb}
\Lcomp^{\KK} v(x) \1_{\{F_{\KK}(x)=\ell-r\} }
&=  \frac{1}{(2\pi)^d}\int_{[-\pi,\pi]^d} e^{-iu(\ell-r)} \Lcomp_u^{\KK} v(x) \, du \nonumber \\
\nonumber
&=  \frac{1}{(2\pi)^d} \int_{[-\delta,\delta]^d} e^{-iu(\ell-r)} (\lambda^{\KK}_u \Pi_u + Q^{\KK}_u) v(x) \, du +O(\delta_1^{\KK})\\
&=  \frac{1}{(2\pi)^d} \int_{[-\delta,\delta]^d} e^{-iu(\ell-r)} \lambda^{\KK}_u \Pi_u v(x) \, du
+O(\delta_2^{\KK}).
\end{align}
 By equation~\eqref{eq:li}, $\lambda_u^{\KK}= e^{-\frac{\KK}{2}\langle\Sigma u, \Sigma u\rangle}\left(1+\sum_{j=1}^\infty\frac{1}{j!}\left(\frac{\lambda^{\KK}}{A^{\KK}}\right)_0^{(j)}* u^{\otimes j}\right)$.
 We already know that
 $\Pi_u v=\int_{\Xcomp} v\, \diff\Leb+\sum_{j=1}^\infty \Pi_0^{(j)} v * u^{\otimes j}$.
 Putting these two expressions together and using a change of coordinates $u\to \frac{u}{\sqrt{\KK}^d}$ in~\eqref{eq:bbbb} gives the conclusion.
 \end{proof}

 To clarify that integral in \Cref{prop:opllt}(b) leads to a real scalar (when $v$ takes real values), we rewrite it in a more transparent way and
 record this as a lemma.

 \begin{lemma}\label{lem:consopllt} Assume the setup  of \Cref{prop:opllt}.
  Let $\Phi$ is the density of the Gaussian random variable $\chi$ in~\eqref{eq:clt}.  Let $d=1,2$ and set $I_j(\Sigma,L)=\int_{\R^d} e^{-iu L}e^{-\frac{\langle \Sigma u, \Sigma u\rangle}{2}} u^{\otimes j}\, du$ for $j \geq 1$ and $L \in \R^d$.

  Then for any $n\ge 1$,
  \begin{align*}
    \Lcomp^{\KK} v(x) &\1_{\{F_{\KK}(x)=\ell-r\}}=\frac{\int_{\Xcomp} v\, \diff\Leb}{\left(2\pi\sqrt{\KK}\right)^d}
 \Phi\left(\frac{M}{\sqrt{\KK}}\right) +\sum_{j=1}^{N} \frac{1}{j!} \frac{C_j(v)} {\KK^{(j+d)/2}}+E_{K,N} v(x,r)
  \end{align*}
for real bounded functions
  $C_1(v)=I_1\left(\Sigma,\frac{\ell-r}{\sqrt{\KK}}\right)*\Pi_0^{(j)}v(x)
   +I_1\left(\Sigma, \frac{\ell-r}{\sqrt{\KK}}\right)*\left(\frac{\lambda}{A}\right)_0^{(j)}$,
  \begin{align*}
  C_j(v)= & I_j\left(\Sigma,\frac{\ell-r}{\sqrt{\KK}}\right)*\Pi_0^{(j)}v(x)
   +I_j\left(\Sigma, \frac{\ell-r}{\sqrt{\KK}}\right)*\left(\frac{\lambda}{A}\right)_0^{(j)}\\
   &+I_j\left(\Sigma,\frac{\ell-r}{\sqrt{\KK}}\right)*\left(
  \sum_{r_1+r_2=j}\frac{1}{r_1! r_2!}\Pi_0^{(r_1)}v(x)\otimes \left(\frac{\lambda}{A}\right)_0^{(r_2)}\right) \quad  \text{ for } 2 \le j\le N,
  \end{align*}
 and $E_{K,N}$ is an operator acting on $\cB$ so that $\|E_{K, N} v\|_{\cB} = o\left(\KK^{-(N+d)/2}  \right) \| v \|_{C^1}$
 and $\left|\int_{\Xcomp} E_{K,N}v\diff\Leb \right| = o\left(\KK^{-(N+d)/2}  \right)$.
\end{lemma}

\begin{proof} Truncating each sum inside the integral in \Cref{prop:opllt}(b)  at $N\ge 1$ and using the information on the operator $E_{K,N}$, we obtain

\begin{align*}
\Lcomp^{\KK} v(x) \1_{\{F_{\KK}(x)=\ell-r\}}
&=\frac{1}{\left(2\pi\sqrt{\KK}\right)^d} \int_{\left[-\delta\sqrt{\KK}, \delta\sqrt{\KK}\right]^d} e^{-iu\frac{\ell-r}{\sqrt{\KK}}}e^{-\frac{\langle \Sigma u, \Sigma u\rangle}{2}}\left(1+\sum_{m=1}^N\frac{1}{m!}\left(\frac{\lambda^{\KK}}{A^{\KK}}\right)_0^{(m)}* \frac{u^{\otimes m}}{\KK^{m/2}}\right)\\
&\quad\quad\quad\quad\quad\quad\quad\times \left(\int_{\Xcomp} v\, \diff\Leb+\sum_{j=1}^N \frac{1}{j!}\Pi_0^{(j)}v(x) * \frac{u^{\otimes j}} {\KK^{j/2}}\right)\, du +E_{K,N} v(x),
 \end{align*}
where $E_{K,N}$  is an operator as in the statement of the corollary.

The density of the Gaussian can be written as $\Phi(L)=\frac{1}{(2\pi)^d}\int_{\R^d} e^{-iu L}e^{-\frac{\langle \Sigma u, \Sigma u\rangle}{2}}\, du$, for a vector $L \in \R^d$,  due to the Fourier inversion formula (i.e., the inverse Fourier transform of the characteristic function).
Note that
\[
\Phi^{(j)}(L)=(-i)^{j}\int_{\R^d} e^{-iu L}e^{-\frac{\langle \Sigma u, \Sigma u\rangle}{2}} u^{\otimes j}\, du.
\]
So, $i^j \Phi^{(j)}(L)=I_j(\Sigma, L)$.

Applying the formula for $\Phi^{(j)}$ inside the integral above with $L=\frac{\ell-r}{\sqrt{\KK}}\in\R^d$, $d=1,2$, we obtain
   \begin{align*}
\Lcomp^{\KK} & v(x) \1_{\{F_{\KK}(x)=\ell-r\}}
=\frac{\int_{\Xcomp} v\, \diff\Leb}{\left(2\pi\sqrt{\KK}\right)^d}
 \Phi\left(\frac{\ell-r}{\sqrt{\KK}}\right) +\frac{\int_{\Xcomp} v\, \diff\Leb}{\left(2\pi\sqrt{\KK}\right)^d}\sum_{m=1}^N\frac{1}{m!}\frac{i^m}{\KK^{m/2}} \Phi^{(m)}\left(\frac{\ell-r}{\sqrt{\KK}}\right)* \left(\frac{\lambda^{\KK}}{A^{\KK}}\right)_0^{(m)}\\
& +\frac{1}{\left(2\pi\sqrt{\KK}\right)^d}\sum_{j=1}^N \frac{1}{j!}\frac{i^j} {\KK^{j/2}}\Phi^{(m)}\left(\frac{\ell-r}{\sqrt{\KK}}\right)*\Pi_0^{(j)}v(x)\\
&+\frac{1}{\left(2\pi\sqrt{\KK}\right)^d}\sum_{m=1}^N\sum_{j=1}^N \frac{1}{m!j!}\frac{i^{m+j}} {\KK^{(m+j)/2}}\Phi^{(m+j)}\left(\frac{\ell-r}{\sqrt{\KK}}\right)*\left(\Pi_0^{(j)}v(x)\otimes \left(\frac{\lambda^{\KK}}{A^{\KK}}\right)_0^{(m)}\right)
+E_{K,N} v(x).
 \end{align*}
 Note that
 \begin{align*}
  \sum_{m=1}^N&\sum_{j=1}^N \frac{1}{m!j!}\frac{i^{m+j}} {\KK^{(m+j)/2}}\Phi^{(m+j)}\left(\frac{\ell-r}{\sqrt{\KK}}\right)*\Pi_0^{(j)}v(x)\otimes \left(\frac{\lambda^{\KK}}{A^{\KK}}\right)_0^{(m)}\\
  &=\sum_{j=2}^{2N} \frac{1}{j!}\frac{i^{j}} {\KK^{j/2}}\Phi^{(j)}\left(\frac{\ell-r}{\sqrt{\KK}}\right)*\left(
  \sum_{r_1+r_2=j}\frac{1}{r_1! r_2!}\Pi_0^{(r_1)}v(x)\otimes \left(\frac{\lambda^{\KK}}{A^{\KK}}\right)_0^{(r_2)}\right)\\
  &=\sum_{j=2}^{N} \frac{1}{j!}\frac{i^{j}} {\KK^{j/2}}\Phi^{(j)}\left(\frac{\ell-r}{\sqrt{\KK}}\right)*\left(
  \sum_{r_1+r_2=j}\frac{1}{r_1! r_2!}\Pi_0^{(r_1)}v(x)\otimes \left(\frac{\lambda^{\KK}}{A^{\KK}}\right)_0^{(r_2)}\right)+O\left(\frac{1}{\KK^{(N+1)/2}}\right).
 \end{align*}
 Recall that $i^j \Phi^{(j)}\left(\left(\frac{\ell-r}{\sqrt{\KK}}\right)\right)=I_j\left(\Sigma, \frac{\ell-r}{\sqrt{\KK}}\right)$.
Writing the above three sums in a single sum gives the expression of $C_j(v)$, as in the statement of the lemma (after multiplication with $(2\pi\sqrt{\KK})^{-d}$).

It remains to justify that $C_j(v)$ are real bounded functions.

\textbf{$C_j(v)$ are real or complex functions.} Recall $d=1,2$.
Recall from Remark~\ref{rmk:samed}
that when the dimension of (tensor) vectors are the same, the operation $*$
gives a scalar. By \Cref{lem:integrals} with $L=\frac{\ell-r}{\sqrt{\KK}}\in\R^d$, $d=1,2$ (in \Cref{sec:integrals}),  the integrals $I_j$ are column vectors with $d^j$ entries. We already know from Remark~\ref{rmk:meaning} that $\Pi_0^{(j)}$ and $\left(\frac{\lambda^{\KK}}{A^{\KK}}\right)_0^{(j)}$ are column vectors with $d^j$ entries. So the operation $*$ between $I_j$
and $\Pi_0^{(j)}$, and $\left(\frac{\lambda^{\KK}}{A^{\KK}}\right)_0^{(j)}$ gives a bounded real or complex function. The same applies to $I_j\left(\frac{\ell-r}{\sqrt{\KK}}\right)*\left(\Pi_0^{(r_1)}v(x)\otimes \left(\frac{\lambda^{\KK}}{A^{\KK}}\right)_0^{(r_2)}\right)$, since $r_1+r_2=j$.

\textbf{$C_j(v)$ are real bounded functions.}
By \Cref{prop:opllt}(a), $\Pi_0^{(j)}$ is a column vector with $d^j$ real entries if $j$ is even and with purely imaginary if $j$ is odd. The same applies to $\left(\frac{\lambda^{\KK}}{A^{\KK}}\right)_0^{(j)}$.
Finally, by \Cref{lem:integrals} in \Cref{sec:integrals},
$I_j$ have real entries if $j$ is even and purely imaginary entries if $j$ is odd. The boundedness of these functions follows from the analyticity of $\Pi_t$ and $\lambda_t$, ensured by Proposition~\ref{prop:op} (b), (c).
\end{proof}

The asymptotic expansion in the usual LLT follows immediately.
 That is, taking $\ell-r=M\in\Z^d$, $v\equiv 1$ in \Cref{lem:consopllt},
 and integrating over the space, we obtain that for all $N \ge 1$,
 \begin{align}\label{eq:clasLLT}
 \Leb\left(F_{\KK}(x)=M\right)
 &=\frac{1}{\left(2\pi\sqrt{\KK}\right)^d}
 \Phi\left(\frac{M}{\sqrt{\KK}}\right) +\sum_{j=1}^{N} \frac{C_j} {\KK^{(j+d)/2}}+o\left( \frac{C_{N+1}}{\KK^{(N+d)/2} } \right), \quad \text{ as } K\to\infty,
 \end{align}
 where $C_j$ are real constants.

The expansion in~\eqref{eq:clasLLT} allows us to record a technical lemma that will play an important role in the proof of \Cref{thm:rateswrd} below. Before the statement we recall that $F_{\KK}(x)=\xi \circ \psicover^{\KK}(x')-\xi(x')$
 for $x' \in p^{-1}(x)$.

 \begin{lemma}\label{lemma:crtech}

\begin{enumerate}[(i)]
\item \label{lem:crtech_i}
Let $q \ge 0$ be an integer and $f_q:\Xcomp \times \R^d \to \R$,
$f_q(x,w)=e^{-\frac12\langle w,w\rangle} g(x) * w^{\otimes q}$
for a bounded function $g:\Xcomp \to \R^{d^q}$.
Then there exist real constants $C_{d,q}$ and $d_{j,q}$, so that for any $N \ge 1$,
\begin{align*}
\int_{\Xcover} f_q\left(p(x'), \frac{\xi\left(\psicover^{\KK}(x')\right)}{\sqrt{K}}\right)\, \diff\Leb
=C_{d,q} +\sum_{j=1}^N \frac{d_{j,q}}{\KK^{j/2}} +o(\KK^{-N/2}),\qquad \text{ as } K\to\infty.
\end{align*}

\item \label{lem:crtech_ii}
Let $f:\R^d\to\R$, $f(w)= e^{-\frac12\langle w,w\rangle}$
and let $\chi$ be the $d$-dimensional Gaussian introduced in~\eqref{eq:clt}.
Then, there exist real constants $d_j$, so that for any $N \ge 1$,
\begin{align*}
\int_{\Xcover} f\left(\frac{\xi\left(\psicover^{\KK}(x')\right)}{\sqrt{\KK}}\right)\, \diff\Leb
=\E(f(\chi))+\sum_{j=1}^N \frac{d_j}{\KK^{j / 2}} +o(\KK^{-N / 2}),\quad \text{ as } K\to\infty,
\end{align*}
 \end{enumerate}
 \end{lemma}

 \begin{proof} \textbf{\Cref{lem:crtech_i}}.
 Since $F_{\KK}(x)=\xi \circ \psicover^{\KK}(x')-\xi(x')$
 for $x' \in p^{-1}(x)$,
 \begin{align*}
 \int_{\Xcover} f_q\left(p(x'), \frac{\xi\left(\psicover^{\KK}(x')\right)}{  \sqrt{\KK}}\right)\, \diff\Leb
 &=\sum_{M\in\Z^d} \int_{\{F_{\KK}\circ p =M \}}   f_q\left(p(x'), \frac{\xi\left(\psicover^{\KK}(x')\right)}{ \sqrt{\KK}}\right)\, \diff\Leb \\
 &= \sum_{M\in\Z^d} a_M e^{-\frac{1}{2\KK} \langle M,M \rangle} \ \Leb\left(F_{\KK} = M\right),
 \end{align*}
 where $a_M$ is chosen by the intermediate value theorem for integrals.
 Due to the boundedness of $g$ and exponential factor,
 $a_M = O(M^q)$, and hence finite  for each $M$, even though
 $\Leb(\{ x' \in \Xcover : F_{\KK} \circ p = M\}) = \infty$.

 This together with~\eqref{eq:clasLLT} gives
\begin{align}\label{eq:use1}
 \int_{\Xcover} f_q\left(p(x'), \frac{\xi\left(\psicover^{\KK}(x')\right)}{\sqrt{\KK}}\right)\, \diff\Leb
 =   \frac{1}{\KK^{d/2}}
\sum_{M\in\Z^d} a_M e^{-\frac{1}{2\KK} \langle M,M \rangle} \  H\left(\frac{M}{\sqrt{\KK}}\right)
\quad \text{ as } K \to \infty,
 \end{align}
 for
 $$
 H\left(\frac{M}{\sqrt{\KK}}\right) =
  (2\pi)^{-d} \Phi\left(\frac{M}{\sqrt{\KK}}\right)+\sum_{j=1}^{n}
 \frac{C_j}{\KK^{j/2}} + o\left(\frac{C_{n+1}}{\KK^{n/2}} \right),
 \qquad \text{ for } C_j \in \R.
 $$
Next, for each $M \in \Z^d$, define functions  $a_M:Q(M) \to \R$ on
the unit cube $Q(M)$ centered at $M \in \Z^2$ in such a way that $\int_{Q(M)}
a_M(w)e^{-\frac{1}{2\KK} \langle w, w \rangle} \  H\left(\frac{w}{\sqrt{\KK}}\right) dw = a_M e^{-\frac{1}{2\KK} \langle M,M \rangle} \  H\left(\frac{M}{\sqrt{\KK}}\right)$, and set $a = \sum_{M \in \Z^d} \tilde a_M \cdot \1_{Q(M)}$.
Then
 \begin{eqnarray*}
 \sum_{M\in\Z^d} a_M e^{-\frac{1}{2\KK} \langle M,M \rangle} \  H\left(\frac{M}{\sqrt{\KK}}\right)
 &=& \int_{\R^d} a(w) e^{-\frac{1}{2\KK} \langle w,w \rangle} H\left(\frac{w}{\sqrt{\KK}}\right)\, dw \\
 &=& K^{d/2} \int_{\R^d} a(v\sqrt{\KK}) e^{-\frac{1}{2} \langle v,v \rangle} H(v)\, dv,
 \end{eqnarray*}
where we used the change of coordinates $v = w/\sqrt{\KK}$. Since
 $\int_{\R^d} a(w\sqrt{\KK}) e^{-\frac{1}{2} \langle v,v \rangle} H(v)\, dv < \infty$ due to the exponential factor, the sum scales as
 $\KK^{d/2}$. Insert this estimate into~\eqref{eq:use1} to find constants $C_{q,d}$ and $d_{j,d} \in \R$ such that~\cref{lem:crtech_i} holds.

\textbf{\Cref{lem:crtech_ii}}
We just need to argue that the first term, that is $C_{d,0}$ in \cref{lem:crtech_i}, is exactly $\E( f_0(\chi))$. Apart from this constant the statement is as in \cref{lem:crtech_i} for $f_q$  with $q=0$ and $g\equiv 1$.
One could proceed via an exact calculation (using, for instance, the Euler-Maclaurin formula),
but a quicker way is to recall~\eqref{eq:clasLLT} and note that by the Portmanteau Theorem,
\begin{align*}
 \int_{\Xcover} f\left(\frac{\xi\left(\psicover^{\KK}(x')\right)}{\sqrt{\KK}}\right)\, \diff\Leb \to \E(f(\chi)),\quad \text{ as } K\to\infty.
\end{align*}
~\end{proof}

We record an immediate consequence of \Cref{lem:consopllt} that will be instrumental in the proof of \Cref{thm:f}.  Recall that $F_{\KK}(x)=\xi \circ \psicover^{\KK}(x')-\xi(x')\in\Z^d$, $d=1,2$,
 for $x' \in p^{-1}(x)$.

 \begin{corollary}\label{cor:opllt}
 Adopt the notation of \Cref{lem:consopllt}.
Let $G\in C^1(X)$ and $x' \in p^{-1}(x)$.
 Then
\[
 \Lcomp^{\KK} G(x) \1_{\{F_{\KK}(x)=\xi(\psicover^{\KK}(x'))\}}
 =\frac{\int_{\Xcomp} G\, \diff\Leb}{\left(2\pi\sqrt{\KK}\right)^d}
 I_0\left(\frac{\xi(\psicover^{\KK}(x'))}{\sqrt{\KK}}\right) +\sum_{j=1}^{N} \frac{1}{j!}\frac{C_j(G, \xi(\psicover^{\KK}(x'))} {\KK^{(j+d)/2}}+E_{K,N} G(x),
 \]
  for real bounded functions
  $C_1(v)=I_1\left(\Sigma,\frac{\ell-r}{\sqrt{\KK}}\right)*\Pi_0^{(j)}G
   +I_1\left(\Sigma, \frac{\ell-r}{\sqrt{\KK}}\right)*\left(\frac{\lambda}{A}\right)_0^{(j)}$,
  \begin{align*}
   C_j(v)=&I_j\left(\Sigma,\frac{\ell-r}{\sqrt{\KK}}\right)*\Pi_0^{(j)}G
   +I_j\left(\Sigma, \frac{\ell-r}{\sqrt{\KK}}\right)*\left(\frac{\lambda^{\KK}}{A^{\KK}}\right)_0^{(j)}\\
   &+I_j\left(\Sigma,\frac{\ell-r}{\sqrt{\KK}}\right)*\left(
  \sum_{r_1+r_2=j}\frac{1}{r_1! r_2!}\Pi_0^{(r_1)}v(x,r)\otimes \left(\frac{\lambda^{\KK}}{A^{\KK}}\right)_0^{(r_2)}\right) \quad \text{ for } 2 \le j \le N,
  \end{align*}
and
the operator norm $\|E_{K, N} G\|_{\cB}= o\left(\KK^{-(N+d)/2} \|G\|_{C^1}\right)$
 and $\left|\int_{\Xcomp} E_{K,N}G\diff\Leb \right| = o\left(\KK^{-(N+d)/2}  \right)$.
 \end{corollary}

    \begin{proof} We want to apply \Cref{lem:consopllt} with $v=G$
    and suitable choice of $\ell,r\in\Z^d$.

    Take $\ell=0$ and $x' \in p^{-1}(x)$ so that $r=-\xi(\psicover^{\KK}(x'))$. To justify this choice, just recall that
$F_{\KK}(x)=\xi \circ \psicover^{\KK}(x')-\xi(x')$. The conclusion follows from \Cref{lem:consopllt} with $G$ instead of $v$ and $\ell-r=\xi(\psicover^{\KK}(x'))$.\end{proof}

\subsection{Proof of the main results}\label{subsec:prm}

Before proceeding to the proof of \Cref{thm:f} we record one more technical lemma. Recall the notation of equation~\eqref{eq:obs}.
The following lemma will be proved in \Cref{sec:EKN}.
\begin{lemma}\label{lemma:EKN}
 Consider the operator $E_{K, N}$ defined in Corollary~\ref{cor:opllt}.
 Then
 \[
  \left|\frac{1}{\lambda^{\KK}}\int_0^{\lambda^{\KK}T} E_{K,N} G\circ \phi_{r} (x_K) \, \diff  r\right|=o\left(\frac{T}{\KK^{(N+d)/2}} \|G\|_{C^1}\right).
 \]
\end{lemma}

We can now proceed to

\begin{proof}[Proof of \Cref{thm:f}]
Let us assume without loss of generality that $\Gcover$ is supported on $\{ \xi = 0\}$. To emphasize that, we write $\Gcover(x) = G(x,0)$, so
$G( \cdot , 0) = G \circ p$ for a unique $G \in C^1(\Xcomp)$ and $\int_{\Xcomp} G \, \diff\Leb = \int_{\Xcover} \Gcover \, \diff\Leb$.
By equation~\eqref{eq:obs} and \Cref{lem:rel} (first equality there with $\ell=0$ and $r=-\xi(\psicover^{\KK}(x'))$),
\begin{align}\label{eq:rn1}
\nonumber \int_0^T G \circ \phi_t(x)\, \diff t &=\frac{1}{\lambda^{\KK}}\int_0^{\lambda^{k}T}  \Lcover^{\KK}(G(x,0))\circ \phi_{r} \circ \psicover^{\KK} (x') \, \diff r\\
  &=\frac{1}{\lambda^{\KK}}\int_0^{\lambda^{k}T} \left(\Lcomp^{\KK} G(x) \1_{\{F_{\KK}(x)=\xi(\psicover^{\KK}(x'))\}}\right) \circ \phi_{r} \circ \psicover^{\KK} (x') \, \diff r.
\end{align}
%
By \Cref{cor:opllt},
\begin{align}\label{eq:pr}
 \Lcomp^{\KK} G(x) \1_{\{F_{\KK}(x)=\xi(\psicover^{\KK}(x'))\}}
 &=\frac{\int_{\Xcomp} G\, \diff\Leb}{\left(2\pi\sqrt{\KK}\right)^d}
 I_0\left(\frac{\xi(\psicover^{\KK}(x'))}{\sqrt{\KK}}\right)+
 \sum_{j=1}^{N} \frac{1}{j!}\frac{I_j\left(\Sigma, \frac{\xi(\psicover^{\KK}(x'))}{\sqrt{\KK}}\right)} {\KK^{(j+d)/2}}*e_{j,G}(x)
 \nonumber \\
 & \qquad\qquad\qquad +o\left(\KK^{-(N+d)/2}  \right),
 \end{align}
 where $e_{j,G}=\left(\Pi_0^{(j)}G+ \left(\frac{\lambda^{\KK}}{A^{\KK}}\right)_0^{(j)}\right)$ and
 \begin{align*}
 e_{j,G}=\left(\Pi_0^{(j)}G+ \left(\frac{\lambda^{\KK}}{A^{\KK}}\right)_0^{(j)}+\sum_{r_1+r_2=j}\frac{1}{r_1!r_2!}\Pi_0^{(r_1)}G\otimes \left(\frac{\lambda^{\KK}}{A^{\KK}}\right)_0^{(r_2)}\right), \quad j\ge 2.
 \end{align*}
are column vectors with $d^j$ entries which are real if $j$ is even and purely imaginary entries if $j$ is odd. Each such entry is bounded,
due to \Cref{prop:op} (b), (c). As in the proof of \Cref{lem:consopllt}, the operation $*$ between
$I_j$ and $e_{j,G}$ produces a bounded function.

 We are left with describing the integrals $I_j\left(\Sigma,\frac{\xi(\psicover^{\KK}(x'))}{\sqrt{\KK}}\right)$ and in the end combining with~\eqref{eq:rn1}.

Recall $d=1,2$ and write
$I_j(\Sigma, L)= \int_{\R^d} e^{ i u L  - \frac{\langle \Sigma u,\Sigma u\rangle}{2}} u^j\, du,
$
with $L=L(x'):=\frac{\xi(\psicover^{\KK}(x'))}{\sqrt{\KK}}$.\\

\textbf{Item I., $d=1$}. Recall that when $d=1$, we write $\Sigma=\sigma$.

\textbf{Describing the integrals $I_j$ and obtaining a close expression of $\Lcomp^{\KK} G(x) \1_{\{F_{\KK}(x)=\xi(\psicover^{\KK}(x'))\}}$.}

From \Cref{lem:integrals} in \Cref{sec:integrals}, we obtain
$I_0(\sigma,L) = \frac{\sqrt{2\pi}}{\sigma} e^{-\frac{ L^2}{2\sigma^2} }$ and
$I_j(\sigma,L) = \frac{1}{\sigma^2} (i L I_{j-1}(\sigma,L) + (j-1) I_{j-2}(\sigma,L))$ for $j \geq 1$.
%
By \Cref{lem:integrals}, $I_j(\sigma, L)$  is real if $j$ is even and purely imaginary if $j$ is odd.
Thus, for real coefficients $c_{p,j}$ we can write
\begin{equation}\label{eq:Ipower}
 I_j\left(\sigma, \frac{\xi(\psicover^{\KK}(x'))}{\sqrt{\KK}}\right)  = i^{j \bmod 2}\cdot \sum_{p=0}^{\lfloor j/2\rfloor} c_{p,j}
  \frac{\xi(\psicover^{\KK}(x))^{2p+(j \bmod 2)}}{\KK^{(2p+(j \bmod 2))/2} } e^{-\frac{\xi\left(\psicover^{\KK}(x')\right)^2}{2\sigma^2\KK}  }.
\end{equation}

Recall from~\eqref{eq:pr} that the bounded functions $e_{j,G}$ are real if $j$ is even and purely imaginary if $j$ is odd.
This combines with $i^{j \bmod 2}$ to get a real coefficient.
Let $f_{k,j} = i^{j \bmod 2} e_{j,G} c_{(2k-j-(j \bmod 2))/2, j}$.
Combining~\eqref{eq:pr} and~\eqref{eq:Ipower},
\begin{eqnarray*}
 I_0 +\sum_{j=1}^N
\frac{i^{j \bmod 2} e_{j,G}(x)} {\KK^{j/2}} I_j &=&
\sum_{j=0}^N \sum_{p=0}^{\lfloor j/2 \rfloor}
c_{p,j} i^{j \bmod 2} e_{j,G}(x)
\frac{\xi(\psicover^{\KK}(x))^{2p+(j \bmod 2)}}{\KK^{(2p+j+(j \bmod 2))/2} } e^{-\frac{\xi\left(\psicover^{2\sigma^2 \KK}(x')\right)^2}{\KK}  }\\[2mm]
&=&
\sum_{k=0}^N \left( \sum_{j=0}^{2k}
f_{k,j}(x) \xi(\psicover^{\KK}(x))^{2k-j} \right)
e^{-\frac{\xi\left(\psicover^{\KK}(x')\right)^2}{2\sigma^2\KK}  }
\left(\frac{1}{\KK}\right)^k +  O\left( \left(\frac{1}{\KK}\right)^{N+1} \right),
\end{eqnarray*}
where we introduced a new summation index $2k = 2p+j+(j \bmod 2)$
and switched the order of the sums.
The terms in the inner brackets can all be computed explicitly.
We just give the first two as illustration:
\[
 I_0 + \frac{e_{1,G}(x)} {\KK^{1/2}} I_1 + \dots =
\left(
\frac{\sqrt{2\pi}}{\sigma}  + \frac{\sqrt{2\pi}}{\sigma^3}
\frac{ e_{2,G}(x) + i e_{1,G}(x) \xi(\psicover^{\KK}(x))}{\KK} + O\left( \frac{1}{\KK^2} \right)
\right)  e^{-\frac{\xi\left(\psicover^{\KK}(x)\right)^2}{2\sigma^2 \KK}  }.
\]
Putting all the above together gives, for any $x \in \Xcomp$ and $x' \in p^{-1}(x)$,
\begin{align*}
 \Lcomp^{\KK} G(x) \1_{\{F_{\KK}(x)=\xi(\psicover^{\KK}(x'))\}} &=\frac{\int_{\Xcomp}G\, \diff\Leb }{\sigma\sqrt{2\pi}}
e^{-\frac{\xi\left(\psicover^{\KK}(x')\right)^2}{ 2 \sigma^2 \KK} } \, \frac{1}{\sqrt{\KK}}\\
&\quad + \frac{ 1 }{\sigma^3 \sqrt{2\pi} }
\left( e_{2,G}(x) + i e_{1,G}(x) \xi(\psicover^{\KK}(x)) \right)
\, e^{- \frac{\xi\left(\psicover^{\KK}(x')\right)^2}{2\sigma^2\KK}  }
\, \frac{1}{(\sqrt{\KK})^3}\\
&\quad + \frac{1}{2\pi}
\, \sum_{k=2}^N \left( \sum_{j=0}^{2k}
f_{k,j}(x) \xi(\psicover^{\KK}(x'))^{2k-j} \right)
\, e^{-\frac{\xi\left(\psicover^{\KK}(x')\right)^2}{2\sigma^2 \KK}  }
\frac{1}{(\sqrt{\KK})^{2k+1}}   \\
&\quad + \ O\left(\frac{1}{(\sqrt{\KK})^{2N+3}}\right),
\end{align*}
for real bounded functions $f_{k,j}$.

\textbf{Concluding the argument in the case $d=1$, combining with~\eqref{eq:rn1}}.
We use the above displayed equation for expression of $\Lcomp^{\KK} G(x) \1_{\{F_{\KK}(x)=\xi(\psicover^{\KK}(x'))\}} $ inside~\eqref{eq:rn1} to get

\begin{align*}
 \int_0^T \Gcover \circ \phi_t(x)\, dt &=\frac{\int_{\Xcover} \Gcover \, \diff\Leb }{\sigma\sqrt{2\pi}}\frac{1}{\sqrt{\KK}}\frac{1}{\lambda^{\KK}}\int_0^{\lambda^{\KK}T}  e^{-\frac{\xi\left(\psicover^{\KK}(x')\right)^2}{ 2 \sigma^2 \KK} }  \circ \phi_{r} \circ \psicover^{\KK} (x') \, d r.\\
 +\ &\frac{1}{2\pi}\sum_{k=1}^N \frac{1}{(\sqrt{\KK})^{2k+1}} \left( \sum_{j=0}^{2k}
 \frac{1}{\lambda^{\KK}}\int_0^{\lambda^{\KK}T}
f_{k,j}(x) \xi(\psicover^{\KK}(x'))^{2k-j} e^{-\frac{\xi\left(\psicover^{\KK}(x')\right)^2}{2\sigma^2 \KK}  }\circ \phi_{r} \circ \psicover^{\KK} (x') \, d r\right)\\
 +\ &  \frac{1}{\lambda^{\KK}}\int_0^{\lambda^{\KK}T} E_{K,N} G\circ \phi_{r} \circ \psicover^{\KK} (x') \, d r ,
\end{align*}
where $ \|E_{K, N}\|=O\left(\frac{1}{(\sqrt{\KK})^{2N+3}}\right)$ and $x'\in p^{-1}(x)$.

Let $x_K=\psicover^{\KK} (x)$. Recall that one main assumption of the theorem we prove here
is that we choose $K \in \N$ so large that $\psicover^{\KK}$ maps the flow-line between $x'$ and $\phi_T(x')$  into a single copy of the fundamental domain.
Thus,
 $\xi\left(\psicover^{\KK}(\phi_r(x_K)))\right) = \xi\left(\psicover^{\KK}(x')\right)$ is constant for all $0 \le r\le \lambda^{\KK}T$. Thus,
\begin{align*}
 \frac{1}{\lambda^{\KK}}\int_0^{\lambda^{\KK}T}  e^{-\frac{\xi\left(\psicover^{\KK}(x')\right)^2}{ 2 \sigma^2 \KK} }  \cdot \phi_{r} \circ \psicover^{\KK} (x) \, d r=
  e^{-\frac{\xi\left(\psicover^{\KK}(x')\right)^2}{ 2 \sigma^2 \KK} }\cdot \left(\frac{1}{\lambda^{\KK}}\int_0^{\lambda^{\KK}T} 1\, dr\right)= e^{-\frac{\xi\left(\psicover^{\KK}(x')\right)^2}{ 2 \sigma^2 \KK} }\cdot T
\end{align*}
By a similar argument,
\begin{align*}
 \frac{1}{\lambda^{\KK}}\int_0^{\lambda^{\KK}T}
f_{k,j}(x) \xi(\psicover^{\KK}(x'))^{2k-j} & e^{-\frac{\xi\left(\psicover^{\KK}(x')\right)^2}{2\sigma^2 \KK}  }\circ \phi_{r} \circ \psicover^{\KK} (x') \, d r \\
= \ &  \xi(\psicover^{\KK}(x'))^{2k-j} e^{-\frac{\xi\left(\psicover^{\KK}(x')\right)^2}{2\sigma^2 \KK}  }\frac{1}{\lambda^{\KK}}\int_0^{\lambda^{\KK}T}
f_{k,j}(\phi_{r}(x_K)) \, d r.
\end{align*}
Since the functions $f_{k j}$ are bounded,
$f_{k,j}^{*}(x) := \frac{1}{\lambda^{\KK}}\int_0^{\lambda^{\KK}T}
f_{k,j}(\phi_{r}(x_K)) \, d r$ are bounded as well.

Thus,
\begin{align*}
 \int_0^T \Gcover \circ \phi_t(x)\, \diff t &=\frac{\int_{\Xcover}G\, \diff\Leb }{\sigma\sqrt{2\pi}}
e^{-\frac{\xi\left(\psicover^{\KK}(x')\right)^2}{ 2 \sigma^2 \KK} } \, \frac{T}{\sqrt{\KK}}\\
&\quad + \frac{1}{2\pi}
\, \sum_{k=2}^N \left( \sum_{j=0}^{2k}
f_{k,j}^{*}(x) \xi(\psicover^{\KK}(x'))^{2k-j} \right)
\, e^{-\frac{\xi\left(\psicover^{\KK}(x')\right)^2}{2\sigma^2 \KK}  }
\frac{T}{(\sqrt{\KK})^{2k+1}}   \\
&\quad + \frac{1}{\lambda^{\KK}}\int_0^{\lambda^{\KK}T} E_{K,N} G\circ \phi_{r} (x_K) \, \diff r.
\end{align*}
By \Cref{lemma:EKN}, $\left|\frac{1}{\lambda^{\KK}}\int_0^{\lambda^{\KK}T} E_{K,N} G\circ \phi_{r} (x_K) \, \diff  r\right|=O\left(\frac{T}{(\sqrt{\KK})^{2N+3}}\|G\|_{C^1}\right)$.
Item I. follows with $g_{k,j}=\frac{f_{k,j}\sigma}{ \int_{\sqrt{2\pi}\Xcomp}G\, \diff\Leb}$.\\

\textbf{Item II, $d=2$}. Similarly to the case $d=1$, we first
want a precise expression of $\Lcomp^{\KK} G(x) \1_{\{F_{\KK}(x)=\xi(\psicover^{\KK}(x'))\}}$.

In this case, both $I_j(\Sigma, L)$ and $e_{j,G}$ in~\eqref{eq:pr}
are column vectors with $2^j$ entries. We already know that the product gives a bounded function.

We diagonalize
$\Sigma = A J A^{-1}$ for $J = \binom{\sigma_1\ \\ 0}{\ 0\  \\ \ \sigma_2}$ for a unitary matrix $A$,
so $A^{-1} = A^*$ is the transpose of $A$. Note that $\sqrt{\det\Sigma^2}=\sigma_1\sigma_2$.

\Cref{lem:integrals} B computes the integrals $I_0, I_1, I_2$ precisely.
In particular, $I_0(\Sigma, L) = \frac{2\pi}{\sigma_1 \sigma_2}
e^{- \frac12 \langle \Sigma^{-1} L, \Sigma^{-1} L \rangle  }$,
so when inserted in \eqref{eq:pr} with $L = \xi(\psicover^{\KK}(x'))/\sqrt{\KK}$ and after the integration $\frac{1}{\lambda^{\KK}} \int_0^{\lambda^{\KK}T}$ to pick up an extra factor $T$, we obtain as constant term
$$
\frac{\int_{\Xcover} \Gcover \diff\Leb}{2\pi \sqrt{\det \Sigma^2}}
\cdot e^{- \frac12 \langle \Sigma^{-1} \frac{\xi(\psicover^{\KK}(x'))}{\sqrt{\KK}},
\Sigma^{-1}\frac{\xi(\psicover^{\KK}(x'))}{\sqrt{\KK}} \rangle }
\frac{T}{\KK}.
$$
Also,
\[
 I_1(\Sigma, L) =
\frac{2\pi i}{\sigma_1 \sigma_2} e^{- \frac12 \langle \Sigma^{-1} L, \Sigma^{-1} L \rangle }
A \begin{pmatrix}
 \frac{1}{\sigma_1^2} (A^*L)_1 \\[1mm]
 \frac{1}{\sigma_2^2} (A^*L)_2
     \end{pmatrix},
\]
and $*$-multiplication with the purely imaginary vector $e_{1,G}$ produces a scalar linear form which we can denote as $\langle g_1(x), \cdot \rangle$. Then applying integration $\frac{1}{\lambda^{\KK}} \int_0^{\lambda^{\KK} T}$ to \eqref{eq:pr} gives as linear term
$$
\frac{\int_{\Xcover} \Gcover \diff\Leb}{2\pi \sqrt{\det \Sigma^2}} \cdot
e^{- \frac12 \langle \Sigma^{-1} \frac{\xi(\psicover^{\KK}(x'))}{\sqrt{\KK}},
\Sigma^{-1}\frac{\xi(\psicover^{\KK}(x'))}{\sqrt{\KK}} \rangle }
\frac{T}{\KK} \frac{\langle g_1(x), \xi(\psicover^{\KK}(x')) \rangle}{\KK}.
$$
For the quadratic term, we have $I_2(\Sigma,L) = \frac{2\pi}{\sigma_1\sigma_2} e^{- \frac12 \langle  \Sigma^{-1} L,  \Sigma^{-1} L \rangle }
(Q(L \otimes L) + Q')$,
$*$-multiplication with the real vector $e_{2,G}$ produces a scalar quadratic form which we can denote as $\langle \cdot , g'_2(x) \cdot \rangle + g'_2(x)$. Replacing $L$ by $\xi(\psicover^{\KK}(x'))/\sqrt{\KK}$ and then applying integration $\frac{1}{\lambda^{\KK}} \int_0^{\lambda^{\KK} T}$ to \eqref{eq:pr} gives as quadratic term
$$
\frac{\int_{\Xcover} \Gcover \diff\Leb}{2\pi \sqrt{\det \Sigma^2}} \cdot
e^{- \frac12 \langle \Sigma^{-1} \frac{\xi(\psicover^{\KK}(x'))}{\sqrt{\KK}},
\Sigma^{-1}\frac{\xi(\psicover^{\KK}(x'))}{\sqrt{\KK}} \rangle }
\frac{T}{\KK} \frac{\langle \xi(\psicover^{\KK}(x')), g_2(x) \xi(\psicover^{\KK}(x')) \rangle + g'_2} {\KK^2},
$$
as required.
\end{proof}

Using \Cref{thm:f} and \Cref{lemma:crtech}, we complete

 \begin{proof}[Proof of~\Cref{thm:rateswrd}]
	 We prove \Cref{thm:itema}.  \Cref{thm:itemb} follows similarly.

  By \Cref{thm:f}, the following holds for bounded functions $g_{k,j}$.
  \begin{align}\label{eq:bl}
\nonumber \int_{\fund}& \int_0^{T}  G \circ \phi_t(x)\, \diff t\, \diff\Leb =\frac{\int_{\Xcover}G\, \diff\Leb}{\sigma \sqrt{2\pi}}\frac{T}{\sqrt{\KK}}
\cdot \int_{\Xcover} e^{-\frac{\xi\left(\psicover^{\KK}(x')\right)^2}{ 2 \sigma^2 \KK} }  \diff\Leb \\
&\quad +
\, \sum_{k=1}^N \frac{T}{(\sqrt{\KK})^{2k+1}} \left( \sum_{j=0}^{2k}
 \int_{\Xcover}g_{k,j}(x)\xi(\psicover^{\KK}(x'))^{2k-j}
\, e^{-\frac{\xi\left(\psicover^{\KK}(x)\right)^2}{2\sigma^2 \KK}  }\, \diff\Leb \right)
+ \ O\left(\frac{T}{(\sqrt{\KK})^{2N+3}}\right).
\end{align}

We want to apply
\Cref{lemma:crtech} with $f_q(x, w)=e^{-\frac{w^2}{2}} w^q g_{k,j, q}(x)$,  with $q\in\{0,\ldots, 2k-j\}$, $w$
replaced by $\frac{\xi\left(\psicover^{\KK}(x')\right)}{\sigma \sqrt{\KK}}$ and $g_{k,j, 0}=1$ and $g_{k,j, q}=g_{k,j}$ for $q\ne 0$.

By \Cref{lemma:crtech} (ii) (so, the case $q=0$),
\[
\int_{\fund}
\, e^{-\frac{\xi\left(\psicover^{\KK}(x')\right)^2}{2\sigma^2 \KK}  }\, \diff\Leb =
 \E(f_0(\chi)) + \sum_{j=1}^{n} \frac{d_j}{K^{j/2}}
 + o\left(\frac{1}{\sqrt{\KK}^n}\right),
\]
for real constants $d_j$.

To deal with the sum in~\eqref{eq:bl}, we apply
\Cref{lemma:crtech}(i) with $f_q(x, w)=e^{-\frac{w^2}{2}} w^q g_{k,j, q}(x)$,  with $q\in\{0,\ldots, 2k-j\}$ described in the last to previous paragraph.
This ensures that the sum $\sum_{j=0}^{2k}$ of the integrals in~\eqref{eq:bl}
convergence to $\sum_{j=0}^{2k}d_{k,j}$  for real constants  $d_{k,j}$.
 \end{proof}

\subsection{Banach spaces and Proof of Proposition~\ref{prop:op}}
\label{sec:banach}

There are several choices in the literature for the Banach spaces we can use,
 see the surveys~\cite[Section 2]{Baladi17} and~\cite{Demers18}.
For the automorphism $\psicomp$ it is convenient to work with a variant of the spaces introduced in~\cite{DemersLiverani08}
(see also~\cite{DZ14} and references therein for generalizations applicable to billiards) applicable to a class of
hyperbolic maps with singularities. A possible alternative choice would be the anisotropic Banach spaces considered in \cite{FGL}, which are a great tool for studying the Ruelle spectrum for general pseudo Anosov maps.
In the current setup we are only interested in the spectral gap of the transfer operator of the above mentioned simple automorphism (along properties of the twist).
This is why we use the spaces in \cite{Demers18}, which among others, allows us to use some facts already established for this class of automorphisms.

We find it convenient to work
with a slight modification of the Banach spaces considered in~\cite{Demers18} for the purpose of obtaining limit
theorem via spectral methods for a general class of baker maps\footnote{The baker map itself is defined as $b(x,y) = (2x \bmod 1, \frac12(y + \lfloor 2x \rfloor))$ on the unit square.}. For a similar (simplified) variation of the spaces in~\cite{DemersLiverani08} of the Banach spaces in~\cite{Demers18} we refer to~\cite{LT16},
which focused on some two-dimensional, non-uniformly hyperbolic versions of Pomeau-Manneville maps.

The automorphism $\psicomp$ resembles a baker map except for the existence of singular (and potentially
and marked) points, see \Cref{sec:homological}.
For a baker map the singularities are given by the set of discontinuity points.
In the setup of $\psicomp$,
we say that a point $s\in\Xcomp$ is singular if the cone angle at $s$ is not $2\pi$.
The difference in the type of singularities and potentially marked points introduces a difference in the class of admissible leaves.
In all other aspects the variant of the Banach spaces in~\cite{Demers18}
remain the same in the set of $\psicomp$. We summarize below the ingredients of these Banach spaces, using the notation of~\cite{Demers18},
as to emphasize that the case of the automorphism $\psicomp$ (regarding the spectral gap for $\Lcomp$) is
one of the easiest possible examples that the spaces introduced in~\cite{DemersLiverani08} can treat.

We remark that Proposition~\ref{prop:op}, of which proof we sketch below, is not new with us, and that our only tasks is collect the statements scattered throughout~\cite{Demers18} and similar papers mentioned below.

\subsubsection{Definitions of Banach spaces}\label{subsec:ban}

Although, the presence of a (natural) Markov partition is not a crucial element in the
construction in~\cite{Demers18} for baker type maps, it does simplify the writing.
The presence of this type of Markov structure
considerably simplifies  the description of admissible leaves.
In particular, it allows us to define admissible leaves
as full unstable segments. For the same reason, that of simplicity, we will take advantage
of the Markov partition $\cP$.

We define the set  $\cW^s$ of \emph{admissible leaves} as the set of stable segments $W$ that exactly stretch across an element $P \in \cP$ such that its (one dimensional) interior is contained in the interior of $P$.
Note that, for any such $W \in \cW^s$, the stable segment $\psicomp^{-1}W$ can be decomposed into a finite union of elements of $\cW^s$.

Any $W \in \cW^s$ has an affine parametrization $\{\chi_W(r) : r \in [0,l]\}$, where $l$ is the length of $W$. For any measurable function $h : W \to \C$, we write
\[
\int_W h \diff\Leb = \int_0^l h \circ \chi_W(r) \diff r.
\]

Let $\alpha \in [0,1]$. For any $W \in \cW^s$, we let $C^\alpha(W,\C ) $ denote the Banach space of complex-valued functions $W$ with H\"older exponent $\alpha$, equipped with the norm
\[
|h|_{C^\alpha(W,\bC)}=\sup_{z\in W} |h(z)|+\sup_{z,w\in W}\frac{|h(z)-h(w)|}{|z-w|^\alpha}.
\]
Such a set is a collection of local unstable manifolds that do not contain a singularity point.
From here onward all required definitions are as in~\cite[Section 2.2]{Demers18}.

We say that $\vf\in C^\alpha(\Xcomp,\bC)$ if it is $C^\alpha(W,\bC)$ for all $W\in\cW^s$.
Given $h \in C^1(\Xcomp,\C)$, define the \emph{weak norm} by
\begin{equation*}\label{eq-weaknorm}
\|h\|_{\cB_w}:=\sup_{W\in\cW^s}\;\sup_{|\phi|_{ C^1(W,\bC)}\leq 1 }  \int_W h\phi\, \diff\Leb.
\end{equation*}
Given $\alpha\in [0,1)$, define the \emph{strong stable norm} by
\begin{equation*}\label{eq-strongnormst}
\|h\|_s:=\sup_{W\in\cW^s}\;\sup_{|\phi|_{ C^\alpha(W,\bC)}\leq 1 }
 \int_W h\phi\, \diff\Leb.
\end{equation*}
For any two {\em aligned}\footnote{i.e., the one is obtained from the other by a translation in the unstable direction.} admissible leaves $W_1, W_2 \in \cW^s$ \emph{in the same atom of $\cP$}, let $d(W_1,W_2)$ denote the distance \emph{in the unstable direction} between $W_1$ and $W_2$. In other words, if $W_i = \{\chi_i(r) : r \in [0,l]\}$, then $d(W_1,W_2)$ is the length of the segment in the unstable direction connecting $\chi_1(r)$ to $\chi_2(r)$.

With the same notation as above, for two functions $\vf_i\in C^1(W_i,\bC)$, with $i=1,2$, we also define
\[
d_0(\vf_1, \vf_2)=\sup_{r \in [0,l]} |\vf_1 \circ \chi_r(x_1)-\vf_2 \circ \chi_r(x_2)|.
\]
Next define the \emph{strong unstable norm} by
\begin{equation*}\label{eq-strongnormunst}
\| h \|_u := \sup_{W_1, W_2\in\cW^s}\,
\sup_{|\vf_i|_{C^1}\leq 1,\, d_0(\vf_1,\vf_2)=0}\frac{1}{d(W_1, W_2)^{1-\alpha}}\left|\int_{W_1} h\vf_1\,  \diff\Leb  -\int_{W_2} h\vf_2\, \diff\Leb \right|.
\end{equation*}
Finally, the \emph{strong norm} is defined by $\|\vf\|_{\cB}=\|\vf\|_s+\|\vf\|_u$.
These norms are exactly those of \cite[Section 2.3]{Demers18}.

Define the weak space $\cB_w$ to be the completion of $C^1(\Xcomp)$ in the weak norm and define $\cB$ to be the completion of $C^1(\Xcomp)$ in the strong norm.

 \begin{lemma}~\cite[Lemma 2.4]{Demers18} (see also~\cite[Lemma 7.2]{LT16})\label{lem:emb} We have the following sequence of continuous, injective embeddings: $C^1(\Xcomp)\subset\cB\subset\cB_w\subset (C^1(\Xcomp))^*$. Moreover, the unit ball of $\cB$ is
 relatively compact in $\cB_w$.\end{lemma}

 \subsubsection{Well-definedness and boundedness of $\Lcomp$ on $\cB$ and $\cB_w$. Proof of Proposition~\ref{prop:op}(a)}

 Recall that $\psicomp$ is piecewise affine (so $\psicomp$ is $C^1(W)$, for any $W\in\cW^s$) and note that for any
 $\alpha\in [0,1]$, for any $W\in\cW^s$ and for
any $\vf\in C^\alpha(W,\bC)$, $\vf\circ \psicomp\in C^\alpha(\Xcomp,\bC)$.
Moreover, for any $n\ge 1$, $\psicomp^{-n}(W)$ consists of a union of leaves in $\cW^s$ and
the transfer operator $\Lcomp$ of $\psicomp$ is defined as
 \begin{align}
 \Lcomp h(\vf)= h(\vf\circ\psicomp), \quad \text{ for all } \vf\in C^\alpha(\cW^s) \text{ and } h\in(C^\alpha(\cW^s))^*.
 \end{align}
Recall that Lebesgue measure $\Leb$ is invariant for $\psicomp$.
 We identify $h$ with the measure $\diff \mu = h\, \diff\Leb$. Then
 $h\in C^1(\cW^s)\subset (C^1(\cW^s))^*$ and $\Lcomp h$ is associated with the measure having density
 \begin{align}\label{eq:trop}
 \Lcomp h(x)=\frac{h \circ \psicomp^{-1}(x)}{J_\psicomp(\psicomp^{-1}(x))}=h \circ \psicomp^{-1}(x),
 \end{align}
 where $J_\psicomp$ is the Jacobian of $\psicomp$ with respect to $\Leb$,
 which is equal to $1$ (since the contraction and expansion are the same).

 In general, it is not true that for systems with discontinuities, $\Lcomp(C^1(\Xcomp))\subset C^1(\Xcomp)$ and hence it is not obvious that $\Lcomp$
 is well defined on $\cB$: see, for instance,~\cite[Footnote 13]{Demers18}. However, in the current
 setup of $\psicomp$, similar to the first line of the proof of~\cite[Lemma 4.1]{Demers18},
 $\Lcomp h\in C^1(\cW^s)$ (since for any $W\in\cW^s$, $\psicomp^{-1}W$ is an exact union of leaves in $\cW^s$).
 Hence, $\Lcomp(C^1(\Xcomp))\subset C^1(\Xcomp)$ and \textbf{$\Lcomp$ is well defined on $\cB$}.

 Also, by~\cite[Lemma 4.1]{Demers18}, \textbf{$\Lcomp$ acts continuously on $\cB$ and $\cB_w$}
and the proof of~\cite[Theorem 2.5]{Demers18} (for baker type maps) goes word for word the same
in the setup of $\psicomp$. This yields

\begin{lemma}{~\cite[Theorem 2.5]{Demers18}}\label{lemma:spg} The operator $\Lcomp$ is quasi-compact as an operator on $\cB$.
That is, its spectral radius is $1$ and its essential
spectral radius is strictly less than $1$. Moreover,
$1$ is a simple eigenvalue, and all other eigenvalues have modulus strictly less than $1$.
\end{lemma}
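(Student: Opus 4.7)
The plan is to follow verbatim the strategy of \cite[Theorem 2.5]{Demers18}, relying on the fact that $\psicomp$ is piecewise affine with uniform expansion/contraction by factor $|\lambda|^{\pm 1}$, and that the only structural difference from a baker-type map is the type of singularities (cone points whose angle is not $2\pi$ in place of a discontinuity locus). Because admissible leaves $W \in \cW^s$ are stable segments whose interiors sit in the interior of an atom of $\cP^R$, and because $\psicomp^{-1}(W)$ decomposes into an exact union of elements of $\cW^s$, no preimage leaf ever crosses a singularity, and all geometric inputs needed by the arguments of \cite{Demers18} remain valid. The plan has three steps: (i) a Lasota--Yorke inequality, (ii) quasi-compactness via Hennion--Nussbaum, (iii) simplicity of the peripheral spectrum via mixing.

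First I would establish a Lasota--Yorke estimate of the form
\[
\|\Lcomp^n h\|_{\cB} \leq C\,\eta^n \|h\|_{\cB} + C_n \|h\|_{\cB_w}
\]
for some $\eta \in (0,1)$. Using~\eqref{eq:trop} and the change of variables formula (recall the Jacobian is $1$), one rewrites $\int_W (\Lcomp^n h)\,\vf \,\diff\Leb = \sum_i \int_{W_i} h\,(\vf\circ\psicomp^n)\,\diff\Leb$, where $\{W_i\} \subset \cW^s$ is the partition of $\psicomp^{-n}(W)$. The test function $\vf\circ\psicomp^n$ has its $C^\alpha$ seminorm contracted by $|\lambda|^{n\alpha}$, which controls $\|\Lcomp^n h\|_{s}$; the weak-norm bound $\|\Lcomp^n h\|_{\cB_w} \le C\|h\|_{\cB_w}$ follows analogously with $\alpha = 1$; and for $\|\Lcomp^n h\|_u$, one matches pairs of aligned leaves $W_1, W_2$ under $\psicomp^{-n}$, the distance $d(W_1,W_2)$ contracts by $|\lambda|^{n}$ in the unstable direction, yielding a factor of $|\lambda|^{n(1-\alpha)}$. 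Choosing $\alpha \in (0,1)$ gives $\eta = \max(|\lambda|^{\alpha}, |\lambda|^{1-\alpha}) < 1$. The normalization factor $|\lambda|^{-R}$ in the definition of the norms is compatible with these estimates because the stable length of atoms of $\cP^R$ is comparable to $|\lambda|^R$ by Garsia separation.

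Second, combining the Lasota--Yorke inequality with the compact embedding $\cB \hookrightarrow \cB_w$ from \Cref{lem:emb} and the Hennion--Nussbaum theorem, the essential spectral radius of $\Lcomp$ on $\cB$ is at most $\eta < 1$. The spectral radius is exactly $1$: invariance of $\Leb$ under $\psicomp$ gives $\int \Lcomp^n h\,\diff\Leb = \int h\,\diff\Leb$, forcing the spectral radius to be at least $1$, while the weak-norm estimate yields the matching upper bound. This establishes quasi-compactness. For the peripheral spectrum, I would invoke mixing of $\psicomp$ with respect to $\Leb$, which is a standard consequence of the pseudo-Anosov structure: any eigenfunction $h$ of $\Lcomp$ with unimodular eigenvalue lives in $\cB$, hence in $(C^1(\Xcomp))^*$; quasi-compactness lets one extract a finite-dimensional peripheral eigenspace, and a standard argument (e.g., via Koopman duality and Gelfand's formula) shows that any non-trivial peripheral eigenvalue or non-simple structure would contradict mixing of $(\psicomp, \Leb)$.

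The main obstacle is ensuring that the geometric setup genuinely goes through with cone-point singularities. Concretely, one must verify that for every $W \in \cW^s$ and every $n \ge 1$, every connected component of $\psicomp^{-n}(W)$ lies in $\cW^s$ (avoids singular points in its interior and stretches exactly across some atom of $\cP^R$). This relies on the Markov structure: since $\cP$ is a Markov partition built from separatrices through the cone points, iterates $\psicomp^{-1}\cP^R$ refine $\cP^R$ coherently, and admissibility is preserved under pullback. Once this combinatorial point is checked, the analytic content of \cite[Lemmas 4.2--4.4]{Demers18} applies without change, and the theorem follows.
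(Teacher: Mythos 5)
Your proposal reproduces the paper's approach: the authors state that the proof of \cite[Theorem~2.5]{Demers18} goes through word for word for $\psicomp$, relying on the Lasota--Yorke inequalities of \cite[Proposition~4.2]{Demers18}, the compact embedding $\cB\hookrightarrow\cB_w$ of \Cref{lem:emb}, and the observation that uniform expansion by $|\lambda|^{-1}$ together with the Markov structure ensures $\psicomp^{-n}(W)$ decomposes into exact unions of admissible leaves, with the $|\lambda|^{-R}$ normalization compatible because $\cP^R$-atoms have stable length comparable to $|\lambda|^R$ by Garsia separation. You have simply unpacked the same steps (Lasota--Yorke, Hennion--Nussbaum, mixing for the peripheral spectrum) that the paper delegates to \cite{Demers18}, and your geometric consistency check on preimages of admissible leaves is exactly the point the paper flags as the only place where the argument needs inspection.
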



 Lemmas~\ref{lem:emb} and~\ref{lemma:spg} is exactly the content of
 Proposition~\ref{prop:op}(a).

\subsubsection{Analyticity of the twisted transfer operator $\Lcomp_u f=\Lcomp(e^{iu F} f)$, $f\in\cB$.\\
Proof of Proposition~\ref{prop:op}(b)}
\label{sec:anal}

The Frobenius function $F:\Xcomp\to\Z$, $x \mapsto \xi \circ \psicover(x')-\xi(x')$
for $x' \in p^{-1}(x)$,
is not globally $C^1$, hence the simple argument of~\cite[Lemma 4.8]{Demers18}
cannot go through.
However, $F$ is constant on each element of the partition
$\cP^R \vee \psicomp^{-1} \cP^R$ (hence $C^\infty$ on each element of $\cP^R \vee \psicomp^{-1} \cP^R$). As a consequence,
the argument for the analyticity of the twisted transfer operator is a much simplified version of the argument
used in the proof
of~\cite[Lemma 3.9]{DPZ} (essentially a consequence of the arguments used in~\cite{DZ11,DZ13,  DZ14}).

\begin{lemma}\label{lemma:anal}
Let $u\in \R^d$, $f\in\cB$ and $m\ge 1$. Then $\frac{d^k}{du^k}\Lcomp_u f$ is a linear operator
on $\cB$  with operator norm of $O(\|F\|_\infty^k)$.
\end{lemma}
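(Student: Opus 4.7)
The plan is to compute the $k$-th formal derivative $\frac{d^k}{du^k}\Lcomp_u f = \Lcomp\bigl((iF)^k e^{iuF} f\bigr)$ and then bound the $\cB$-operator norm of $f \mapsto \Lcomp(M_k f)$, where the multiplier $M_k := (iF)^k e^{iuF}$ has sup norm at most $\|F\|_\infty^k$. Since $\Lcomp$ acts continuously on $\cB$ by \Cref{lemma:spg}, the task reduces to showing that this uniform pointwise bound on $M_k$ is transmitted through all three parts of the $\cB$-norm at the cost of only a multiplicative factor $\|F\|_\infty^k$.

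The key structural input is that $F$ is constant on each atom of the refined partition $\cP^R \vee \psicomp^{-1}\cP^R$. Given any admissible leaf $W \in \cW^s$, the preimage $\psicomp^{-1}W$ is a single stable segment contained in one atom of $\psicomp^{-1}\cP^R$ that decomposes into finitely many admissible sub-leaves $W_j \in \cW^s$, each lying in a single atom of $\cP^R \vee \psicomp^{-1}\cP^R$ on which $F$ takes a constant value $F_j$ with $|F_j| \le \|F\|_\infty$. Testing against $\phi \in C^1(W)$,
\[
\int_W \frac{d^k}{du^k}\Lcomp_u f \cdot \phi \, dm = \sum_j (iF_j)^k e^{iuF_j} \int_{W_j} f \cdot (\phi \circ \psicomp) \, dm,
\]
with prefactors of modulus at most $\|F\|_\infty^k$. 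The estimates for the weak and strong stable norms then reduce directly to the corresponding estimates for $\Lcomp$ from the Lasota--Yorke-type inequalities of \cite[Proposition 4.2]{Demers18}, up to the uniform multiplicative factor $\|F\|_\infty^k$.

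For the strong unstable norm, one compares two aligned admissible leaves $W_1, W_2 \in \cW^s$ in the same atom of $\cP^R$, pairs the sub-leaves $W_{1,j} \leftrightarrow W_{2,j}$ in their $\psicomp^{-1}$-decompositions (which share stable positions and are separated in the unstable direction by $\lambda\, d(W_1, W_2)$), and splits each paired summand into the $W_{1,j}$-prefactor times the strong unstable difference on the pair, controlled by $\|F\|_\infty^k \|f\|_u$, plus an error
$\bigl((iF_{1,j})^k e^{iuF_{1,j}} - (iF_{2,j})^k e^{iuF_{2,j}}\bigr)\int_{W_{2,j}} f \cdot (\vf_2 \circ \psicomp)\, dm$.
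The latter is nonzero only when the paired sub-leaves lie in distinct atoms of $\cP^R \vee \psicomp^{-1}\cP^R$. The main obstacle is exactly this case: showing the error is compatible with the Hölder factor $d(W_1, W_2)^{-(1-\alpha)}$ appearing in the strong unstable norm. The resolution uses that the jumps of $F$ occur along entire unstable boundaries of $\cP^R \vee \psicomp^{-1}\cP^R$, so the error can be estimated through the weak norm of $f$ and absorbed into the required Hölder modulus, as in the simplified analogue of \cite[Lemma 3.9]{DPZ} the paper refers to.
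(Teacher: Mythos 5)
Your proposal follows essentially the same route as the paper: compute $\frac{d^k}{du^k}\Lcomp_u f = \Lcomp\bigl((iF)^k e^{iuF} f\bigr)$, reduce to a multiplier bound on $\cB$, and exploit that $F$ (hence the multiplier) is constant on atoms of $\cP^R\vee\psicomp^{-1}\cP^R$. The paper delegates the multiplier estimate --- in particular the strong unstable norm part, which you correctly single out as the delicate step --- to simplified versions of \cite[Lemma 3.7]{DZ13} and \cite[Lemma 3.3]{DPZ}, whereas you sketch its proof directly along the same lines; otherwise the approaches coincide.
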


\begin{proof} Using~\eqref{eq:trop}, compute that
\begin{align*}
\frac{d^k}{du^k}\Lcomp_uf=i^k \Lcomp(F^k e^{i u F} f)= i^k(F^k e^{iu F} )\circ\psicomp^{-1}\Lcomp f.
\end{align*}
Since $F$ is locally constant and since each element of $\cP \vee \psicomp^{-1} \cP$ contains no singularities in its interior,
a simplified version\footnote{The (serious) simplification comes from the simple form of admissible
leaves and the fact that the Jacobian is constant.} of the argument used in~\cite[Lemma 3.7]{DZ13}
(see also~\cite[Lemma 3.3]{DPZ})
shows that for any $f\in\cB$, $f\, F\in\cB$ and that for some $C>0$,
\begin{align}\label{eq:inB}
\|f\, F\|_{\cB}\le C \|f\|_{\cB}\sup_{P_i\in\cP^R}\|F\|_{C^\alpha(P_i)}.
\end{align} Thus,
\begin{align*}
\left\|\frac{d^k}{du^k}\Lcomp_uf \right\|_{\cB}\le C\sup_{P_i\in\cP^R}\|(F^k e^{iu F} )\circ\psicomp^{-1}\|_{C^\alpha(P_i)}
\| f \|_{\cB}\le C\|F\|_\infty^k\|f\|_{\cB}.
\end{align*}
\end{proof}

\subsection{Spectrum of $\Lcomp_u$ and leading eigenvalue. Proof of \Cref{prop:op}(c)}

We already know (see Lemma~\ref{lemma:spg}) that $1$ is a simple isolated eigenvalue of $\Lcomp_0$.
Since $u\to\Lcomp_u$ is analytic (see Lemma~\ref{lemma:anal}),
there exists $\delta>0$ and a simple family of simple eigenvalues $\lambda_u$, analytic in $u\in (0,\delta)$ with $\lambda_0=1$.
Standard perturbation theory (see~\cite{ITM} and, for instance,~\cite[Section 2]{Gousurv}) ensures that for all $u\in(0,\delta)$,
\begin{align}\label{eq:decomp}
\Lcomp_u^n=\lambda_u^n\Pi_u+Q_u^n,
\end{align}
where $\Pi_u$ is the spectral projection onto the one-dimensional eigenspace associated to $\lambda_u$ with $\Pi_0 f=\int_{\Xcomp} f\, \diff\Leb$ and where $\|Q_u^n\|\le \theta^n$ for some $\theta\in (0,1)$,
and $Q_u \Pi_u = \Pi_u Q_u$.
By \Cref{lemma:anal} and standard perturbation theory (see~\cite{ITM}),
all the eigen-elements are again analytic. That is, $\Pi_u, Q_u$ are also analytic in $u\in (0,\delta)$.


\subsection{Spectrum of $\Lcomp_u$ for $u\in (\delta,\pi]$. Proof of \Cref{prop:op}(d)}

The following lemma gives the required control.

\begin{lemma}{~\cite[Lemma C.1]{DPZ}}\label{lem:aper}
Let $u\in (-\pi,\pi]$, $h\in\cB$ and $\eta\in\C$  be such that $\Lcomp_u h = \eta h$ in $\cB$ and $|\eta|\ge 1$. Then either
$h\equiv 0$ or $u\in 2\pi\Z$ and $h$ is $\Leb$-a.s.\ constant.
\end{lemma}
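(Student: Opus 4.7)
The plan is to derive a cohomological equation from the eigenvalue identity and then apply the non-coboundary hypothesis (H3). Since $\psicomp$ is area-preserving with Jacobian $\equiv 1$, the transfer operator is simply the composition operator $\Lcomp g = g \circ \psicomp^{-1}$ on smooth functions, and extends as such to $\cB$. The equation $\Lcomp_u h = \eta h$ then reads, after the substitution $x \mapsto \psicomp(x)$, as
\[
e^{iuF(x)} h(x) = \eta \, h(\psicomp(x)) \qquad \text{for $\Leb$-a.e.\ $x \in \Xcomp$.}
\]
Taking moduli yields $|h| = |h \circ \psicomp|$, so $|h|$ is $\psicomp$-invariant and, by ergodicity of $\psicomp$ (Lemma~\ref{lemma:spg} gives $1$ as a simple eigenvalue of $\Lcomp$), constant $\Leb$-a.e. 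If $h \equiv 0$ we are done; otherwise normalize $|h| = 1$. The modulus of the displayed equation then forces $|\eta| = 1$, and combining with $|\eta| \ge 1$ we write $\eta = e^{i\theta}$ and $h = e^{i\alpha}$ for some measurable $\alpha \colon \Xcomp \to \R/2\pi\Z$.

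Next, I would extract
\[
u\,F(x) \equiv \theta + \alpha(\psicomp(x)) - \alpha(x) \pmod{2\pi} \qquad \text{for $\Leb$-a.e.\ $x$,}
\]
i.e.\ $e^{iuF}$ is a multiplicative coboundary up to the constant $e^{i\theta}$. Iterating over $n$ steps and averaging together with the no-drift condition in (H3) first forces $\theta = 0$ (otherwise $n\theta$ would grow while $\alpha \circ \psicomp^n - \alpha$ stays bounded in $\R/2\pi\Z$). Then, since $F$ takes values in $\Z$, for $u \in (-\pi, \pi] \setminus \{0\}$ one checks that the resulting measurable equation $e^{iuF} = (e^{i\alpha}\circ\psicomp)/e^{i\alpha}$ is incompatible with the non-coboundary assumption (H3): via a standard bootstrap—measurable solutions of cohomological equations over pseudo-Anosov automorphisms can be upgraded to solutions regular on the atoms of the Markov partition, and the integrality of $F$ then lets one write $\alpha = 2\pi g/u \pmod{2\pi}$ for a $\Z$-valued function $g$ with $F = g \circ \psicomp - g$—which contradicts (H3). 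Hence $u \in 2\pi\Z$, and within $(-\pi,\pi]$ this means $u = 0$.

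Finally, for $u = 0$ the equation reduces to $\Lcomp h = \eta h$ with $|\eta| \ge 1$, and Lemma~\ref{lemma:spg} directly gives $\eta = 1$ with eigenspace spanned by the constants; thus $h$ is $\Leb$-a.s.\ constant. The main obstacle in this plan is the bootstrapping step: turning a merely measurable solution $\alpha$ of the multiplicative cohomological equation into a $\Z$-valued additive coboundary contradicting (H3). A secondary technical point is that elements of $\cB$ are a priori anisotropic distributions rather than functions, so the pointwise manipulations above (taking $|h|$, writing $h = e^{i\alpha}$) must be justified by exploiting the smoothing action of $\Lcomp$ along the unstable direction, which for peripheral eigenvalues of $\Lcomp_u$ produces an $L^\infty$ representative of the eigenfunction; this is essentially the content of the cited Lemma C.1 of \cite{DPZ}.
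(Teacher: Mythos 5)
The paper itself does not give a proof of this lemma: immediately after stating it, the authors write ``The proof of \Cref{lem:aper} goes word for word as~\cite[Proof of Lemma C.1]{DPZ}, except for differences in notations.'' So there is no in-paper argument to compare against in detail; both you and the authors are ultimately deferring to the same external source. With that caveat, your sketch does capture the broad strategy one expects in such an aperiodicity lemma: upgrade the $\cB$-eigenfunction to a genuine $L^\infty$ object, pass to the pointwise cohomological equation $e^{iuF} h = \eta\, h\circ\psicomp$, deduce $|h|$ constant and $|\eta|=1$, and then use the arithmetic structure of $F$ (integer-valued, locally constant, not a coboundary) to force $u\in 2\pi\Z$.

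There are, however, two substantive problems with the sketch. First, the step where you claim that iteration ``together with the no-drift condition in (H3) first forces $\theta=0$ (otherwise $n\theta$ would grow while $\alpha\circ\psicomp^n-\alpha$ stays bounded in $\R/2\pi\Z$)'' does not work as written: the cohomological identity only holds modulo $2\pi$, so $n\theta$ does not ``grow'' in $\R/2\pi\Z$, and likewise the left-hand side $uF_n(x)$ is only defined mod $2\pi$. The no-drift condition $\int F\,\diff\Leb=0$ gives no information about a mod-$2\pi$ congruence by averaging. Identifying $\eta=1$ is genuinely nontrivial and typically comes from periodic-orbit data (since $F_n(p)\in\Z$ at a period-$n$ point $p$ with $h(p)\neq 0$, one gets $\eta^n\in\{e^{iuk}:k\in\Z\}$) rather than from integrating the drift. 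Second, the two steps you correctly flag as ``the main obstacle'' and ``a secondary technical point''---upgrading a distributional eigenvector in $\cB$ to an $L^\infty$ representative, and bootstrapping a merely measurable transfer function $\alpha$ to one regular enough to contradict the non-coboundary assumption---are precisely the mathematical content of \cite[Lemma C.1]{DPZ}. As they stand, your proposal describes the shape of the argument but does not supply those steps, so it is a roadmap rather than a proof; this is essentially the same level of detail the paper itself offers via the citation, but you should not present the faulty $\theta=0$ reasoning as if it closed a gap.
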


The proof of \Cref{lem:aper} goes word for word as~\cite[Proof of Lemma C.1]{DPZ}, except for differences in notations.
The differences in the definitions of the norms used in Subsection~\ref{sec:banach} are irrelevant for this argument.
\Cref{lem:aper}  ensures that there exists $\delta_1\in (0,1)$ so that
$\|\Lcomp_u^n\|_{\cB}\le \delta_1^n$ for all $|u|>\delta$ and all $n\ge 1$.

\subsection{Proof of \Cref{prop:opllt}(a)}\label{sec:remainingproofs}
We first record a technical lemma that will be instrumental in the proof of \cref{prop:itema} of \Cref{prop:opllt}.
For integers $m, n \geq 0$ and $r_1, \dots, r_n \in \N$, define the operator
\begin{equation}\label{eq:G}
 G(y,m,n,r_1, \dots, r_n)
 = (y-1)^{-m} (y-\Lcomp_0)^{-1}
 \Lcomp_0(F^{\otimes r_1}) \otimes \cdots \otimes (y-\Lcomp_0)^{-1} \Lcomp_0(F^{\otimes r_n}) \otimes
 (y-\Lcomp_0)^{-1}.
\end{equation}
As $F$ takes values in $\Z^d$, the tensor products $F^{\otimes r_j}$ are vectors with $d^{r_j}$ components.
\begin{lemma}\label{lem:G}
For all $m, n \geq 0$, $r_1,\dots, r_n \in \N$ and real-valued $v \in \cB$,
the contour integral
$$
\int_{|y-1| = \delta}  G(y,m,n,r_1, \dots, r_n) v \, dy
$$
is a vector with purely imaginary entries.
\end{lemma}

\begin{proof}
 We will use induction on $m,n$, starting with $n=0$.
 If $m = n = 0$, then
 \begin{equation*}\label{eq:vec_im}
 \int_{|y-1| = \delta}  G(y,0,0) v \, dy
 = \int_{|y-1| = \delta}  (y-\Lcomp_0)^{-1} v \, dy = 2\pi i\, \Pi_0 v
\end{equation*}
is a vector with purely imaginary entries.

Now if the statement holds for $n = 0$ and all $0 \leq m' < m$, then
\begin{eqnarray*}
 \int_{|y-1| = \delta}  G(y,m,0) v \, dy
 &=& \int_{|y-1| = \delta}  (y-1)^{-m} (y-\Lcomp_0)^{-1} v \, dy \\
 &=& \int_{|y-1| = \delta}  (y-1)^{-m} (y-\Lcomp_0)^{-1} (v-\int_{\Xcomp} v \, \diff\Leb)  \, dy\\
&&  + \int_{|y-1| = \delta} (y-1)^{-m}  (y-\Lcomp_0)^{-1} \int_{\Xcomp} v \, \diff\Leb  \, dy
\\
&=& \int_{|y-1| = \delta}  (y-1)^{-m}
\left[ (y-\Lcomp_0)^{-1} - (1-\Lcomp_0)^{-1} \right]  (v-\int_{\Xcomp} v \, \diff\Leb)  \, dy \\
&& +\
\int_{|y-1| = \delta}  (y-1)^{-m}
(1-\Lcomp_0)^{-1}  (v-\int_{\Xcomp} v \, d\mu)  \, dy \\
&& +\ \int v_{\Xcomp} \, d\mu \int_{|y-1| = \delta} (y-1)^{-(m+1)}   \, dy.
 \end{eqnarray*}
The third integral 
vanishes because $m > 0$.
For the second integral, write $v_1 = (1-\Lcomp_0)^{-1}  (v-\int_{\Xcomp} v \, d\mu)$,
so $v_1 \in \cB$ is real (with $\int_{\Xcomp} v_1 \, \diff\Leb = 0$).
Hence this integral equals $v_1 \int_{|y-1| = \delta} (y-1)^{-m} \, dy = 2\pi i v_1$ for $m = 1$ and $0$  if $m > 0$.
For the first integral we use the resolvent identity:
\begin{align*}
 \int_{|y-1| = \delta} & (y-1)^{-m}
\left[ (y-\Lcomp_0)^{-1} - (1-\Lcomp_0)^{-1} \right]  \left(v-\int_{\Xcomp} v \, \diff\Leb \right)  \, dy\\
=& \ -\int_{|y-1| = \delta}  (y-1)^{1-m}
(y-\Lcomp_0)^{-1} (1-\Lcomp_0)^{-1}  (v-\int_{\Xcomp} v \, \diff\Leb)  \, dy \\
=&\ \lambda \int_{|y-1| = \delta}  (y-1)^{1-m}
(y-\Lcomp_0)^{-1} v_1  \, dy = -\int_{|y-1| = \delta} G(y,m-1,0) v_1  \, dy.
\end{align*}
This is purely imaginary by the induction hypothesis and since $m \geq 1$.

Now we continue with the induction step over $n$;
in this case $G(y,m,n,r_1, \dots, r_n)$ contains $n+1$
factors $(y-\Lcomp_0)^{-1}$, and therefore it has a pole at $1$ of order $\leq n+1$.
Our induction hypothesis is that $\int_{|y-1| = \delta} G(y,m',n', r_1, \dots, r_{n'}) v  \, dy$
is purely imaginary for every real-valued $v \in \cB$ when
$0 \leq n' < n$ and $m' \geq 0$ or when $n' = n$ and $n \leq m' < m$.
Then
\begin{align*}
 \int_{|y-1| = \delta}  G(y,m,n,r_1, \dots, r_n) v \, dy
 &=  \int_{|y-1| = \delta}  G(y,m,n,r_1, \dots, r_n) (v-\int_{\Xcomp} v \, \diff\Leb) \, dy\\
  + & \int_{|y-1| = \delta}  G(y,m,n-1,r_1, \dots, r_{n-1})
 \Lcomp_0(F^{\otimes r_n}) \otimes (y-\Lcomp)^{-1} \int_{\Xcomp} v \, \diff\Leb  \, dy.
\end{align*}
The second integral is equal to $\int_{\Xcomp} v \, \diff\Leb \cdot \int_{|y-1| = \delta}  G(y,m+1,n-1,r_1, \dots, r_{n-1}) \Lcomp_0(F^{\otimes r_n}) \, dy$
and thus purely imaginary by the induction hypothesis.
We rewrite the first integral  to
\begin{eqnarray*}
 && \int_{|y-1| = \delta}  G(y,m,n-1,r_1, \dots, r_{n-1}) \Lcomp_0(F^{\otimes r_n})
 \otimes  \left[ (y-\Lcomp_0)^{-1} - (1-\Lcomp_0)^{-1} \right] (v-\int_{\Xcomp} v \, \diff\Leb) \, dy \\
&& \qquad + \int_{|y-1| = \delta}  G(y,m,n-1,r_1, \dots, r_{n-1}) \Lcomp_0(F^{\otimes r_n})
 \otimes  (1-\Lcomp_0)^{-1} (v-\int_{\Xcomp} v \, \diff\Leb) \, dy.
  \end{eqnarray*}
With $v_2 =  \Lcomp_0(F^{\otimes r_n}) (1-\Lcomp)^{-1} (v-\int_{\Xcomp} v \, \diff\Leb)$,
the second term becomes
$\int_{|y-1| = \delta}  G(y,m,n-1,r_1, \dots, r_{n-1}) v_2 \, dy$, which is purely imaginary by induction.
The resolvent identity applied to the first term gives
\begin{align*}
-\int_{|y-1| = \delta} & G(y,m-1,n-1,r_1, \dots, r_{n-1}) \Lcomp_0(F^{\otimes r_n})
\otimes (y-\Lcomp_0)^{-1} (v-\int_{\Xcomp} v \, \diff\Leb) \, dy \\
&=\
-\int_{|y-1| = \delta}  G(y,m-1,n,r_1, \dots, r_n)  (v-\int_{\Xcomp} v \, \diff\Leb) \, dy.
\end{align*}
This is purely imaginary by the induction hypothesis. If, however, $m = n$,
then the integrand contains a factor $(y-1)^{n+1}$, which removes the pole (of order $\leq n+1$) of the remaining part of the integrand, and hence Cauchy's Theorem gives again that the integral vanishes.
This completes the induction and the entire proof.
\end{proof}

We can now complete

\begin{proof}[Proof of \Cref{prop:opllt}(a).]
Recall that  $\Pi_u = \frac{1}{2\pi i} \int_{|y-1|=\delta} (y-\Lcomp_u)^{-1} \, dy$ is the
eigenprojection w.r.t.\ the leading eigenvalue.
Clearly $\Pi_0v$ is real for a real $v \in \cB$.
Taking the $j$-th derivative w.r.t.\ $u$ and then evaluating at $u = 0$, gives $2\pi i^{j+1}$ times the contour integral of a linear combination of
 terms of the form \eqref{eq:G}.
 These integrals are all purely imaginary by \Cref{lem:G}, so the $j$-th derivative produces alternatingly real and purely imaginary outcomes. So, $\Pi_0^{(j)} v$ has only real entries if $j$ is even
and purely imaginary entries if $j$ is odd.

 It remains to look at the $j$-th derivatives $\left(\frac{\lambda}{A}\right)_0^{(j)}$ of $\frac{\lambda_u}{A_u}$ evaluated at $0$. We recall that $\lambda_u$ is the eigenvalue and $A_u=e^{-\langle \Sigma u,\Sigma u\rangle}$.

 Let $v_u=\frac{\Pi_u1}{\int \Pi_u1\, \diff\Leb}$ be the normalized eigenvector associated with $\lambda_u$,
i.e., $\int_{\Xcomp} v_u\, \diff\Leb=1$. Since $\Pi_u$ is analytic, so is $v_u$. $\Pi_0^{(j)} 1$ has only real entries if $j$ is even
and purely imaginary entries if $j$ is odd, the same applies to
$v_0^{(j)} v$ has only real entries if $j$ is even
and purely imaginary entries if $j$ is odd.

A simple calculation starting from $\int_{\Xcomp}\Lcomp_u v_u\, \diff\Leb =\lambda_u\int_{\Xcomp}v_u\,  \diff\Leb =\lambda_u$
shows that we can write
\begin{align*}
 1-\lambda_u &=\int_{\Xcomp}(1-e^{iuF})\, \diff\Leb +\int_{\Xcomp}(1-e^{iuF})(v_u-v_0)\,  \diff\Leb\\
 &=-\sum_{j=1}^\infty\frac{i^j u^{\otimes j}}{j!} * \E(F^{\otimes j}) +\int_{\Xcomp} \left(\sum_{j=1}^\infty\frac{i^j u^{\otimes j}}{j!}\, F^{\otimes j}\right) * \left(\sum_{j=1}^\infty\frac{i^j u^{\otimes j}}{j!}\, v_0^{(j)}\right)\, \diff\Leb.
 \end{align*}

 Using that $v_0^{(m)}$ has only real entries if $m$ is even
and purely imaginary entries if $m$ is odd,
we can see that the same applies to every term
of product of sums $\left(\sum_{j=1}^\infty\frac{i^j u^{\otimes j}}{j!}\, F^{\otimes j}\right) * \left(\sum_{j=1}^\infty\frac{i^j u^{\otimes j}}{j!}\, v_0^{(j)}\right)$.
Clearly, every $j$ term in the sum $\sum_{j=1}^\infty\frac{i^j u^{\otimes j}}{j!}\E(F^{\otimes j})$ has real entries if $j$ is even
or purely imaginary entries if $j$ is odd.
Thus, $\lambda_0^{(j)}$ has real entries if $j$ is even
or purely imaginary entries if $j$ is odd.

Finally, dividing $\lambda_u$ by
$A_u=e^{-\frac12 \langle \Sigma u,\Sigma u\rangle}$ makes no difference since $A_u$ is real.
%
%
\end{proof}

\subsection{Proof of \Cref{lemma:EKN}}
\label{sec:EKN}
Note that
\begin{align*}
 \left|\frac{1}{\lambda^{\KK}}\int_0^{\lambda^{\KK}T} E_{K,N} G\circ \phi_{r} (x_K) \, d r\right|=\left|\frac{1}{\lambda^{\KK}}\int_{\tilde W} E_{K,N} G \right|,
\end{align*}
where $\tilde W$ is the segment connecting $x_K$ and $\phi_{\lambda^{\KK} T} (x_K)$.

Recalling the definition of weak norm $\|h\|_{\cB_w}$ in subsection~\ref{subsec:ban}, we see that
\begin{align*}
 \left|\frac{1}{\lambda^{\KK}}\int_{\tilde W} E_{K,N} G\circ \phi_{r} (x_K) \, d r\right|\le \frac{|\tilde W|}{\lambda^{\KK}}\|E_{K,N} G\|_{\cB_w}.
\end{align*}
Recalling $C^1\subset\cB\subset \cB_w$, $\|E_{K,N} G\|_{\cB_w}\le \|E_{K,N}\|_{\cB} \|G\|_{C^1}$, and Lemma~\ref{lemma:EKN} follows.

\appendix
\section{On tensor products $\otimes$ and $*$}
\label{sec:tesnot}

We briefly review tensor products $\otimes$ and ``scalar'' product $*$.
If
$A$ is an $c \times d$ matrix, and $A'$ an $c' \times d'$-matrix, then
$A\otimes A'$ is an $cc' \times dd'$-matrix defined as
\begin{eqnarray*}
A\otimes A'
&=& \begin{pmatrix}
   a_{11} & \dots & a_{1d} \\
   \vdots & \ddots & \vdots \\
   a_{c1} & \dots & a_{cd}
  \end{pmatrix}
  \otimes
  \begin{pmatrix}
   a'_{11} & \dots & a'_{1d'} \\
   \vdots & \ddots & \vdots \\
   a'_{c'1} & \dots & a'_{c'd'}
  \end{pmatrix} \\[4mm]
  &=&
  \begin{pmatrix}
   a_{11} \begin{pmatrix}
   a'_{11} & \dots & a'_{1d'} \\
   \vdots & \ddots & \vdots \\
   a'_{c'1} & \dots &  a'_{c'd'}
  \end{pmatrix} & \dots &  a_{1d} \begin{pmatrix}
   a'_{11} & \dots & a'_{1d'} \\
   \vdots & \ddots & \vdots \\
   a'_{c'1} & \dots & a'_{c'd'}
  \end{pmatrix} \\
   \vdots & \ddots & \vdots \\
   a_{c1} \begin{pmatrix}
   a'_{11} & \dots & a'_{1d'} \\
   \vdots & \ddots & \vdots \\
   a'_{c'1} & \dots & a'_{c'd'}
  \end{pmatrix} & \dots & a_{cd} \begin{pmatrix}
   a'_{11} & \dots & a'_{1d'} \\
   \vdots & \ddots & \vdots \\
   a'_{c'1} & \dots & a'_{c'd'}
  \end{pmatrix}
  \end{pmatrix}.
\end{eqnarray*}
Note that this tensor product is not commutative: $A \otimes A'$ is only isomorphic but in general not equal to $A' \otimes A$.

Throughout, our vectors $u \in \C^d$ will be considered as column vectors, also if they appear as the arguments of (scalar) functions $f:\C^d \to \C$.
Now the $j$-fold tensor $u^{\otimes j}$ product of $u$ with itself is defined inductively:
$$
u^{\otimes 0} = 1 \in \C, \quad u^{\otimes 1} = u = \begin{pmatrix} u_1 \\ \vdots \\ u_d \end{pmatrix},
\quad  u^{\otimes 2} = u \otimes u =
\begin{pmatrix} u_1u_1 \\ u_1u_2\\ \vdots\\ u_1u_d \\ \vdots \\ \vdots \\ u_du_1 \\ u_du_2 \\ \vdots \\ u_du_d \end{pmatrix}
\quad \text{  and } \quad u^{\otimes j} = u \otimes u^{\otimes j-1}.
$$
Thus $u^{\otimes j}$ is a column vector with $d^j$ entries.

It goes similarly with derivatives of scalar functions $f:\C^d \to \C$:
$$
f^{(0)} = f, \qquad f' = f^{(1)} = \nabla f =
\begin{pmatrix} \frac{\partial f}{\partial u_1} \\ \vdots \\ \frac{\partial f}{\partial u_d} \end{pmatrix}, \quad
f^{(2)} =
\begin{pmatrix} \frac{\partial^2 f}{\partial u_1 \partial u_1} \\
 \frac{\partial^2 f}{\partial u_1 \partial u_2}  \\
 \vdots \\ \frac{\partial^2 f}{\partial u_1 \partial u_d}  \\ \vdots \\ \vdots \\
 \frac{\partial^2 f}{\partial u_d \partial u_1}  \\ \frac{\partial^2 f}{\partial u_d \partial u_2}  \\
 \vdots \\ \frac{\partial^2 f}{\partial u_d\partial u_d} \end{pmatrix}
 \quad \text{ and } \quad f^{(j)} = \begin{pmatrix} \frac{\partial}{\partial u_1} \\ \frac{\partial}{\partial u_2} \\ \vdots \\ \frac{1}{\partial u_d}\end{pmatrix}  \otimes f^{(j-1)}.
$$

Next we define the $*$-product $u * v$ on matrices of the same size as $A*A' = \sum_{i,j} a_{ij} a'_{ij}$, if $u, u' \in \C^d$ are column vectors, then $u * u = \sum_{i=1}^d u_i u'_i$ is the usual scalar product.
We can extend this to $c \times d$-matrix $A$
and $c' \times d'$-matrix $A'$ provided $cd = c'd'$ and $c'$ is a multiple of $c$ (or vice versa). For this, we divide $A$ into $d/d'$ $c \times d'$-matrices and stack them up to a single $c' \times d'$-matrix an then $*$-multiply with $A'$.
For example
\begin{align*}
\begin{pmatrix}
 a_{11} & a_{12} & a_{13} & a_{14} & a_{15} & a_{16} \\
 a_{21} & a_{22} & a_{23} & a_{24} & a_{25} & a_{26}
\end{pmatrix} *&
\begin{pmatrix}
 a'_{11} & a'_{12} \\
 a'_{21} & a'_{22} \\
 a'_{31} & a'_{32} \\
 a'_{41} & a'_{42} \\
 a'_{51} & a'_{52} \\
 a'_{61} & a'_{62}
\end{pmatrix}
=
\begin{pmatrix}
 a_{11} & a_{12} \\
 a_{21} & a_{22} \\
 a_{13} & a_{14} \\
 a_{23} & a_{24} \\
 a_{15} & a_{16} \\
 a_{26} & a_{26}
\end{pmatrix} *
\begin{pmatrix}
 a'_{11} & a'_{12} \\
 a'_{21} & a'_{22} \\
 a'_{31} & a'_{32} \\
 a'_{41} & a'_{42} \\
 a'_{51} & a'_{52} \\
 a'_{61} & a'_{62}
\end{pmatrix}  \\[2mm]
=\ & a_{11}a'_{11} + a_{12}a'_{12}
+ a_{21}a'_{21} + a_{22}a'_{22}
+ a_{13}a'_{31} + a_{14}a'_{32} \\[1mm]
& + a_{23}a'_{41} + a_{24}a'_{42}
+ a_{15}a'_{16} + a_{51}a'_{52}
+ a_{25}a'_{26} + a_{61}a'_{62}.
\end{align*}
For example, for a symmetric matrix $\Sigma = \binom{\sigma_{11} \ \sigma_{12}}{\sigma_{21} \ \sigma_{22}}$ and vector $u = \binom{u_1}{u_2}$, we get
\begin{align}\label{eq:Sigmau}
\Sigma^2 * u^{\otimes 2} =&
\begin{pmatrix}
 \sigma_{11}^2 + \sigma_{12}^2 & 2\sigma_{12}^2 \\
 2\sigma_{12}^2 & \sigma_{12}^2+\sigma_{22}^2 \\
\end{pmatrix}
*
\begin{pmatrix}
 u_1^2 \\ u_1u_2 \\ u_1 u_2 \\ u_2^2
\end{pmatrix} \nonumber \\
=\ &
(\sigma_{11}^2 + \sigma_{12}^2) u_1^2
+ (\sigma_{11} \sigma_{12} + 2\sigma_{12}^2 + \sigma_{12}\sigma_{22}) u_1u_2
+ (\sigma_{12}^2 + \sigma_{22}^2) u_2^2 \nonumber \\[2mm]
=\ & \begin{pmatrix}
 \sigma_{11}u_1 + \sigma_{21}u_2 \\
  \sigma_12u_1 + \sigma_{22}u_2
\end{pmatrix}
*
\begin{pmatrix}
 \sigma_{11}u_1 + \sigma_{21}u_2 \\
  \sigma_{12}u_1 + \sigma_{22}u_2
\end{pmatrix} = \langle \Sigma u , \Sigma u\rangle,
\end{align}
as used in~\Cref{sec:limitlaws}

We can extend $*$ even further to column vectors $u \in \C^d$ and $v \in \C^{d'}$,
where we assume that $d'$ is a multiple of $d$.
To compute $u * v$, divide $v$ into $d'/d$ blocks of height $d$, multiply the $k$-th entry of $u$ with the $k$-th block, and add up these blocks.
For example,
$$
\begin{pmatrix} u_1 \\ u_2  \end{pmatrix} *
\begin{pmatrix} v_1 \\ v_2 \\ v_3 \\ v_4 \\ v_5 \\ v_6  \end{pmatrix} =
u_1 \cdot \begin{pmatrix} v_1 \\ v_2 \\ v_3  \end{pmatrix}
+ u_2 \cdot \begin{pmatrix}  v_4 \\ v_5 \\ v_6  \end{pmatrix}
= \begin{pmatrix} u_1v_1 + u_2v_4 \\ u_1v_2 + u_2v_5 \\ u_1 v_3 + u_2v_6  \end{pmatrix},
$$
and if $d$ is a multiple of $d'$, we divide $u$ into $d/d'$ block, etc. Thus $*$ acts commutatively on column vectors.
Also, if $d=d'$, then $u * v = \sum_{i=1}^d u_i v_i$ is the usual scalar product.
In this specific case $d = d'$, Leibniz rule works:
$\nabla(u * v) = \nabla u * v + u * \nabla v$, but in general, the dimensions don't match.

However, we have the Taylor formula for a $C^\infty$ scalar function $f:\C^d \to \C$ and column vector $u \in \C^d$:
$$
f(u) = \sum_{j=0}^\infty \frac{1}{j!} f^{(j)}(0) * u^{\otimes j} = 1 \cdot f(0) \cdot 1 +
1 \cdot
\begin{pmatrix}
\frac{\partial f(0)}{\partial u_1} \\ \frac{\partial f(0)}{\partial u_2} \\ \vdots \\ \frac{\partial f(0)}{\partial u_d}
\end{pmatrix}
*
\begin{pmatrix}
u_1 \\ u_2 \\ \vdots \\ u_d
\end{pmatrix}
+
\frac12 \cdot
\begin{pmatrix} \frac{\partial^2 f(0)}{\partial u_1 \partial u_1} \\
 \frac{\partial^2 f(0)}{\partial u_1 \partial u_2}  \\
 \vdots \\ \frac{\partial^2 f(0)}{\partial u_1 \partial u_d}  \\ \vdots \\ \vdots \\
 \frac{\partial^2 f(0)}{\partial u_d \partial u_1}  \\ \frac{\partial^2 f(0)}{\partial u_d \partial u_2}  \\
 \vdots \\ \frac{\partial^2 f(0)}{\partial u_d\partial u_d} \end{pmatrix}
 *
 \begin{pmatrix} u_1u_1 \\ u_1u_2\\ \vdots\\ u_1u_d \\ \vdots \\ \vdots \\ u_du_1 \\ u_du_2 \\ \vdots \\ u_du_d \end{pmatrix}
 + \dots
$$

\section{Integrals used in the proof of mains results}
\label{sec:integrals}

In the proof of \Cref{thm:f}, we need the following lemma.

\begin{lemma}\label{lem:integrals}
	\begin{enumerate}[A.]
		\item \label{lem:integral_a}
		Assume $d=1$. Given $\sigma, L \in \R$ and $j \in \{0,1,2,\dots\}$, write
$$
I_j(\sigma, L) = \int_{\R} e^{-\frac{\sigma^2}{2} u^2} e^{iLu} u^j \, du.  $$
		Then
\[ I_0(\sigma, L) = \frac{\sqrt{2\pi}}{\sigma} e^{-\frac{ L^2}{2\sigma^2} },
\quad I_1 = \frac{i\sqrt{2\pi} L}{\sigma^3}  e^{-\frac{ L^2}{2\sigma^2} },
\ \text{ and } \
I_j(\sigma, L) = \frac{1}{\sigma^2} (i L I_{j-1} + (j-1) I_{j-2}).
\]

		\item \label{lem:integral_b}
		Assume $d=2$. Given a $2 \times 2$ covariance matrix $\Sigma^2$, $L \in \R^2$ and $j \in \{0,1,2,\dots\}$, write
$$
I_j(\Sigma, L) = \int_{\R^2} e^{-\frac{1}{2} \langle \Sigma u, \Sigma u\rangle} e^{i\langle L, u\rangle} u^{\otimes j} \, du,
$$
where $u^{\otimes j}$ is the $j$-fold tensor product of the vector $u = \binom{u_1}{u_2}$ with itself.
Then $\vec I_j$ is an alternatingly real and purely imaginary vector with $2^j$ components. Specifically:
$$
I_0(\Sigma, L) = \frac{2\pi}{\sigma_1 \sigma_2}
e^{- \frac12 \langle \Sigma^{-1} L, \Sigma^{-1} L \rangle  }
\quad \text{ and } \quad
I_1(\Sigma, L) =
\frac{2\pi i}{\sigma_1 \sigma_2} e^{- \frac12 \langle \Sigma^{-1} L, \Sigma^{-1} L \rangle }
A \begin{pmatrix}
 \frac{1}{\sigma_1^2} (A^*L)_1 \\[1mm]
 \frac{1}{\sigma_2^2} (A^*L)_2
     \end{pmatrix}
$$
where $\Sigma = A J A^{-1}$, $J = \binom{\sigma_1\ \\ 0}{\ 0\  \\ \ \sigma_2}$, is the diagonalization of $\Sigma$ with unitary matrix $A$.
Also
$$
I_2(\Sigma, L) = \frac{2\pi}{\sigma_1\sigma_2} e^{- \frac12 \langle  \Sigma^{-1} L,  \Sigma^{-1} L \rangle }
(Q(L \otimes L) + Q')
$$
with the quadratic form $Q$ and vector $Q' \in \R^{d^2}$ made explicit in the proof.
\end{enumerate}
\end{lemma}

\begin{proof}[Proof of \Cref{lem:integrals}.]
	{\bf \Cref{lem:integral_a}, d=1}. The integrals $I_j = I_j(\sigma,L)$ can be computed via integration by parts, namely
for $j \geq 1$ we have (taking into account that integrals over odd real or imaginary parts of the integrand vanish):
\begin{eqnarray*}
 I_j = I_j(\sigma,L) &=&  \int_{-\infty}^\infty u e^{-\frac{\sigma^2}{2} u^2} \, e^{ i L u} u^{j-1}\, du\\
 &=& \int_{-\infty}^\infty \frac{1}{\sigma^2} e^{-\frac{\sigma^2}{2} u^2} \, e^{ i L u}
 \left( i L u^{j-1} + (j-1)u^{j-2} \right) \, du
 = \frac{1}{\sigma^2} (i L I_{j-1} + (j-1) I_{j-2}).
\end{eqnarray*}
This recursion show that $I_j$ is alternatingly  real and purely imaginary.
We compute $I_0$ via a change of coordinates:
\begin{eqnarray}\label{eq:I0}
 I_0 &=& \int_{-\infty}^\infty e^{i L u - \frac{\sigma^2}{2} u^2} \, du = \int_{-\infty}^\infty e^{-( \frac{\sigma u}{\sqrt{2}} - \frac{i L}{\sqrt{2} \sigma} )^2} e^{-\frac{ L^2}{2\sigma^2} } \, du \nonumber\\
 &=& e^{-\frac{L^2}{2\sigma^2} } \frac{\sqrt{2}}{\sigma} \int_{-\infty}^\infty e^{-u^2} \, du
 = \frac{\sqrt{2\pi}}{\sigma} e^{-\frac{ L^2}{2\sigma^2} }.
\end{eqnarray}
Then the recursion gives
\begin{eqnarray*}
I_1  = \frac{i L}{\sigma^2} I_0 =
\frac{i\sqrt{2\pi} L}{\sigma^3}  e^{-\frac{ L^2}{2\sigma^2} },
&&
I_2 = \frac{1}{\sigma^2} (i L I_1 + I_0)
= \frac{\sigma^2-L^2}{\sigma^5} \sqrt{2\pi} e^{-\frac{ L^2}{2\sigma^2} },
\\
I_3 =  i L \frac{\sqrt{2\pi}}{\sigma^5} e^{-\frac{ L^2 }{2\sigma^2} } \left(3-\frac{L^2}{\sigma^2} \right),
&&
I_4 = \frac{\sqrt{2\pi}}{\sigma^5} e^{-\frac{ L^2 }{2 \sigma^2} }
\left( 3 - 6 \frac{L^2}{\sigma^2} +\frac{L^4}{\sigma^4}\right),
\end{eqnarray*}
and so on.

{\bf \Cref{lem:integral_b}, $d=2$}. Using diagonalization and the unitary change of coordinates $u = Av$ (so
$\langle \Sigma u, \Sigma u \rangle =
\langle AJA^{-1} u, AJA^{-1} u \rangle =
\langle JA^{-1} u, JA^{-1} u \rangle
= \sigma_1^2 v_1^2 + \sigma_2^2 v_2^2$ and
$\langle L, u \rangle = \langle A^* L, v\rangle$), we get
\begin{eqnarray*}
 I_0(\Sigma,L) &=& \int_{-\infty}^\infty e^{-\frac12 \sigma_1^2 v_1^2} e^{i (A^*L)_1 v_1} \, dv_1
 \int_{-\infty}^\infty e^{-\frac12 \sigma_1^2 v_2^2} e^{i (A^*L)_2 v_2} \, dv_2  \\
 &=& I_0(\sigma_1, (A^*L)_1) \cdot I_0(\sigma_2, (A^*L)_2)
= \frac{2\pi}{\sigma_1 \sigma_2}
e^{- \frac12 \left( \frac{1}{\sigma_1^2} (A^*L)_1^2 + \frac{1}{\sigma_2^2} (A^*L)_2^2 \right) }.
\end{eqnarray*}
Since
$\frac{1}{\sigma_1^2} (A^*L)_1^2 + \frac{1}{\sigma_2^2} (A^*L)_2^2 = \langle J^{-1}A^{-1}L, J^{-1}A^{-1}L \rangle = \langle A^{-1}\Sigma^{-1}L
A^{-1}\Sigma^{-1}L \rangle
= \langle \Sigma^{-1}L, \Sigma^{-1}L \rangle$,
the result follows.

Using the same change of coordinates, we get
\begin{eqnarray*}
 I_1(\Sigma,L) &=& \int_{-\infty}^\infty
 \int_{-\infty}^\infty
 e^{-\frac12 \sigma_1^2 v_1^2} e^{i (A^*L)_1 v_1}
  e^{-\frac12 \sigma_1^2 v_2^2} e^{i (A^*L)_1 v_2}  \cdot  A\binom{v_1}{v_2}  \, dv_2 \, dv_1 \\
 &=&
 A \begin{pmatrix}
 I_1(\sigma_1, (A^*L)_1) \cdot I_0(\sigma_2, (A^*L)_2) \\[1mm]
  I_0(\sigma_1, (A^*L)_1) \cdot I_1(\sigma_2, (A^*L)_2)
     \end{pmatrix}
\\
&=&
\frac{2\pi i}{\sigma_1 \sigma_2} e^{- \frac12 \left( \frac{1}{\sigma_1^2} (A^*L)_1^2 + \frac{1}{\sigma_2^2} (A^*L)_2^2 \right) }
A \begin{pmatrix}
 \frac{1}{\sigma_1^2} (A^*L)_1 \\[1mm]
 \frac{1}{\sigma_2^2} (A^*L)_2
     \end{pmatrix} =
\frac{2\pi i}{\sigma_1 \sigma_2} e^{- \frac12 \langle  \Sigma^{-1} L,  \Sigma^{-1} L \rangle }
A \begin{pmatrix}
 \frac{1}{\sigma_1^2} (A^*L)_1 \\[1mm]
 \frac{1}{\sigma_2^2} (A^*L)_2
     \end{pmatrix}.
\end{eqnarray*}
For $I_j(\Sigma,L)$, $j \geq 2$, the same methods works, but the computations are getting
increasingly lengthy. To explain a bit about $j=2$,
the change of coordinates now leads to the factor
\[
(Av) \otimes (Av) =
\begin{pmatrix}
 (A_{11})^2 v_1^2 + 2A_{11} A_{12} v_1v_2 + (A_{12})^2 v_2^2 \\[2mm]
 A_{11}A_{21} v_1^2 + (A_{11} A_{22} + A_{12} A_{21}) v_1v_2 + A_{12} A_{22} v_2^2 \\[2mm]
 A_{21}A_{11} v_1^2 + (A_{21} A_{12} + A_{22} A_{11}) v_1v_2 + A_{22} A_{12} v_2^2 \\[2mm]
 (A_{21})^2 v_1^2 + 2A_{21} A_{22} v_1v_2 + (A_{22})^2 v_2^2
\end{pmatrix}
\]
Multiplying this with $e^{-\frac12 \sigma_1^2 v_1^2} e^{i (A^*L)_1} e^{-\frac12 \sigma_1^2 v_2^2} e^{i (A^*L)_1 v_2}$
and integrating over $\R^2$ means replacing
$v_1^2$ by $I_2(\sigma_1, (A^*L)_1) I_0(\sigma_2, (A^*L)_2)
= \frac{\sigma_1^2 - (A^*L)_1^2}{\sigma_1^5\sigma_2}$, etc.
This way we get
$$
I_2(\Sigma,L)
= \frac{2\pi}{\sigma_1\sigma_2} e^{- \frac12 \langle  \Sigma^{-1} L,  \Sigma^{-1} L \rangle } (Q(L \otimes L) + Q')
$$
for
$$
Q(L \otimes L) + Q' =
\begin{pmatrix}
 (A_{11})^2 \frac{\sigma_1^2 - (A^*L)_1^2}{\sigma_1^4}
 - 2A_{11} A_{12} \frac{(A^*L)_1}{\sigma_1^2} \frac{(A^*L)_2}{\sigma_2^2}
 + (A_{12})^2 \frac{\sigma_2^2 - (A^*L)_2^2}{\sigma_2^4} \\[2mm]
 A_{11}A_{21} \frac{\sigma_1^2 - (A^*L)_1^2}{\sigma_1^4}
 - (A_{11} A_{22} + A_{12} A_{21}) \frac{(A^*L)_1}{\sigma_1^2} \frac{(A^*L)_2}{\sigma_2^2} + A_{12} A_{22} \frac{\sigma_2^2 - (A^*L)_2^2}{\sigma_2^4} \\[2mm]
 A_{21}A_{11} \frac{\sigma_1^2 - (A^*L)_1^2}{\sigma_1^4}
 - (A_{21} A_{12} + A_{22} A_{11}) \frac{(A^*L)_1}{\sigma_1^2} \frac{(A^*L)_2}{\sigma_2^2}
 + A_{22} A_{12} \frac{\sigma_2^2 - (A^*L)_2^2}{\sigma_2^4}\\[2mm]
 (A_{21})^2 \frac{\sigma_1^2 - (A^*L)_1^2}{\sigma_1^5}
 - 2A_{21} A_{22} \frac{(A^*L)_1}{\sigma_1^2} \frac{(A^*L)_2}{\sigma_2^2}
 + (A_{22})^2 \frac{\sigma_2^2 - (A^*L)_2^2}{\sigma_1^4}
\end{pmatrix}.
$$
In general, the terms in $I_j(\Sigma,L)$ are all scalars of the form $c I_a I_b$, where $c \in \R$ and $a+b = j$,
so these are alternatingly real and purely imaginary in $j$.
\end{proof}

\end{document}